\documentclass[reqno]{amsart}

\usepackage[normalem]{ulem}

\usepackage{amsmath,amssymb,color}
\usepackage{amsfonts, amscd, epsfig, amsmath, amssymb,enumerate, bbm}
\usepackage{graphicx}
\usepackage{graphics}
\usepackage{color}
\usepackage{mathrsfs}
\usepackage{todonotes}
\usepackage{soul}
\usepackage[french,english]{babel}


 \usepackage[usenames,dvipsnames]{pstricks}
 \usepackage{pstricks-add}
 \usepackage{epsfig}
 \usepackage{pst-grad} 
 \usepackage{pst-plot} 
 \usepackage[space]{grffile} 
 \usepackage{etoolbox} 
\usepackage[margin=3cm]{geometry}
 \makeatletter 
 \patchcmd\Gread@eps{\@inputcheck#1 }{\@inputcheck"#1"\relax}{}{}
 \makeatother

\newtheorem{theorem}{Theorem}[section]
\newtheorem{lemma}[theorem]{Lemma}
\newtheorem{proposition}[theorem]{Proposition}
\newtheorem{corollary}[theorem]{Corollary}
\newtheorem{remark}[theorem]{Remark}
\newtheorem{definition}{Definition}[section]

\usepackage{tikz}
\usetikzlibrary{backgrounds}
\usetikzlibrary{patterns,fadings}
\usetikzlibrary{arrows,decorations.pathmorphing}
\usetikzlibrary{decorations}
\usetikzlibrary{calc}
\usetikzlibrary{shapes.misc}

\usepackage{setspace}

\setstretch{1.2}

\definecolor{light-gray}{gray}{0.95}
\usepackage{float}
\usepackage[colorlinks=true,linkcolor=blue,citecolor=magenta]{hyperref}
\def\centerarc[#1](#2)(#3:#4:#5){\draw[#1] ($(#2)+({#5*cos(#3)},{#5*sin(#3)})$) arc (#3:#4:#5);}

\newcommand{\ccl}[1]{{\color{blue}#1}}

\newcommand{\vertiii}[1]{{\left\vert\kern-0.25ex\left\vert\kern-0.25ex\left\vert #1 
    \right\vert\kern-0.25ex\right\vert\kern-0.25ex\right\vert}}

\numberwithin{equation}{section}
\numberwithin{figure}{section}

\definecolor{MSBlue}{rgb}{.15,.0.35,.85}
\newcommand{\nota}[1]{{\color{MSBlue}#1}}

\newcommand{\mc}[1]{{\mathcal #1}}

\newcommand{\bb}[1]{{\mathbb #1}}

\newcommand{\<}{\langle}
\renewcommand{\>}{\rangle}

\renewcommand{\epsilon}{\varepsilon}

\newcommand{\R}{\mathbb R}

\newcommand{\Z}{\mathbb Z}
\newcommand{\N}{\mathbb N}
\renewcommand{\P}{\mathbb P}

\newcommand{\E}{\mathbb E}
\newcommand{\Q}{\mathbb Q}

\renewcommand{\bar}{\overline}

\newcommand{\abs}[1]{\;\left\vert\;#1 \;\right\vert\;}

\newcommand{\normm}[1]{{\left\vert\kern-0.1ex\left\vert\kern-0.1ex\left\vert\; #1 \; \right\vert\kern-0.1ex\right\vert\kern-0.1ex\right\vert}}    
\newcommand{\cro}[1]{\left[#1\right]}
\newcommand{\pa}[1]{\left(#1\right)}

\newcommand{\genzr}{\mathscr{L}^{\mbox{\tiny \textsc{zr}}}_N}
\newcommand{\genzrstar}{\mathscr{L}^{\mbox{\tiny \textsc{zr}},\star}_N}
\newcommand{\genzrh}{\widehat{\mathscr{L}}^{\mbox{\tiny \textsc{zr}}}_N}

\newcommand{\genex}{\mathscr{L}_N}

\newcommand{\sym}{\boldsymbol{\mathfrak{s}}}
\renewcommand{\leq}{\leqslant}
\renewcommand{\geq}{\geqslant}

\allowdisplaybreaks 

%
%

\title{Stationary fluctuations for the facilitated exclusion process}

\author{Cl\'ement Erignoux}
\address{Inria, Univ. Lille, CNRS, UMR 8524 - Laboratoire Paul Painlev\'e, F-59000 Lille}
\email{clement.erignoux@inria.fr}
\author{Linjie Zhao}
\address{School of Mathematics and Statistics, and Hubei Key Laboratory of Engineering Modeling and Scientific Computing, Huazhong University of Science and Technology, Wuhan 430074, China. }
\email{linjie\_zhao@hust.edu.cn}

%
%
%
\keywords{Facilitated exclusion process, zero-range process, stationary fluctuations, Burgers equation.}

\newenvironment{poliabstract}[1]
  {\begin{abstract}}
  {\end{abstract}}
%

\begin{document}

\maketitle

\selectlanguage{english}
\begin{poliabstract}{Abstract} 
We derive the stationary fluctuations for the Facilitated Exclusion Process (FEP) in one dimension in the symmetric, weakly asymmetric and asymmetric cases.  Our proof relies on the mapping between the FEP and the zero-range process, and extends the strategy in \cite{erignoux2022mapping}, where hydrodynamic limits were derived  for the FEP, to its stationary fluctuations. Our results thus exploit works on the zero-range process's fluctuations \cite{gonccalves2010equilibrium,gonccalves2015stochastic}, but we also provide a direct proof in the symmetric case, for which we derive a sharp estimate on the equivalence of ensembles for the FEP's stationary states.
\end{poliabstract}



\section{Introduction}

The facilitated exclusion process (FEP) was first introduced in \cite{RossiPastorVespignani00} in the physics community, indicating the existence of a new universality class of nonequilibrium phase transitions in the presence of a conserving field.  In the FEP, particles can jump if and only if their target neighboring site is empty (exclusion rule) and another neighboring site is occupied (the facilitated rule). Particles with an occupied neighboring site are referred to as \emph{active}. As a result of the facilitated rule, the FEP is ultimately absorbed in some frozen configuration (all particles are isolated) if the particle density is below a critical value $\rho_c$ depending on the dimension, and can stay active for unbounded times otherwise.  Over the past years, much progress has been made in dimension one, where the critical value is $\rho_c:=1/2$. In a series of papers \cite{blondel2020hydrodynamic,blondel2021stefan,erignoux2022mapping}, the authors and their collaborators have shown that the macroscopic behaviors of the FEP are described by parabolic, resp. hyperbolic, Stefan problems when the process is symmetric, resp. asymmetric. The above results correspond to law of large numbers for the empirical density of the process and are called hydrodynamic limits in the literature \cite{klscaling}.

A natural question that arises once the hydrodynamic limit of the FEP is established is to consider fluctuations for the process, which play the role of the central limit theorem for interacting particle systems. Very recently, Barraquand \emph{et al.\;}\cite{barraquand2023weakly} investigated this issue when the process is weakly asymmetric and starts from step-like initial distributions. The specific form of the initial distribution allows them to map the process into the simple exclusion process on the half-line with particle creation (and no annihilation) at the origin. Different from their setting, we let the process start from its stationary measure and prove central limit theorems for the density fluctuation fields in the following three cases: symmetric, weakly asymmetric and asymmetric.   Roughly speaking, we show in this article that
\begin{enumerate}[(i)]
	\item in the symmetric case,  the density fluctuation fields converge to the solution of a stochastic heat equation;
	\item in the weakly asymmetric case with weak asymmetry of order $N^{-1}$, where $N$ is the scaling parameter, the density fluctuation fields converge to the solution of a stochastic Burgers-like equation;
	\item in the asymmetric case, the density fluctuation fields translates along characteristics of the corresponding hydrodynamic equation up to time $N^{\gamma}$ for $\gamma < 4/3$.
\end{enumerate}
In the proof of the symmetric case, we obtain a sharp estimate on the equivalence of ensembles (see Proposition \ref{pro:equiv}) for the FEP, improving the bound in \cite{blondel2020hydrodynamic}, which is of independent interest. The latter is essential to derive the so-called Boltzmann-Gibbs principle (see Proposition \ref{thm:bg}), which plays an important role when considering fluctuations from hydrodynamic limits.
 
\medskip
 
The mathematical novelty involved in proving the above results and the techniques in this article are the following:
\begin{itemize}
	\item In \cite{erignoux2022mapping},  the authors together with  M. Simon proved hydrodynamic limits for the FEP and FZRP, both in the symmetric and asymmetric cases by using a classical mapping between the FEP and the facilitated zero-range process (FZRP). In the present article, we extend the mapping technique to investigate the fluctuations of FEP. We remark that when the FEP starts from its stationary measure, the FEP can be mapped to the classical rate one zero-range process (ZRP), which is not degenerate. Despite this,  we still need to give sharp and uniform estimates on the positions of empty sites in order to express the density fluctuation field of the FEP as that of the ZRP, see Section \ref{sec:mappingproof} for details. We also need new non-trivial identities (see.\,\eqref{eq:mappingdistributions}) between the two process's invariant states. Once this is done, the derivation of macroscopic fluctuations for the FEP follow from previous results on the ZRP  \cite{gonccalves2010equilibrium,gonccalves2015stochastic}. Moreover, we believe the techniques used in this article  should also work for non-stationary fluctuations of FEP, but we leave this question as future work.
	\item Over the last ten years, much efforts have been made to understand the weak Kardar-Parisi-Zhang (KPZ) universality conjecture, which states that for weakly asymmetric systems with weak asymmetry of order $N^{-1}$ and with only one conservation law, the density fluctuation fields converge to the solution of the stochastic Burgers equation. Since the seminal work of \cite{gonccalves2014nonlinear}, this has been proven to be true for a large class of models \cite{gonccalves2015stochastic,diehl2017kardar}. The second contribution of the article is to provide another example of a degenerate system that validates the weak KPZ universality conjecture.
\end{itemize} 
  
\subsection {Related references.}  The FEP has been widely explored in recent years after being introduced in \cite{RossiPastorVespignani00}. In the physics literature, its critical behavior, namely its critical density and  critical exponents have been investigated in different dimensions in \cite{BasuMohanty09,oliveira,lubeck}. In \cite{GabelKrapivskyRedner10} the authors found the phenomenon of jump continuity at the leading edge of rarefaction waves, which is quite different from asymmetric simple exclusion.  In the mathematics literature, the stationary states of the facilitated exclusion, either in the continuous or discrete time setting, have been studied in \cite{Ayyer2020StationarySO,Chen2019,2019JSMTE,Goldstein2020,Goldstein2022}. Limit theorems have also been proved in  \cite{BaikBarraquandCorwinToufic16} for the position of the rightmost particle starting from step initial condition. We underline that the results mentioned above are concentrated on dimension one, and few rigorous results are known in higher dimensions.

The mapping between exclusion processes and the zero-range processes is well known, and has been put to use in varied contexts. It is used for instance in \cite{Kipnis86} in order to prove a central limit theorem for a tagged particle in the asymmetric simple exclusion process; in \cite{FunakiSasada}, the weakly asymmetric zero-range process with a
stochastic reservoir at the boundary (associated with the dynamics of two-dimensional Young diagrams) is mapped to the weakly asymmetric
simple exclusion process on the full line without any boundary condition, for which the hydrodynamic limit is known. In \cite{ferrari1988non}, the authors used the mapping when considering non-equilibrium fluctuations for the rate one zero-range process; more precisely, they proved a non-equilibrium version of the Boltzmann-Gibbs principle of the zero-range process, from which the non-equilibrium fluctuations follow directly.  Compared with \cite{ferrari1988non} we use the mapping in the opposite direction and map the two processes at the level of density fluctuation fields.

\subsection{Outline of the paper.} In Section \ref{sec:notation and results} we start by introducing the facilitated exclusion process, recalling its basic properties and stationary distributions.  Then, we introduce the object we are interested in, namely the fluctuations field, in Subsection \ref{subsec:flucfield} and the limiting processes (depending on the strength of the asymmetry) in Definitions \ref{def:solSHE} and \ref{def:solSBE}. We state our main results, characterizing the macroscopic  stationary fluctuations of the FEP, in Theorems \ref{thm:weak_asymFEP} and \ref{thm:asymFEP}. In Section \ref{sec:mapping} we describe the mapping between the FEP and a constant rate zero-range process, which is one of the main ingredients to derive our main result. Section \ref{sec:mappingproof} is devoted to the proof of  Theorems \ref{thm:weak_asymFEP} and \ref{thm:asymFEP}, the main idea being to express the density fluctuation field of the FEP as that of the ZRP. In Section \ref{sec:proofEqui} we give a alternative proof in the symmetric case, where a sharp estimate on the equivalence of ensembles is obtained, see Proposition \ref{pro:equiv}. Some new and useful properties of the stationary measures of the FEP are given in Appendix \ref{sec:stationaryDist}. 

\subsection{General Notations.} To ease reading, we indicate \nota{in color} the new notations that are introduced inside of paragraphs. We are interested in the stationary fluctuations (under the stationary distribution $\nota{\pi_\rho}$ at density $\rho\in[0,1]$ fixed. Throughout, given a random variable $X$, we will denote by $\nota{\overline{X}=X-\E_{\pi_\rho}(X)}$ the corresponding centered variable. $\nota{\N:=\{0,1,2,\dots,\}}$ is the set of non-negative integers. 

Since we are working on two different processes, it is convenient to introduce distinct notations for these two processes and we summarize them below.
\begin{center}
	\begin{tabular}{|l|c|c|}
	\hline
		& \qquad  FEP \qquad  \quad         & \qquad  ZRP \qquad \quad         \\ 
		 \hline
		Microscopic space variable & $x$ & $y $ \\
		Configuration       & $\eta_x$       & $\omega_y$     \\ 
		Macroscopic space variable & $u$ & $v:=(1-\rho)u $ \\
		macroscopic density & $\rho (u)$ & $\alpha(v)$ \\
		Stationary distributions & $\pi_\rho$   & $\mu_\rho$   \\
		Distribution of the stationary trajectory          & $\mathbb{P}_ {\rho}^N$ & $\mathbf{P}_{\nu}^N$ \\ 
		Expectation w.r.t. the trajectory's distribution          & $\mathbb{E}^N_{\rho}$ & $\mathbf{E}_{\nu}^N$ \\ 
		\hline
	\end{tabular}
\end{center}
\smallskip
Note that the static distributions for the zero-range process is parametrized by the exclusion process's density $\rho$ rather than the zero-range density $\alpha$, but they could straightforwardly be expressed as distributions  of the $\alpha$ through equation \eqref{eq:defalpha} below. In the case of the trajectory's distribution, we parametrize by the initial state $\nu$ of the system.

\section{Notation and results}\label{sec:notation and results}

\subsection{Facilitated exclusion process}
 Let  $\nota{N\in \N}$ be the scaling parameter for our process.   The \emph{facilitated exclusion process on $\Z$} is a Markov process on the set of configurations $\nota{ \Sigma:=\{0,1\}^{\Z}}$.  A \emph{configuration} $\nota{\eta\in \Sigma}$ is sequences of $0$'s and $1$'s indexed by $\Z$, namely  $\eta_x=1$ if and only if  site $x \in \Z$ is occupied by a particle.
The infinitesimal generator ruling the evolution in time of this Markov process is given by $\genex$, which acts on local functions $f:\Sigma \to \R$ as 
\begin{equation}
\label{eq:DefLN}
\genex f(\eta):=\sum_{x\in \Z } \big( p_N c_{x,x+1}(\eta) + q_N c_{x+1,x} (\eta) \big) \big(f(\eta^{x,x+1})-f(\eta)\big) ,
\end{equation}
where 
\begin{equation}
\label{eq:jumprates}
p_N = \sym N^2 + N^{\gamma} \qquad\mbox{and}\qquad  q_N =  \sym N^2,
\end{equation}
where $\sym\in\{0,1\}$, $\gamma\in \R\cup\{-\infty\}$, and $\eta^{x,x'}$ denotes the configuration obtained from $\eta$ by swapping the values at sites $x$ and $x'$,
\[
\eta_z^{x,x'}=
\begin{cases}
\eta_{x'} & \mbox{ if } z=x,\\
\eta_x & \mbox{ if } z=x',\\
\eta_z & \mbox{ otherwise.}
\end{cases}
\]
The jump rates $c_{x,x'}(\eta)$ in \eqref{eq:DefLN} encode two dynamical constraints: 
\begin{enumerate}[(i)]
\item the \emph{exclusion rule},  which imposes no more than one particle at each site,
\item the \emph{facilitated rule}, a kinetic constraint which asks for a neighboring occupied site  in order for a particle to jump to the other neighboring empty site.
\end{enumerate}
More precisely, we define
\begin{equation} 
\label{eq:rate} 
c_{x,x+1}(\eta)=\eta_{x-1}\eta_x(1-\eta_{x+1}), \quad c_{x+1,x} (\eta) = (1-\eta_{x})\eta_{x+1}\eta_{x+2}.
\end{equation}

\medskip

In this article, we consider the following two cases:
\begin{itemize}
	\item the \emph{weakly asymmetric} case where $\sym=1$ and  $\gamma \leq 3/2$;
	\item the \emph{asymmetric case}, where $\sym=0$ and  $\gamma < 4/3$.
\end{itemize}
Note that letting $\sym=1$ and $\gamma=-\infty$, we obtain the \emph{symmetric} case, which is therefore considered as a special case of the weakly asymmetric one.

\medskip

Let us now recall some results from previous work \cite{blondel2020hydrodynamic,blondel2021stefan}: because of the kinetic constraint (ii), the facilitated exclusion process displays a \emph{phase transition} at the critical density $\rho_c=\frac12$. Indeed, pairs of neighboring empty sites cannot be created by the
dynamics, because to do so would require an isolated particle jumping out. Therefore, if at initial time the density of particles $\rho$ is bigger than $\rho_c$ (at least half of the sites are occupied), then particles will perform random jumps
in the microscopic system until there are no longer two neighboring empty sites. Similarly, if
initially $\rho<\rho_c$  (at least half of the sites are empty), particles will perform random jumps until all particles can no longer move because they are surrounded by empty sites. The particle configurations can therefore be divided into three categories: 
\begin{itemize}
\item the \emph{ergodic} configurations, where all empty sites are isolated, namely:

 $\eta$ is \emph{ergodic} if, for any $x\in \Z$, $\eta_x+\eta_{x+1}\geqslant 1$;
 We denote, for any connected set $B\subset \Z$, by 
 \begin{equation}
 \label{eq:DefEB}
 \mathcal{E}_B:=\left\{\eta\in \{0,1\}^B,\; \eta_x+\eta_{x+1}\geq 1\; \forall (x,x+1)\in B\right\}
 \end{equation} 
 the set of ergodic configurations on $B$.

\item the \emph{frozen} configurations, where all particles are isolated, namely: 

$\eta$ is \emph{frozen} if, for any $x\in\Z$, $\eta_x+\eta_{x+1}\leqslant 1$; Similarly, for a box $B\subset \Z$, we denote by 
 \[\mathcal{F}_B:=\left\{\eta\in \{0,1\}^B,\; \eta_x+\eta_{x+1}\leq  1\; \forall (x,x+1)\in B\right\}\]
 the set of ergodic configurations on $B$.

\item the \emph{transient} configurations, which are the remaining ones, those which are neither ergodic, nor frozen. They are called transient in \cite{blondel2020hydrodynamic,blondel2021stefan} because starting from a transient configuration, restricting the dynamics to a finite box $B$, the microscopic process will locally evolve towards either the ergodic or frozen components on $B$ in an a.s. finite number of jumps. 
\end{itemize} 
\noindent Note in particular that  the alternated configurations, where each particle is surrounded by empty sites and vice-versa, are critical, since their density is exactly $\frac12$, and it is convenient for them to be considered both frozen \emph{and} ergodic:  they are indeed frozen (no particle is allowed jump in them), and because they have probability non-zero under the grand canonical distribution of the process, whose support we refer to as the \emph{ergodic component}, it is natural to see them as ergodic as well.

\medskip

As a consequence, the  infinite volume invariant measures of the facilitated process are not independent products of homogeneous Bernoulli
measures (as in the standard Simple Exclusion Process for instance). Instead, they are given for $\rho>\frac12$, $\ell\geq 1$, by the measures
\begin{equation}
\label{pirho2}
\pi_\rho\Big( \eta_{|\Lambda_\ell}={\sigma}\Big)={\bf{1}}_{\{\sigma \in \mathcal{E}_\ell\}}
(1-\rho)\left(\frac{1-\rho}{\rho}\right)^{\ell-1-p}\left(\frac{2\rho-1}{\rho}\right)^{2p-\ell+1-{\sigma(1)-\sigma(\ell)}},
\end{equation}
where $\nota{\Lambda_\ell := \{1,\dots,\ell\}}$ for $\ell \geq 1$,  $\nota{\mathcal{E}_\ell:=\mathcal{E}_{\Lambda_\ell}}$ denotes the set of configurations which are ergodic on $\Lambda_\ell$, and $\nota{p = p(\ell,\sigma) := \sum_{x \in \Lambda_\ell} \sigma_x}$ is the number of particles in $\Lambda_\ell$. On can think of these measures as product Bernoulli distributions conditioned to being in the ergodic component. Furthermore,  they are translation invariant, and one can easily check that under $\pi_\rho$, we have $\E_{\pi_\rho}(\eta_0)=\rho$. The formula above, however, is not always very convenient to compute local function's expectations. For this reason, we describe below in \eqref{eq:mappingdistributions} an alternative construction for $\pi_\rho.$

\medskip

Consider as initial distribution the infinite volume stationary measure $\pi_\rho$, and the process $\eta(t)$ started from $\pi_\rho$ and with generator $\genex$ on $\mathcal{E}_{\Z}$. We denote by $\nota{\P_{\rho}^N}$ its distribution, and by $\nota{\E_{\rho}^N}$ the corresponding expectation.

\subsection{Stationary fluctuations field}\label{subsec:flucfield} Denote by $ \nota{\mathcal{S}:=\mathcal{S}(\R)}$ the Schwartz space of functions with fast decaying derivatives on $\R$, and by $\nota{\mathcal{S}'}$ its dual space, namely the space of tempered distributions. Further denote by  $\nota{D([0,T],\mathcal{S}')}$ the set of c\`adl\`ag trajectories on $\mathcal{S}'$. We now define the density fluctuation field  trajectory $(\mc{Y}^N_t)_{t\geq 0}\in D([0,T],\mathcal{S}')$ of the process $\eta$,  acting on smooth compactly supported functions $G\in \mathcal{S}$  as
\begin{equation}
\label{eq:YNt}
\mc{Y}^N_t (G) = \frac{1}{\sqrt{N}} \sum_{x \in \Z} \bar{\eta}_x (t) G \big(\tfrac{x}{N} - t v N^{\gamma -1}\big),
\end{equation}
where $\nota{\bar{\eta}_x (t) = \eta_x (t) - \rho}$ and $\nota{v :=v(\rho)}$ is the average macroscopic speed of asymmetric particles in the stationary state, namely  
	\begin{equation}
	\label{eq:defv}
	v(\rho) :=  \frac{d}{d \rho}\E_{\pi_\rho}(\eta_{x-1}\eta_x(1-\eta_{x+1}))= \frac{d}{d \rho}\frac{(1-\rho)(2\rho-1)}{\rho} = \frac{1-2\rho^2}{\rho^2}.
	\end{equation}
Note that because of the asymmetry, mass moves rightwards at a velocity of order $O( N^{\gamma -1})$, which is the reason for shifting in time the test function by $vtN^{\gamma-1}$. 
In what follows, we will use the notation 
\begin{equation}
\label{eq:defvN}
v_N=v N^{\gamma-1},
\end{equation}
for the macroscopic velocity of particles in the system in its stationary state. 

\medskip

We denote by $\nota{\Q_{\rho}^N:=\P_{\rho}^N\circ (\mc{Y}^N)^{-1}}$ the pushforward of $\P_{\rho}^N$ through the mapping 
\begin{equation*}
\mc{Y}^N:\{\eta(t),\; t\geq 0\}\mapsto \{ \mc{Y}^N_t,\;t\geq 0\},
\end{equation*}
that is, $\Q_{\rho}^N$ is the distribution of the stationary fluctuation field $\mc{Y}^N_t$ when $\eta$ is stationary state $\P_\rho^N$. In the asymmetric and weakly asymmetric cases  mentioned above, we are now ready to define the limiting processes and  introduce our main results.

\subsection{Diffusion coefficient and compressibility}
We start by introducing the relevant macroscopic quantities to appear in the macroscopic fluctuation field of the FEP. The hydrodynamic behavior of the symmetric FEP (see \cite{blondel2021stefan}) is characterized by the Stefan problem 
\[\partial_t \rho=\partial_u^2 \big\{ a(\rho) {\bf 1}_{\{\rho\geq 1/2\}}\big\}=\partial_u \big\{ D(\rho) {\bf 1}_{\{\rho\geq 1/2\}} \partial_u \rho\big\},\]
where $a(\rho)$ average density of active particles under $\pi_\rho$ and $D(\rho)$ the diffusion coefficient,
\begin{equation}
\label{eq:arho}
a(\rho):=\pi_\rho(\eta_x=1\mid \eta_{x-1}=1)=\frac{2\rho-1}{\rho}\qquad  \mbox{ and }\qquad D(\rho)=a'(\rho)=\frac{1}{\rho^2}.
\end{equation}
We further define  the conductivity 
\begin{equation}
\label{eq:Defsigmarho}
\sigma(\rho):=\pi_\rho(c_{0,1})=\frac{(2\rho-1)(1-\rho)}{\rho}.
\end{equation}
and the compressibility as
\begin{equation}
\label{eq:Defchirho}
\chi(\rho):=\sum_{x\in \Z}Cov_{\pi_\rho}(\eta_0,\eta_x)=\rho(1-\rho)(2\rho-1).
\end{equation}
Identities \eqref{eq:arho} and \eqref{eq:Defsigmarho} are direct applications of the explicit formula \eqref{pirho2}. We prove in Appendix \ref{sec:comp} this explicit formula \eqref{eq:Defchirho}  for the compressibility. Note that, as expected, the Einstein (fluctuation-dissipation) relation $\sigma=D\chi$ holds.

\subsection{Stationary fluctuations} In order to introduce our main results, we start by choosing $\sym=1$ and $\gamma \leq 3/2$ in \eqref{eq:jumprates} to consider the weakly asymmetric case.  We denote by $K_t(u,v)$ the one-dimensional heat kernel 
\begin{equation}
\label{eq:DefK}
K_t(u,v):=\frac{1}{\sqrt{4\pi t}}e^{\frac{(u-v)^2}{4t}},
\end{equation}
we now introduce the notion of solution to the stochastic heat equation for the case $\gamma<3/2$.
\begin{definition}[Solution to the stochastic heat equation]
\label{def:solSHE}
We say that $Y:=(Y_t)_{t\geq 0}$ taking values a.s. in $C([0,T], \mathcal{S}')$ is a stationary solution to the stochastic heat equation
\begin{equation}
\label{eq:DefSHE}
\partial_t Y_t = D(\rho)  \partial_u ^2 Y_t  + \sqrt{2\sigma(\rho)}\partial_u \dot{ \mathcal{W}}_t
\end{equation}
if it is a stationary generalized Ornstein-Uhlenbeck process on $\mathcal{S}'$ with mean $0$ and covariance given by
\[\E(Y_t(G)Y_s(H))=\chi(\rho) \int_{\R \times \R} G(u) K_{(t-s)D(\rho)}(u,v) H(v) du dv=\chi(\rho) \langle T_{t-s}G,H\rangle,\]
for $G,H\in \mathcal{S}$ and $s\leq t$, where $T_t$ is the semi-group associated with the self-adjoint operator $D(\rho) \partial_u^2 $ and $\langle\cdot ,\cdot\rangle$ is the $L^2$ inner product on $\R$.
\end{definition}

We will use the following result, which states that solutions to the stochastic heat equation are solution to a martingale problem.
\begin{proposition}[Characterization of solutions to \eqref{eq:DefSHE}]
\label{prop:carsolSHE}
Fix a stochastic process $Y$ taking values a.s. in $C([0,T], \mathcal{S}')$, and assume that for any $G \in \mathcal{S}$,
\[M_t (G) := Y_t (G) - Y_0 (G) -  D(\rho) \int_0^t Y_s (\partial_u^2 G) ds\]
and 
\[N_t(G) :=\big[M_t (G)\big]^2 - 2 t   \sigma(\rho) \|\partial_u G\|_{L^2 (\R)}^2\]
	are both integrable martingales w.r.t. $Y$'s natural filtration, and that for any $t\geq 0$, 
	\begin{equation}
	\label{eq:fixedtimecovariance}
	\E(Y_t(G)Y_t(H))=\chi(\rho) \langle G,H\rangle.
	\end{equation}
	Then, $Y$ is a stationary solution to the stochastic heat equation in the sense of Definition \ref{def:solSHE}. 
\end{proposition}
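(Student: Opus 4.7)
The plan is to use a time-dependent test function built from the semigroup $T_r$ of $D(\rho)\partial_u^2$ to reduce the claimed two-time covariance in Definition \ref{def:solSHE} to the single-time identity \eqref{eq:fixedtimecovariance}. Since $Y\in C([0,T],\mathcal{S}')$, each $M_\cdot(G)$ is a continuous martingale, and the hypothesis that $N_\cdot(G)$ is a martingale identifies its bracket as the deterministic function $\langle M(G)\rangle_t = 2t\sigma(\rho)\|\partial_u G\|_{L^2}^2$. L\'evy's characterization therefore shows that $M_\cdot(G)$ is a Brownian motion scaled by $\sqrt{2\sigma(\rho)}\|\partial_u G\|_{L^2}$; polarizing via $[M(G+H)]_t-[M(G)]_t-[M(H)]_t$ yields cross-brackets $[M(G),M(H)]_t = 2t\sigma(\rho)\langle\partial_u G,\partial_u H\rangle$, so the whole family $\{M_t(G):G\in\mathcal{S},\,t\in[0,T]\}$ is jointly Gaussian.

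Fix $s\leq t$ and $G\in\mathcal{S}$, and set $\Psi_r := T_{t-r}G$ for $r\in[s,t]$, which satisfies $\partial_r\Psi_r = -D(\rho)\partial_u^2\Psi_r$. Approximating $r\mapsto\Psi_r$ by a piecewise-constant-in-$r$ Schwartz test function, using the martingale property of $M_\cdot(\phi)$ for each fixed $\phi$, and passing to the limit via the smoothness of $r\mapsto T_r G$ in $\mathcal{S}$, one derives the It\^o-type identity
\begin{equation*}
Y_t(G) - Y_s(T_{t-s}G) \;=\; \int_s^t dM_r(T_{t-r}G).
\end{equation*}
The right-hand side is, by the previous paragraph, a mean-zero Gaussian variable with deterministic variance $\int_s^t 2\sigma(\rho)\|\partial_u T_{t-r}G\|_{L^2}^2\,dr$, measurable with respect to the $\sigma$-algebra generated by the $M$-increments on $[s,t]$; in particular its $\mathcal{F}_s$-conditional mean is zero.

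Multiplying by $Y_s(H)$, taking expectation, and invoking \eqref{eq:fixedtimecovariance} for the pair $(T_{t-s}G,H)$ at the single time $s$, we conclude
\begin{equation*}
\E(Y_t(G)Y_s(H)) \;=\; \E(Y_s(T_{t-s}G)\,Y_s(H)) \;=\; \chi(\rho)\langle T_{t-s}G,H\rangle,
\end{equation*}
which, via the heat-kernel representation $T_r G(u) = \int K_{D(\rho)r}(u,v)G(v)\,dv$, matches the covariance required by Definition \ref{def:solSHE}. Joint Gaussianity of the finite-dimensional marginals $(Y_{t_k}(G_k))_k$ then follows by iterating the decomposition over a time grid and combining the joint Gaussianity of the $M$-increments with the Gaussian fixed-time law of $Y_0$ (obtained from the Dol\'eans-Dade exponential $\exp(\lambda M_t(G)-\tfrac{\lambda^2}{2}\langle M(G)\rangle_t)$, which is automatically a martingale once the bracket is deterministic, together with stationarity). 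The main obstacle is the rigorous justification of the It\^o identity for the time-dependent test function: controlling the Riemann-sum approximation $\sum_k[M_{r_{k+1}}(T_{t-r_k}G)-M_{r_k}(T_{t-r_k}G)]$ in $L^2$ and identifying its limit with $Y_t(G)-Y_s(T_{t-s}G)$ hinges on uniform Schwartz-seminorm bounds on $r\mapsto T_r G$, which are standard from the smoothing properties of the heat semigroup on $\mathcal{S}$.
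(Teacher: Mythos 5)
The paper never actually proves Proposition \ref{prop:carsolSHE}: it is stated without proof and used as the classical Holley--Stroock / Kipnis--Landim characterization of generalized Ornstein--Uhlenbeck processes. Your argument follows exactly that classical route, and the part that identifies the two-time covariance is correct: the deterministic bracket forces each $M_\cdot(G)$ to be a Gaussian martingale by L\'evy's theorem (joint Gaussianity of the family coming from linearity of $G\mapsto M_t(G)$ and the vector-valued L\'evy characterization), the time-dependent test function $T_{t-r}G$ cancels the drift and yields $Y_t(G)=Y_s(T_{t-s}G)+\int_s^t dM_r(T_{t-r}G)$, and orthogonality of the martingale increment to $\mathcal{F}_s$ combined with \eqref{eq:fixedtimecovariance} applied to the pair $(T_{t-s}G,H)$ gives $\E(Y_t(G)Y_s(H))=\chi(\rho)\langle T_{t-s}G,H\rangle$. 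The Riemann-sum justification you flag as the main obstacle is indeed the only technical step, and it goes through using the uniform bound $\E[Y_r(\phi)^2]=\chi(\rho)\|\phi\|_{L^2(\R)}^2$ supplied by \eqref{eq:fixedtimecovariance} together with the smoothness of $r\mapsto T_rG$ in $\mathcal{S}$.

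The genuine gap is in your last step, the joint Gaussianity of the marginals $(Y_{t_k}(G_k))_k$. The Dol\'eans--Dade exponential only shows that the martingales $M_t(G)$ and their increments are Gaussian; it says nothing about $Y_0$. Your own decomposition writes $Y_t(G)=Y_0(T_tG)+(\text{Gaussian increment})$, so Gaussianity of the process reduces to Gaussianity of $Y_0$, and the stated hypotheses do not deliver this: if $Z_0$ is any non-Gaussian mean-zero field with covariance $\chi(\rho)\langle G,H\rangle$ and one sets $Y_t:=T_t^\ast Z_0+\int_0^t T_{t-r}^\ast\sqrt{2\sigma(\rho)}\,\partial_u dW_r$, then $M_\cdot(G)$ and $N_\cdot(G)$ are martingales and, by the Einstein relation $\sigma=D\chi$, \eqref{eq:fixedtimecovariance} holds at every time $t$, yet no $Y_t$ is Gaussian. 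Appealing to ``stationarity'' at this point is circular, since stationarity of the process is part of the conclusion, and sending $t\to\infty$ to kill the $Y_0(T_tG)$ term is unavailable on a finite horizon. The fix is to add Gaussianity of the fixed-time marginal (equivalently of $Y_0$) to the hypotheses, as in the classical statement --- an assumption that is automatic in the paper's application, where $Y_0$ arises as the CLT limit of the fluctuation field under $\pi_\rho$. With that amendment your proof is complete; without it, the covariance computation is correct but the Gaussianity claim does not follow from the argument you give.
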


In the case $\gamma = 3/2$, a non-linear contribution coming from the asymmetric jumps appears in the fluctuation regime. To define the proper limiting equation, we need to introduce further definitions. First, define 
\[\iota (u) = (1/2) \mathbf{1}_{[-1,1]} (u),\]
and let $\varphi\in \mathcal{S}$ be a mollifier on $\R$, namely a non-negative compactly supported function such that $\int_\R\varphi(u) du =1 $. We then define \[\varphi_\varepsilon(u)=\varepsilon^{-1} \varphi(u/\varepsilon) \qquad \mbox{ and } \qquad \iota_\varepsilon (u) = \varepsilon^{-1} \iota(u/\varepsilon).\]
We now approximate the dirac measure in $u$ by the smooth convolution $\delta_{u,\varepsilon}=\varphi_{\varepsilon^3}*\iota_\varepsilon(\cdot-u)\in \mathcal{S}$, straightforward computations show that 
\[\|\delta_{0,\varepsilon}\|_{L^2 (\R)}^2 \leq \varepsilon^{-1} \quad \text{and} \quad \lim_{\varepsilon \rightarrow 0} \varepsilon^{-1/2} \|\delta_{0,\varepsilon} - \iota_\varepsilon\|_{L^2 (\R)} = 0.\]
We now introduce a necessary $L^2$ condition in the case where $\gamma=3/2$.
\begin{definition}[$L^2$ Energy condition]
\label{def:L2EC}
Fix a process $Y$ taking values a.s. in $C([0,T], \mathcal{S}')$, and for $0 \leq s \leq t \leq T$, we define $A_{s,t}^\varepsilon\in \mathcal{S}'$ by
\[A_{s,t}^\varepsilon (G) = \int_s^t \int_\R  Y_{s'} (\delta_{u,\varepsilon})^2 \partial_u G (u) du\,ds'.\]
We say that $Y$ \emph{satisfies the $L^2$ energy condition} if for any $G \in \mathcal{S} (\R)$, $A_{s,t}^\varepsilon (G) $ is Cauchy in $L^2$ as $\varepsilon \rightarrow 0$, and the limit in $L^2$ does not depend on the mollifier $\varphi$.
We then denote by $A_{s,t}=A_{s,t}[Y]$ the limit of $A_{s,t}^\varepsilon$.
\end{definition}
Note that we do not know a priori whether $A_{0,t}$ is time-differentiable, therefore we do not give an analogous definition to \ref{def:solSHE} for solutions to the stochastic Burgers equation. Instead, we directly use the characterization of solutions in terms of martingales, analogous to Proposition \ref{prop:carsolSHE}.

\begin{definition}[Solution to the stochastic Burgers equation]
\label{def:solSBE}
We say that a stochastic process $Y$ taking values a.s. in $C([0,T], \mathcal{S}')$ is a stationary solution to the stochastic Burgers equation 
\begin{equation}\label{sbe_zrp}
	\partial_t Y_t = D(\rho)  \partial_u^2 Y_t  + \frac{1 }{2} D'(\rho)\partial_u Y_t^2+ \sqrt{2 \sigma(\rho)} \partial_u \dot{ \mathcal{W}}_t.
\end{equation}
if
\begin{enumerate}[i)]
\item For any $t>0$, and $G, H\in \mathcal{S}$,
	\begin{equation}
	\label{eq:fixedtimecovariance}
	\E(Y_t(G)Y_t(H))=\chi(\rho) \langle G,H\rangle.
	\end{equation}
\item $Y$ satisfies the $L^2$ energy condition (cf. Definition \ref{def:L2EC}), so that for any $t\geq 0$, the tempered distribution $A_{0,t}[Y]\in \mathcal{S}'$ is well-defined.
\item For any $G \in \mathcal{S}$,
\[M_t (G):= Y_t (G) - Y_0 (G) -  D(\rho) \int_0^t Y_s (\partial_u^2 G) ds +\frac{1}{2}D'(\rho)A_{0,t}(G)\]
\[N_t(G):=\big[M_t (G)\big]^2 - 2 t   \sigma(\rho) \|\partial_u G\|_{L^2 (\R)}^2\]
	are both integrable martingales w.r.t. $Y$'s natural filtration.
\end{enumerate}
\end{definition}

We are now in a position to state our main result, which derives the fluctuation field in the weakly asymmetric case.
\begin{theorem}[Weakly asymmetric case]
\label{thm:weak_asymFEP}
For $\sym=1$, the FEP's  fluctuation field $\{\mc{Y}^N_t, \;0\leq t\leq T\}$ introduced in \eqref{eq:YNt}  converges in the uniform topology on $D\big([0,T], \mathcal{S}'(\R)\big)$, as $N\to\infty$ to a process $\{Y_t, \; 0\leq t\leq T\} $, which is 
\begin{enumerate}[i)]
\item solution to the stochastic heat equation in the sense of Proposition \ref{prop:carsolSHE} for $\gamma < 3/2$,
\item solution to the stochastic Burgers equation in the sense of Definition  \ref{def:solSBE} for $\gamma=3/2$.
\end{enumerate}
This convergence is to  be understood as a weak uniform convergence of the distribution of $\mathcal{Y}_N $ to that of $Y$.
\end{theorem}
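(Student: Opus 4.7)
The plan is to reduce the theorem to the already-established weak/asymmetric stationary fluctuation results for the classical constant-rate zero-range process by pushing the mapping announced in Section~\ref{sec:mapping} through to the level of density fluctuation fields. First I would fix the mapping from FEP to ZRP: label the empty sites of $\eta$ by $\{\xi_y(t)\}_{y\in\Z}$ (with some convention anchoring $\xi_0$ near the origin), and set $\omega_y(t):=\xi_{y+1}(t)-\xi_y(t)-1$, the number of particles between consecutive empty sites. In the ergodic component this is a bijection, and under $\pi_\rho$ the image is exactly the product geometric $\mu_\rho$ corresponding to a constant-rate ZRP of density $\alpha=(2\rho-1)/(1-\rho)$; the jump rates of FEP match those of this ZRP at rate one (this is the content of \eqref{eq:mappingdistributions}). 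Note that in this setting the ZRP is not degenerate, which is the key simplification.

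Next I would rewrite the FEP fluctuation field as a ZRP fluctuation field. Using $\eta_x=1-\sum_{y}\mathbf{1}_{\{\xi_y=x\}}$ and a summation-by-parts in $G$, one can show that for any $G\in\mathcal{S}$,
\[
\mathcal{Y}^N_t(G)=-(1-\rho)\,\mathcal{Z}^N_t(G_\rho)+R^N_t(G),
\]
where $\mathcal{Z}^N_t$ is the analogous ZRP fluctuation field (with rescaled macroscopic variable $v=(1-\rho)u$ as in the notation table), $G_\rho$ is the pushforward test function, and $R^N_t(G)$ is an error coming from the deviation of the empty sites $\xi_y(t)/N$ from the deterministic positions $y/((1-\rho)N)$. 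The main input here, which I expect to be the most delicate step, is showing that $R^N_t(G)\to 0$ uniformly in $t\in[0,T]$ in probability. By the stationarity of $\pi_\rho$ and the formulas in Appendix~\ref{sec:stationaryDist}, the empty-site positions have stationary increments with exponential moments, so a union bound together with a maximal inequality on finitely many sites (since $G$ is rapidly decaying and one can truncate) should yield the required uniform control. This is the mapping of fields announced in Section~\ref{sec:mappingproof}.

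Having identified $\mathcal{Y}^N_t$ with $\mathcal{Z}^N_t$ up to negligible errors and the deterministic shift $v_N$ (which corresponds precisely to traveling along the characteristics of the hydrodynamic equation), I would invoke the stationary fluctuation theorems of \cite{gonccalves2010equilibrium,gonccalves2015stochastic} for the weakly asymmetric constant-rate ZRP: these give convergence of $\mathcal{Z}^N_t$ (in the co-moving frame) to the stationary SHE for $\gamma<3/2$ and to the stationary stochastic Burgers equation for $\gamma=3/2$, together with the $L^2$ energy condition of Definition~\ref{def:L2EC}. The only remaining work is to translate coefficients: compute the ZRP diffusion/conductivity/compressibility at density $\alpha(\rho)=(2\rho-1)/(1-\rho)$ and verify, via elementary chain-rule computations taking into account the spatial change of variable $v=(1-\rho)u$ and the factor $(1-\rho)$ in front of $\mathcal{Z}^N$, that one recovers exactly $D(\rho)=1/\rho^2$, $\sigma(\rho)=(2\rho-1)(1-\rho)/\rho$, $\chi(\rho)=\rho(1-\rho)(2\rho-1)$ from \eqref{eq:arho}--\eqref{eq:Defchirho}, and the correct nonlinear coefficient $\tfrac12 D'(\rho)$ from the Taylor expansion of the ZRP flux. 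Tightness in the uniform topology on $D([0,T],\mathcal{S}')$ and the fixed-time covariance \eqref{eq:fixedtimecovariance} follow from the ZRP side, so the characterizations in Proposition~\ref{prop:carsolSHE} and Definition~\ref{def:solSBE} apply.

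The hard part will be the field-level mapping step: showing that the discrepancy $R^N_t$ vanishes uniformly in $t$ and on smooth $G$, because this requires sharp and uniform control of the empty-site positions $\xi_y(t)$ under the FEP dynamics, not only at equilibrium but jointly in time. I expect this to use the identity \eqref{eq:mappingdistributions} to reduce fluctuations of $\xi_y$ to partial sums of i.i.d. geometric random variables plus a martingale correction arising from the current of the ZRP, controlled by Doob's inequality. Once this uniform control is in hand, the remainder of the argument is essentially a coefficient bookkeeping built on top of the existing ZRP literature.
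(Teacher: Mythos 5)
Your overall strategy is the same as the paper's: map to the constant-rate ZRP, identify the FEP field with the ZRP field up to a dilation by $(1-\rho)$ and a frame shift, control the empty-site positions uniformly in time, and then import the stationary fluctuation theorems of \cite{gonccalves2010equilibrium,gonccalves2015stochastic} with a change of coefficients. The outline is sound, but there is one genuine gap. You assert that under $\pi_\rho$ the image of the mapping ``is exactly the product geometric $\mu_\rho$'' and attribute this to \eqref{eq:mappingdistributions}. That is not what \eqref{eq:mappingdistributions} says, and it is false: the pushforward $\pi_\rho\circ\Pi^{-1}$ is the measure $\widehat{\mu}_\rho$ on pairs $(\omega,X_0)$, whose $\omega$-marginal agrees with $\mu_\rho$ off the origin but carries the size-biased law \eqref{eq:centralcluster} at $\omega_0$ (with $X_0$ uniform on the central cluster). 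Consequently the mapped zero-range process is \emph{not} started from its stationary measure, so you cannot directly invoke the equilibrium fluctuation theorems, nor the ``partial sums of i.i.d.\ geometrics'' and stationary large-deviation estimates on which your control of $R^N_t$ rests. The missing idea is Proposition \ref{prop:origindistortion}: by attractiveness and the basic coupling, the total discrepancy at all times between the process started from $\widehat{\mu}_\rho$ and one started from $\mu_\rho$ equals the initial single-site discrepancy at the origin, which is bounded in $L^2$ and hence invisible at the $N^{-1/2}$ scale. Without some such device the reduction to the stationary ZRP is unjustified. (Relatedly, your $\omega_y=\xi_{y+1}-\xi_y-1$ is off by one: one needs $-2$ so that $\omega_y\in\N$, the marginals are geometric on $\N$, and the facilitation constraint $\eta_{x-1}\eta_x=1$ becomes the constant-rate condition $\omega_y\geq 1$ rather than $\omega_y\geq 2$.)

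Two further points on the step you correctly flag as hardest. First, the empty sites do not fluctuate around the static positions $y/((1-\rho)N)$: one has $X_y(t)=\sum_{y'=1}^{y}[\omega_{y'-1}(t)+2]-J^{\rm ZR}_{-1,0}(t)+X_0(0)$, and the current has mean $tN^\gamma(2\rho-1)/\rho$, which is exactly what shifts the ZRP frame speed $v_N'$ relative to $v_N$; the replacement must be made against this drifting reference. Second, the time-uniform control of the \emph{centered} current $\sup_{t\le T}|\bar{J^{\rm ZR}_{-1,0}}(t)|$ is not obtainable by Doob's inequality alone, since the current is not a martingale: the paper's Lemma \ref{lem:current} decomposes it through a test function $G_\ell$ into a density-field term, a martingale term and time integrals, optimizes over $\ell$, and handles the supremum via time discretization plus large deviations, yielding a second moment $O(N^{3/2}(\log N)^3)$ --- which is precisely the bound that makes the whole range $\gamma\le 3/2$ (and $\gamma<4/3$ in the asymmetric case) work. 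Your sketch names the right objects, but the quantitative bound, not just a qualitative union bound, is what the argument hinges on.
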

To give a more explicit description of this convergence, by Skorokhod's representation theorem, in both cases one can build the limiting process $\{Y_t, \; 0\leq t\leq T\} $ on the same probability space $(\Omega, \mathscr{F}, \bb{P})$ as the facilitated exclusion process $\{\eta_t, 0\leq t\leq T\}$, and have that for any $G\in \mathcal{S}$
\[\limsup_{N\to\infty}\sup_{0\leq t\leq T}|\mc{Y}^N_t(G)-Y_t(G)|=0 \qquad \bb{P}\mbox{-a.s.}.\]
Note that this result does not seem to explicitly depend on the value of $\gamma$, except through the condition $\gamma =3/2$ or $\gamma <3/2$, which may seem strange since the particle's motion does. This is natural, however, because we are looking at the scaling limit of the moving field $\mc{Y}^N_t$, which translates at a $\gamma$-dependent speed.

\bigskip

We now consider the (totally) asymmetric FEP, and let $\sym=0$ and $\gamma < 4/3$ in \eqref{eq:jumprates}, in which case we have the following result.

\begin{theorem}[Totally asymmetric case]
\label{thm:asymFEP}
For  $\sym=0$ and $\gamma < 4/3$, the FEP's  fluctuation field $\{\mc{Y}^N_t, \;0\leq t\leq T\}$ introduced in \eqref{eq:YNt}  converges in the weak topology on $D\big([0,T], \mathcal{S}' \big)$, as $N\to\infty$ to a stationary Gaussian process $Y$ with covariance
\[\E(Y_t(G)Y_s(H))=\chi(\rho) \langle G,H\rangle.\]
In other words, the limiting process $Y_t=\sqrt{2\sigma(u)}\partial_u \mathcal{W}_t$ is a time-integrated space-time white noise, or, equivalently, the space derivative of  a $(2,1)$-Brownian sheet.
\end{theorem}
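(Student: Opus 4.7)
The plan is to follow the mapping strategy of Section~\ref{sec:mappingproof}, specialized to the totally asymmetric case $\sym=0$. Under the mapping of Section~\ref{sec:mapping}, the stationary FEP at density $\rho$ is carried to a totally asymmetric classical rate-one ZRP started from its product geometric invariant measure; this ZRP is nondegenerate and its stationary fluctuations are covered by the results of Gon\c{c}alves and Jara in \cite{gonccalves2010equilibrium,gonccalves2015stochastic}. The bulk of the work is therefore to rewrite the FEP fluctuation field \eqref{eq:YNt} as the ZRP density fluctuation field for a suitable transformed test function up to a negligible remainder, and then to transfer the ZRP result back to the FEP.

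Concretely, the first step is to establish an identity of the form
\[
\mc{Y}^N_t(G) \;=\; \frac{1}{\sqrt{N}} \sum_{y\in\Z} \bar\omega_y(t)\, \tilde G_y^N(t) \;+\; \mathcal{R}^N_t(G),
\]
where $\tilde G_y^N(t)$ is a discrete test function encoding both the change of variables $u = v/(1-\rho)$ and the matching of the moving frames on each side at velocity $v_N$, and $\mathcal{R}^N_t(G)$ accounts for the discrepancy between the random positions of the empty sites of $\eta$ and their deterministic approximations $k/(1-\rho)$. Showing that $\mathcal{R}^N_t(G)$ tends to zero in probability uniformly in $t \in [0,T]$ is where the sharp hole-position estimates flagged in the introduction enter, and is the analytic core of Section~\ref{sec:mappingproof}.

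Once this rewriting is in place, the main term is, up to the moving frame, the stationary density fluctuation field of the constant-rate ZRP in characteristic coordinates on the time scale $N^\gamma$, $\gamma<4/3$. The fluctuation theorem for asymmetric zero-range processes in this regime (\cite{gonccalves2010equilibrium}) yields a stationary Gaussian limit whose fixed-time covariance is proportional to the ZRP's compressibility. Undoing the change of variables and using the identities between the FEP's and ZRP's stationary states recorded in~\eqref{eq:mappingdistributions} and Appendix~\ref{sec:stationaryDist}, together with the compressibility formula~\eqref{eq:Defchirho}, converts this to the announced covariance $\chi(\rho)\langle G, H\rangle$. Tightness in $D([0,T],\mathcal{S}')$ is obtained via Mitoma's criterion applied to the transformed ZRP field, using the standard martingale decomposition and second-moment bounds available in the stationary regime.

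The main obstacle is the sharp upper threshold $\gamma<4/3$. On the ZRP side, this is a second-order Boltzmann--Gibbs condition: after subtracting the linear contribution of the asymmetric current, which is precisely absorbed by moving along characteristics at velocity $v_N$, the residual quadratic functional of $\omega$ must have variance $o(1)$; this is known to hold precisely for $\gamma<4/3$, the KPZ window opening at $\gamma=4/3$. On the FEP side, the remainder $\mathcal{R}^N_t(G)$ must be controlled at a finer $N^{\gamma/2}$-precision than what sufficed for the hydrodynamic limit in \cite{erignoux2022mapping}, which is exactly the reason the sharper uniform hole-position estimates developed in Section~\ref{sec:mappingproof} become indispensable.
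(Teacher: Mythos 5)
Your proposal follows essentially the same route as the paper: Section~\ref{sec:mappingproof} rewrites $\mc{Y}^N_t(G)$ as the ZRP fluctuation field $\mc{Z}^N_t(\widetilde{G}_\rho)$ in the appropriate moving frame plus a remainder controlled via Proposition~\ref{prop:error} and Lemmas~\ref{lem:replaceX_y2}--\ref{lem:replaceX_y1}, and then invokes Theorem~\ref{thm:Gonc2} (from \cite{gonccalves2010equilibrium}) together with the change of variables $\widehat{G}_\rho$ and the compressibility identity to recover the stated covariance. The one small inaccuracy is your attribution of the threshold $\gamma<4/3$ partly to the FEP-side remainder: in the paper this restriction enters solely through the cited ZRP result, while the mapping estimates (Lemma~\ref{lem:current}, Proposition~\ref{prop:error}) are valid for all $\gamma\leq 3/2$.
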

Once again, the limiting process does not depend on the asymmetry exponent $\gamma$, because the field $\mc{Y}^N_t$ we are looking at is taken in a moving frame that does depend on $\gamma$.

\bigskip
The proof of Theorems \ref{thm:weak_asymFEP} and \ref{thm:asymFEP} is the purpose of Section \ref{sec:mappingproof}. In order to prove these two results, we will rely on a classical mapping to an attractive zero-range process, already exploited to derive the FEP's hydrodynamic limit in both symmetric and weakly asymmetric cases, and described in details in Section \ref{sec:mapping}. The derivation of the stationary fluctuations field's scaling limit then follows from a regularity estimate on the mapping in the stationary state (cf. Proposition \ref{prop:error} below), together with the derivation of stationary fluctuation already obtained in the weakly asymmetric case in \cite{gonccalves2015stochastic} and in the totally asymmetric case in  \cite{gonccalves2010equilibrium}.  

\bigskip

In Section \ref{sec:proofEqui}, we then offer a direct, alternative proof of Theorem \ref{thm:weak_asymFEP} in the symmetric case $\sym=1$, $\gamma=-\infty$. The latter relies on a sharp bound (of order $\ell^{-1}\log^2 \ell $) for the equivalence of ensembles (cf. Proposition \ref{pro:equiv} below) which  is interesting in its own right since it significantly improves on the one obtained so far in \cite[Proposition 6.9, p. 697]{blondel2020hydrodynamic} (of order $\ell^{-1/4}$).

\section{Mapping to the constant rate zero-range process}
\label{sec:mapping}
We start by describing a classical mapping between the facilitated exclusion process and a zero-range process. In the ergodic component $\mathcal{E}_\Z$ (cf. \eqref{eq:DefEB}), we have no ergodicity issues, and this mapping can be built straightforwardly to the unconstrained, constant-rate zero-range process.

\subsection{Static mapping}
\label{sec:staticmapping}
Recall the definition \eqref{eq:DefEB} of the ergodic component $\mathcal{E}_\Z$, and fix an ergodic configuration $\eta\in \mathcal{E}_\Z$. Let $\nota{X_0=X_0(\eta)\leq 0}$ be the position, of the first empty site in $\eta$ to the left of (or at) the origin, meaning that $\eta_{X_0}=0$ and  $\eta_x=1$ for  $X_0< x\leq 0$. For any $y>0$ (resp.~$y<0$), we define $\nota{X_y}$ as the position of the $y$-th empty site to the right (resp. to the left) of  $X_0$. We then define 
\begin{equation}
\label{eq:omegaXy}
\omega_y := X_{y+1} - X_y - 2.
\end{equation}
Recall  $\nota{\N=\{0,1,2,\dots\}}$, and note that $\omega_y \in \N$, so that $\omega\in \N^\Z$ defines a zero-range configuration, because $\eta$ was assumed to be ergodic. Of course, the mapping $\eta\mapsto \omega$ is not $1$-to-$1$, since knowledge of the position of the initial empty site $X_0$ is necessary to revert the construction.

\medskip
\begin{figure}
\includegraphics[width=13cm]{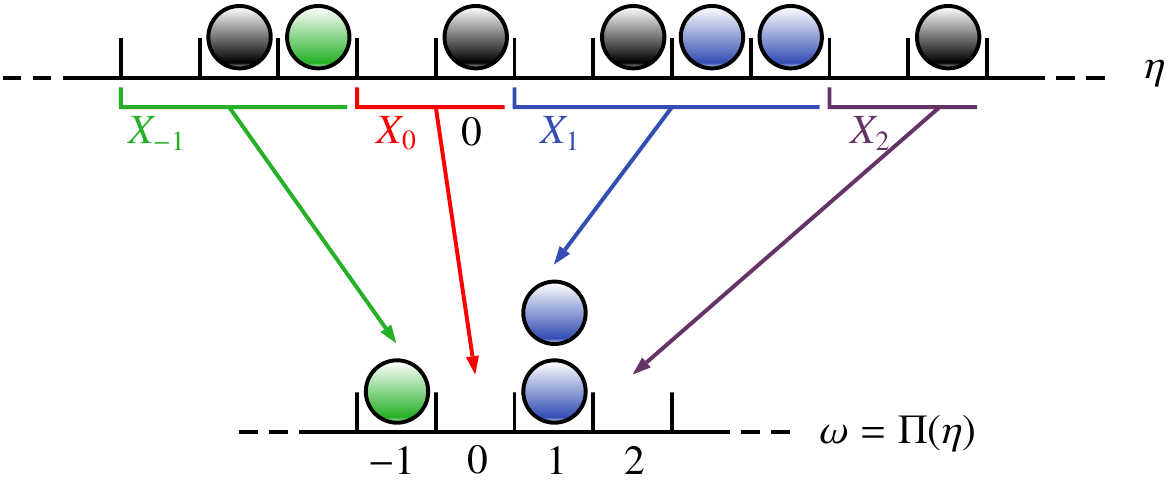}
\label{Fig:Mapping}
\caption{Representation of the mapping $\Pi$ between a FEP ergodic configurations $\eta$ and zero-range configuration $\omega$}
\end{figure}

Instead, denote by $\nota{\Pi:\eta\mapsto (\omega, X_0)}$, the mapping represented in Figure \ref{Fig:Mapping} which, with an ergodic configuration $\eta$, associates the zero-range configuration $\omega\in \N^\Z$ built through \eqref{eq:omegaXy} and the position of the first empty site left of the origin. It is  straightforward to show that the mapping $\Pi$ is a bijection between the FEP's ergodic configuration $\mathcal{E}_\Z$, and the set 
\begin{equation}
\label{eq:EZhat}
\widehat{\mathcal{E}}_\Z:=\{(\omega, X_0)\in \N^\Z\times \Z \; \vert \;-\omega_0-1\leq  X_0\leq 0\}.
\end{equation}
The reverse mapping $\Pi^{-1}$ is easily defined; choose a zero-range configuration $\omega\in \N^\Z$ and  an integer $-\omega_0-1\leq X_0\leq 0$. The associated ergodic configuration $\eta$ is then built by placing an empty site in $\eta$ at site $X_0$, and then choosing the consecutive positions $X_y$ of empty sites in $\eta$ according to $\omega$ and \eqref{eq:omegaXy}. We then have as wanted $\eta=\Pi^{-1}(\omega,X_0)$. 

\subsection{Dynamic mapping}
\label{sec:Dynmap}
We now consider a dynamical version of the mapping above : to do so, consider a trajectory $\{\eta(t), \; t\geq 0\}$ of the FEP starting from an ergodic configuration $\eta(0)\in \mathcal{E}_\Z$. Let $\nota{X_0(0)}$ be the position of the first empty site in $\eta(0)$ to the left of (or at) the origin at the initial time. Then, as before we define for any $y>0$ (resp.~$y<0$), $\ccl{X_y(0)}$ is the position of the $y$-th empty site to the right (resp. to the left) of $X_0(0)$.  We keep track of each trajectory $\ccl{\{X_y(t),\; t\geq 0\}}$ of the $y$-th empty site for $y\in \Z$ in $\{\eta(t), \; t\geq 0\}$.
Since the jumps are nearest neighbor, the orders of the empty sites are preserved along the evolution of the process, \emph{i.e.}~for any $t \geq 0$,
\[\ldots< X_{-1} (t) < X_0 (t) < X_1 (t) < \ldots.\] 
We then define as in the static case
\begin{equation}
\label{eq:omegaXyt}
\omega_y (t) := X_{y+1} (t) - X_y (t) - 2\quad  \mbox{ for }y\in \Z.
\end{equation}
In order not to confuse with the static mapping, we denote $\nota{\Pi^\star[\eta]}$ this dynamic mapping between trajectories, meaning that $\Pi^\star[\eta](t)=\omega(t)$. Note that we do not have in general that $\Pi^\star[\eta](t)=\Pi(\eta(t))$, unless the tagged empty site that was at time $0$ the first left of the origin is still the first left of the origin at time $t$.

\medskip

It is straightforward to show that if $\eta$ is a FEP, $\{\omega (t), \;t\geq 0\}=\Pi^\star[\eta]$ evolves as the classical zero-range process with generator $\genzr$,  which acts on local functions $f: \N^\Z \rightarrow \R$ as 
\begin{equation}
\label{eq:Lzr}
\genzr f (\omega) = \sum_{y \in \Z}  \mathbf{1}_{\{\omega_y \geq 1\}}\big\{ p_N  \big(f(\omega^{y,y+1}) - f(\omega) \big) +  q_N  \big(f(\omega^{y,y-1}) - f(\omega) \big) \big\}.\end{equation}
Since we use different letters for exclusion ($\eta$) and zero-range ($\omega$) configurations, without confusion, we also denote $\omega^{y,y\pm 1}$ the zero-range configuration obtained from $\omega$ after a particle jumps from $y$ to $y\pm 1$,
\[
\omega_{y'}^{y,y\pm 1}=
\begin{cases}
\omega_y-1 & \mbox{ if } y'=y,\\
\omega_y+1 & \mbox{ if } y'=y\pm 1,\\
\omega_{y'} & \mbox{ otherwise.}
\end{cases}
\]
Given a distribution $\nu$ on $\N^\Z$, we denote by   $\nota{{\bf P}^N_{\nu}}$  the distribution of the zero-range process started from the initial state $\omega(0)\sim \nu$ and driven by the generator $\genzr$ and by $\nota{{\bf E}^N_{\nu}}$ the corresponding expectation.

\subsection{Stationary states}
\label{sec:SSZR} 
We now consider the stationary states for the zero-range process. We define the (grand-canonical) equilibrium distributions for the generator $\genzr$ as the product distributions with marginals in $\omega_y$ given by geometric distributions with parameter the active density $a(\rho)$ (cf.  \eqref{eq:arho}), namely
\begin{equation}
\label{eq:Defmurho}
\mu_\rho (\omega_y = k) =  a(\rho)^{k}(1-a(\rho)),  \; k\geq 0.
\end{equation}
Define 
\begin{equation}
\label{eq:defalpha}
\nota{\alpha(\rho):=\frac{a(\rho)}{1-a(\rho)}}=\frac{2\rho-1}{1-\rho},
\end{equation}
one easily obtains the identity $\E_{\mu_\rho}(\omega_y)=\alpha(\rho),$ and that $\mu_\rho$ is stationary for $\genzr$.  Further denote by $\nota{\widehat{\mu}_\rho}$ the distribution on $\widehat{\mathcal{E}}_\Z$ (defined in \eqref{eq:EZhat}) such that under $\widehat{\mu}_\rho$, 
\begin{itemize}
\item the zero-range configuration $\omega_{\mid \Z \setminus \{0\}}$ is distributed as $\mu_\rho$, defined by \eqref{eq:Defmurho}, everywhere except at the origin.
\item At the origin, $\omega_0$ is distributed, independently from the rest of the configuration, as 
\begin{equation}
\label{eq:centralcluster}
\widehat{\mu}_\rho(\omega_0=k)=(k+2) a(\rho)^k \rho(1-a(\rho))^2
\end{equation}
\item the distribution of $X_0$ conditionally to $\omega$ is uniform in $\{-\omega_0-1,\dots,0\}$.
\end{itemize}
In other words, for any $(\omega, x)\in \widehat{\mathcal{E}}_\Z$, we have 
\begin{equation}
\label{eq:mumuhat}
\widehat{\mu}_\rho(\omega,x)=\rho(1-a(\rho))\mu_\rho(\omega)
\end{equation}

\medskip

 We claim that the stationary state $\pi_\rho$ for the FEP can be built through the mapping,  by
\begin{equation}
\label{eq:mappingdistributions}
\pi_\rho =\widehat{\mu}_\rho \circ \Pi\qquad \mbox{ and }\qquad \widehat{\mu}_\rho =\pi_\rho \circ \Pi^{-1}.
\end{equation}
Note that through this mapping identity, sites in the zero-range process are associated with \emph{clusters of particles in the FEP,} namely consecutive particles preceded by an empty site.
Given the apparent  breaking of translation invariance in the definition of the central cluster $(\omega_0)$ in $\widehat{\mu}_\rho$, it is not a priori clear that $\widehat{\mu}_\rho \circ \Pi$ would yield a translation invariant distribution.  The core of the argument is that the central particle cluster, the one of the origin, needs the shifted distribution \eqref{eq:centralcluster} to account for the multiple ways it can be placed at the origin, since $X_0\sim Unif(\{-\omega_0-1,\dots,0\})$. The proof of identity \eqref{eq:mappingdistributions} is given in Appendix \ref{sec:statPi}, where we given an alternative construction for $\widetilde{\pi}_ \rho:=\widehat{\mu}_\rho \circ \Pi$, and prove that the latter is translation invariant, and satisfies \eqref{pirho2}.

\medskip

It might also appear counter-intuitive that the stationary state $\pi_\rho$ for the FEP dynamics is mapped to a distribution $\widehat{\mu}_{\rho}$ whose marginal in  $\omega$ is \emph{not stationary} for the mapped zero-range dynamics (due to the loss of translation invariance at the origin in $\omega$). This is to be expected however, because the mapped dynamics must be considered instead on the pair $(\omega,X_0)$, and jumps at the origin have effects on both components, so that the term coming from the displacement of the tagged empty site offsets the loss of stationarity of the zero-range due to the lack of stationarity at the origin. This statement is explored in more detail in Appendix \ref{sec:SSmapping}. 

The core of the arguments amounts to the following : to build $\pi_\rho$, one has to put i.i.d. particle clusters (meaning sets of consecutive particles followed by an empty site) with geometric number of particles distributed as $1+\mu_\rho(\cdot)$, (i.e. $1+\omega$)  and then randomly shift the configuration by choosing the origin (i.e. $X_0$). If the origin is empty, then one can simply build i.i.d. clusters and everything works as expected. If the origin is occupied, however, one needs to choose ``at random'' an occupied site, and choosing an occupied site ``at random'' biases towards larger particle clusters, because the latter contain by definition more particles. In other words, building geometric clusters of particles, and then choosing one particle uniformly among those, the distribution of the cluster of the chosen particle is no longer going to be geometrically distributed, and its distribution is instead going to be the tilted distribution  \eqref{eq:centralcluster}.

\medskip

In what follows, we will use the following result, which can be straightforwardly proved using the zero-range process's attractiveness. 
\begin{proposition}[Discrepancies due to the distortion at the origin]
\label{prop:origindistortion}
There exists a coupling $\Q$ between two trajectories $ \omega$, $\xi$ of the rate $1$ zero-range process such that 
$\omega(0)\sim \mu_\rho$, $\xi(0)\sim \widehat{\mu}_\rho$, and such that 
\[\sum_{y\in\Z} |\xi_y(t)-\omega_y(t)|=|\xi_0(0)-\omega_0(0)|=\xi_0(0)-\omega_0(0) \qquad \Q-\mbox{a.s.}\]
In other words, there exists a coupling between two zero-range processes such that the total number of discrepancies at any time between the two is equal to the number of discrepancies initially at the origin.
\end{proposition}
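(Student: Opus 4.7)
The plan is to construct $\Q$ in two stages. First, I will couple the initial distributions so that $\xi(0)$ and $\omega(0)$ agree at every site $y\neq 0$ and satisfy $\xi_0(0)\geq \omega_0(0)$ almost surely, which reduces to showing that $\widehat{\mu}_\rho$ stochastically dominates $\mu_\rho$ at the origin. Second, I will run $(\omega(t),\xi(t))$ under the basic coupling of the rate one zero-range dynamics, which is attractive and conserves the finite number of initial discrepancies.

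For the first stage, the marginals of $\widehat{\mu}_\rho$ and $\mu_\rho$ on $\{\omega_y,\;y\neq 0\}$ coincide by construction, so only the marginal at $y=0$ needs attention. Using \eqref{eq:Defmurho}, \eqref{eq:centralcluster}, and the elementary identity $\rho(1-a(\rho))=1-\rho$, I compute the likelihood ratio
\[
\frac{\widehat{\mu}_\rho(\omega_0=k)}{\mu_\rho(\omega_0=k)}=\frac{(k+2)\,a(\rho)^k\,\rho(1-a(\rho))^2}{(1-a(\rho))\,a(\rho)^k}=(k+2)(1-\rho),\qquad k\in\N,
\]
which is strictly increasing in $k$. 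This monotone likelihood ratio ordering implies that $\widehat{\mu}_\rho$ stochastically dominates $\mu_\rho$ at the origin, so Strassen's theorem provides a joint law of $(\omega_0(0),\xi_0(0))$ with the correct marginals and $\xi_0(0)\geq \omega_0(0)$ almost surely. Setting $\xi_y(0)=\omega_y(0)$ for $y\neq 0$ then yields an initial coupling satisfying $\sum_{y\in\Z}|\xi_y(0)-\omega_y(0)|=\xi_0(0)-\omega_0(0)$, with only finitely many sites carrying a discrepancy.

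For the second stage, I evolve $(\omega,\xi)$ through the standard Harris graphical construction: to each directed edge $(y,y\pm 1)$ I attach an independent Poisson clock, and whenever a clock rings each process attempts to move a particle along that edge, succeeding if and only if the source site is occupied. Because the rate $\mathbf{1}_{\omega_y\geq 1}$ is non-decreasing in $\omega_y$, this coupling is attractive: the pointwise inequality $\xi\geq\omega$ is propagated from time zero to all $t\geq 0$. Moreover, each clock firing either affects both processes identically (when both source occupations are positive) or moves a single second class particle from the source to the target (when only $\xi$ has a particle at the source). The finitely many second class particles $\xi_y(0)-\omega_y(0)$ therefore perform random motion without being created or destroyed, so for every $t\geq 0$
\[
\sum_{y\in\Z}|\xi_y(t)-\omega_y(t)|=\sum_{y\in\Z}\big(\xi_y(t)-\omega_y(t)\big)=\sum_{y\in\Z}\big(\xi_y(0)-\omega_y(0)\big)=\xi_0(0)-\omega_0(0),
\]
which is the desired identity.

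The sole non-routine ingredient is the likelihood ratio computation above: its monotonicity in $k$ is a direct consequence of the size-biasing built into \eqref{eq:centralcluster}, and this is precisely the property that makes the initial coupling possible. The remainder is a textbook application of attractiveness for constant-rate zero-range processes.
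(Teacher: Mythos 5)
Your proposal is correct and follows essentially the same route as the paper: couple the initial configurations to agree off the origin and use stochastic domination at the origin (the paper asserts this from the size-biasing of $\widehat{\mu}_\rho$, while you make it explicit via the likelihood ratio $(k+2)(1-\rho)$), then run the basic coupling and use attractiveness to conserve the discrepancies. The extra likelihood-ratio computation is a welcome but minor elaboration of a step the paper leaves implicit.
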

We do not detail the  proof of this result, it is a direct consequence of our choice of initial distributions, and  the classical so-called \emph{basic coupling}, which we now briefly describe. Because the initial distributions are identical everywhere except at the origin, we can choose $\omega_y(0)=\xi_y(0)$ for any $y\neq 0$. At the origin, we biased the distribution $\widehat{\mu}_\rho$ towards large clusters, we can therefore couple $ \omega$ and $\xi$ in such a way that  $\omega_0(0)\leq \xi_0(0)$. Because the constant rate zero-range process is attractive, we then endow each site with i.i.d. Poisson clock, and each time the clock rings on a site in which both $\omega$ and $\xi$ have particles, a particle performs the same jump in the two configurations. If instead, only $\xi$ has a particle, we make it jump normally in $\xi$ and nothing happens in $\omega$. Under this basic coupling, it is straightforward to show that both $\omega$ and $\xi$ are constant rate zero-range processes, and that Proposition \ref{prop:origindistortion} holds.

\medskip

Because it yields a tight control of discrepancies, Proposition \ref{prop:origindistortion} will allow us to assume that the mapped zero-range process is initially in a stationary state as well rather than $\widehat{\mu}_\rho$, and therefore to prove the mapping estimates we need in the stationary state, as well as use the stationary fluctuations results obtained in \cite{gonccalves2010equilibrium,gonccalves2015stochastic}.

\subsection{Position of the tagged empty site in the dynamic mapping}  Given a trajectory $\omega:=\{\omega(t),\; t\geq 0\}$ of the zero-range process, we denote by $\nota{J^{\rm ZR}_{x,x+1} (t)\in \Z}$ the particle current going through $(x,x+1)$ in $[0,t]$, namely the total particle number going through the edge $(x,x+1)$ before time $t$. It can be formally written as 
\begin{equation}
\label{eq:JtZR}
J^{\rm ZR}_{x,x+1} (t)=\sum_{y\geq x+1} \big\{\omega_y(t)-\omega_y(0)\big\}.
\end{equation}
It is straightforward to show that in the stationary state, its expectation is given by 
\begin{equation}
\label{eq:averagecurrent}
{\bf E}_{\mu_\rho}^N \pa{J^{\rm ZR}_{x,x+1} (t)}= tN^\gamma \mu_{\rho}(\omega_0\geq 1)=tN^\gamma \frac{2\rho-1}{\rho}.
\end{equation}
As before, we denote by $\nota{\bar{J^{\rm ZR}_{x,x+1}} (t):=J^{\rm ZR}_{x,x+1} (t) - tN^\gamma (2\rho-1)/\rho}$ the corresponding centered variable.

\medskip

Consider now a trajectory $\{\eta(t),\;t\geq 0\}$ of the FEP, and $\{\omega(t),\;t\geq 0\}=\Pi^\star[\eta]$ the mapped zero-range trajectory introduced in Section \ref{sec:Dynmap}.  Then, the position $X_0(t)$ of the tagged empty site at time $t$ can then be expressed in terms of the mapped zero-range process $\omega$ through the identity 
\begin{equation}
\label{eq:X0t}
X_0(t)=-J^{\rm ZR}_{-1,0} (t)+ X_0 (0).
\end{equation}
We now give an estimate of the variance of stationary current going through the origin, that will be used repeatedly in the proof of  Theorems \ref{thm:weak_asymFEP} and  \ref{thm:asymFEP}. 
\begin{lemma}
\label{lem:current} 
There exists a constant $C$ independent of $N$ such that
\begin{equation}
\label{eq:L2current}
\E_{\rho}^N \Big[\sup_{0 \leq t \leq T} \bar{J^{\rm ZR}_{-1,0}} (t)^2\Big] \leq C (\log N)^3\Big[N^{\gamma} + \sym N^{4/3}\Big]=O\big(N^{3/2} (\log N)^3\big).
\end{equation}
\end{lemma}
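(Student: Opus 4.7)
The plan is to reduce, via Proposition~\ref{prop:origindistortion}, to a zero-range process started from the product measure $\mu_\rho$, then to bound the variance of $\bar J^{\rm ZR}_{-1,0}(t)$ at each fixed time $t\in[0,T]$, and finally to pass to the supremum through a dyadic chaining argument absorbing the $(\log N)^3$ factor.

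For the reduction, Proposition~\ref{prop:origindistortion} couples the FEP-induced ZRP (with distorted initial law $\widehat\mu_\rho$) with a ZRP $\omega$ started from $\mu_\rho$ so that the total number of discrepancies at any time is bounded by the square-integrable initial cluster difference $\xi_0(0)-\omega_0(0)$. The currents across $(-1,0)$ of the two coupled processes therefore differ by a quantity of uniformly bounded second moment, which is negligible compared to $N^\gamma+\sym N^{4/3}$, so it suffices to prove the estimate when $\omega(0)\sim\mu_\rho$.

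For the fixed-time variance, I would use the telescopic identity \eqref{eq:JtZR} to write $\bar J^{\rm ZR}_{-1,0}(t)=\sum_{y\geq 0}[\omega_y(t)-\omega_y(0)]-tN^\gamma a(\rho)$. In the weakly asymmetric case $\sym=1$, one expands the second moment into two-point covariances $\mathrm{Cov}_{\mu_\rho}(\omega_y(t),\omega_{y'}(0))$, which decay at the scale of a symmetric random walk of rate $2N^2$ (up to $O(N^\gamma)$ drift corrections); summation yields the subdiffusive estimate $\mathbf{E}_{\mu_\rho}^N\bar J^{\rm ZR}_{-1,0}(t)^2\leq CN\sqrt{t}\leq CN\leq CN^{4/3}$ uniformly in $t\leq T$. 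In the totally asymmetric case $\sym=0$, the Dynkin decomposition $\bar J^{\rm ZR}_{-1,0}(t)=M_t+R_t$ (martingale plus centered replacement term) combined with the bracket bound $\mathbf{E}\langle M\rangle_T\leq TN^\gamma a(\rho)$ and a time-decorrelation estimate on $R_t=N^\gamma\int_0^t\overline{\mathbf{1}_{\{\omega_{-1}(s)\geq 1\}}}\,ds$ (the indicator itself has jump rate $N^\gamma$) gives $\mathbf{E}_{\mu_\rho}^N\bar J^{\rm ZR}_{-1,0}(t)^2\leq CN^\gamma$.

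To pass from fixed $t$ to the supremum, I would dyadically discretize $[0,T]$ at scale $T/K$ with $K=N^c$, and split
\[\sup_{0\leq t\leq T}\bar J^{\rm ZR}_{-1,0}(t)^2 \leq 2\max_{0\leq k\leq K}\bar J^{\rm ZR}_{-1,0}(t_k)^2 + 2\max_{0\leq k<K}\sup_{t\in[t_k,t_{k+1}]}\bigl(\bar J^{\rm ZR}_{-1,0}(t)-\bar J^{\rm ZR}_{-1,0}(t_k)\bigr)^2.\]
The first maximum is controlled by Chebyshev with a union bound over the $K$ points, at a cost of $\log K$. The second is the oscillation of a Poisson-like jump process of total intensity $p_N+q_N$ on a window of length $T/K$; a Bernstein-type deviation inequality combined with a union bound over the $K$ windows yields an additional logarithmic factor. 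Choosing $c$ so that both contributions balance at $N^\gamma+\sym N^{4/3}$ produces the announced estimate, with the $(\log N)^3$ factor absorbing the chaining and deviation losses. The main obstacle is the fixed-time variance bound in the weakly asymmetric case: the naive martingale-bracket bound $\mathbf{E}\langle M\rangle_T=O(N^2)$ is too crude by a factor $N^{2/3}$, and the sharp subdiffusive bound requires a careful covariance computation exploiting the product structure of $\mu_\rho$ (equivalently, a Kipnis-Varadhan control of the gradient $\mathbf{1}_{\{\omega_{-1}\geq 1\}}-\mathbf{1}_{\{\omega_0\geq 1\}}$ with $H_{-1}$-norm of order $N^{-2}$), or a direct invocation of the current estimates established in~\cite{gonccalves2015stochastic}.
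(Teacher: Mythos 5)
Your reduction to the initial law $\mu_\rho$ via Proposition \ref{prop:origindistortion} matches the paper, but the two steps that carry the actual content of Lemma \ref{lem:current} both have genuine gaps. First, the fixed-time variance bound. The telescopic identity \eqref{eq:JtZR} is only formal: $\sum_{y\geq 0}\omega_y(t)$ and $\sum_{y\geq 0}\omega_y(0)$ are each infinite a.s., so the expansion into covariances $\mathrm{Cov}_{\mu_\rho}(\omega_y(t),\omega_{y'}(0))$ requires a truncation, and the assertion that these covariances ``decay at the scale of a symmetric random walk of rate $2N^2$'' is not a fact you can quote: the constant-rate ZRP is an interacting system, its space--time two-point function is not a random-walk kernel, and establishing its positivity and diffusive spreading (needed to sum the covariances and get $O(N\sqrt{t})$) is itself a nontrivial piece of work resting on attractivity or $H_{-1}$ estimates. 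You flag this as the main obstacle but do not resolve it. The paper sidesteps it entirely: it truncates with the linear test function $G_\ell(u)=(1-u/\ell)^+\mathbf{1}_{\{u\geq 0\}}$, obtaining the exact decomposition \eqref{eq:jzr} of $\bar{J^{\rm ZR}_{-1,0}}(t)$ into a static density field (variance $O(N\ell)$ under the product measure, no dynamic covariance needed), an averaged martingale (Doob plus quadratic variation, $O((\sym N+N^{\gamma-1})/\ell)$), and time integrals bounded crudely by Cauchy--Schwarz; optimizing $\ell=N^{1/3}$ (resp. $\ell=N^{\gamma-1}$) gives $N^{4/3}$ (resp. $N^\gamma$) with only elementary inputs. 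This bound is weaker than the sharp $O(N)$ you aim for, but it suffices for \eqref{eq:L2current} and costs nothing beyond stationarity of $\mu_\rho$.

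Second, and more seriously, the passage to the supremum is wrong as stated: Chebyshev combined with a union bound over $K=N^c$ grid points costs a factor $K$, not $\log K$. To pay only a polylogarithmic price for a union bound over polynomially many times you need exponential (not merely second-moment) tail control of the quantity at each fixed time, and no such control is available for $\bar{J^{\rm ZR}_{-1,0}}(t_k)$ from a variance bound; an $L^2$ chaining on dyadic increments fails for the same reason (the increment variances $\sim N\sqrt{t-s}$ do not decay fast enough to beat the cardinality of each dyadic level). This is precisely why the paper's decomposition is structured the way it is: after \eqref{eq:jzr}, the only term requiring a union bound over the $N^{100}$ time points is the static field $\sum_x G_\ell(x/N)\bar\omega_x(t_i)$ of \eqref{eq:3_3}, which under $\mu_\rho$ is a sum of i.i.d.\ geometric variables and therefore satisfies a genuine large deviation bound (the event $B_N$), yielding the $(\log N)^3$ correction; the martingale supremum is handled by Doob with no loss, and the time integrals by $\sup_t|\int_0^t|\leq\int_0^T|\cdot|$. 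Unless you replace your Chebyshev step by an exponential concentration estimate for the object you take the maximum over — which effectively forces you back to a decomposition isolating an i.i.d.\ structure, i.e.\ to the paper's argument — the supremum bound does not go through.
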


\begin{proof} Because we need an estimate that is uniform in time, the proof of this estimate is rather technical.
 We first get back to the zero-range stationary case. First note that  according to identity \eqref{eq:mappingdistributions},
\[\E_{\rho}^N \Big[\sup_{0 \leq t \leq T} \bar{J^{\rm ZR}_{-1,0}} (t)^2\Big]={\bf E}^N_{\widehat{\mu}_\rho} \Big[\sup_{0 \leq t \leq T} \bar{J^{\rm ZR}_{-1,0}} (t)^2\Big],\]
where as before  ${\bf E}^N_{\nu}$ is the expectation w.r.t. the distribution of the zero-range process started from the initial state $\omega(0)\sim \nu$. The right-hand side above is the expectation of the current going through $(-1,0)$ for a zero-range process started from the state $\widehat{\mu}_\rho$ characterized  in \eqref{eq:mappingdistributions}. According to Proposition \ref{prop:origindistortion} above, since the current discrepancy is less than the total particle discrepancy, we  obtain that
\[\abs{\E_{\rho}^N \Big[\sup_{0 \leq t \leq T}\bar{J^{\rm ZR}_{-1,0}} (t)^2\Big]-{\bf E}_{\mu_\rho} \Big[\sup_{0 \leq t \leq T} \bar{J^{\rm ZR}_{-1,0}} (t)^2\Big]} \leq |{\bf E}^N_{\widehat{\mu}_\rho}(\omega_0(0))-{\bf E}^N_{\mu_\rho}(\omega_0(0))|^{2}:=C(\rho)\]
is at most bounded by a constant, so that it is  enough to prove \eqref{eq:L2current} starting from the zero-range's stationary distribution $ \mu_\rho$.

We now consider the stationary zero-range process $\omega$ initially distributed as $\mu_\rho$. For $\ell > 0$, let
\[G_{\ell}(u) = \Big(1-\frac{u}{\ell}\Big)^+ \mathbf{1}_{\{u \geq 0\}}.\]
Then, 
\begin{multline}
\label{eq:JZRZU}
	\frac{1}{\sqrt{N}} \sum_{x \in \Z} G_{\ell} \big(\tfrac{x}{N}\big) \big[\bar{\omega}_x (t) - \bar{\omega}_x (0)\big] = \frac{1}{\sqrt{N}} \sum_{x \in \Z} G_{\ell} \big(\tfrac{x}{N}\big) \big[\bar{J^{\rm ZR}_{x-1,x}} (t) - \bar{J^{\rm ZR}_{x,x+1}} (t)\big]\\
= \frac{1}{\sqrt{N}} \bar{J^{\rm ZR}_{-1,0}} (t) - \frac{1}{\sqrt{N}} \sum_{x =0}^{N \ell} \frac{1}{N \ell} \bar{J^{\rm ZR}_{x,x+1}} (t),
\end{multline}
where $\bar{J^{\rm ZR}_{x,x+1}} (t)$ was introduced after \eqref{eq:averagecurrent}. Once again, we define 
\[\overline{g}(k)={\bf 1}_{\{k\geq 1\}}-\frac{2\rho-1}{\rho}.\]
For $x\in \Z$, 
\[M_{x,x+1} (t) = \bar{J^{\rm ZR}_{x,x+1}} (t) -  \int_0^t \sym N^2 [\bar{g} (\omega_x (s)) - \bar{g} (\omega_{x+1} (s)] + N^\gamma \bar{g} (\omega_x (s)) ds\]
are independent ${\bf P}^N_{\mu_\rho}$-martingales with quadratic variation
\begin{equation}
\label{eq:quadvar}
\<M_{x,x+1}\>_t = \int_0^t (\sym N^2 + N^\gamma) g(\omega_x (s)) +\sym N^2 g(\omega_{x+1}(s)) ds=O(\sym N^2+N^\gamma).
\end{equation}
In particular, \eqref{eq:JZRZU} rewrites
\begin{multline}\label{eq:jzr}
\bar{J^{\rm ZR}_{-1,0}} (t) = \sum_{x \in \Z} G_{\ell} \big(\tfrac{x}{N}\big) \big[\bar{\omega}_x (t) - \bar{\omega}_x (0)\big]  \\
+ \frac{1}{N \ell} \sum_{x=0}^{N \ell} M_{x,x+1} (t) + \frac{\sym N}{\ell} \int_0^t \big[\bar{g} (\omega_0 (s)) - \bar{g} (\omega_{N\ell+1} (s))\big] ds  + \frac{N^{\gamma-1}}{\ell} \int_0^t \sum_{x=0}^{N\ell}  \bar{g} (\omega_x (s)) ds.
\end{multline}

We first bound the last line in \eqref{eq:jzr}. For the martingale term, by Doob's inequality and \eqref{eq:quadvar}, 
\begin{equation}
\label{eq:1Jzr}
{\bf E}^N_{\mu_\rho} \Big[ \sup_{0 \leq t \leq T} \Big(\frac{1}{N \ell} \sum_{x=0}^{N \ell} M_{x,x+1} (t)\Big)^2\Big] \leq 4  {\bf E}^N_{\mu_\rho} \Big[  \Big(\frac{1}{N \ell} \sum_{x=0}^{N \ell} M_{x,x+1} (T)\Big)^2\Big] \leq C \frac{\sym N+N^{\gamma-1}}{\ell}.
\end{equation}
For the last two terms in \eqref{eq:jzr}, which are time integrals, by Cauchy-Schwarz inequality and since both $\overline{g}(\omega_x(s))$ and $\overline{\omega}_x(s)$ are bounded in $L^2({\bf P}^N_{\mu_\rho})$, 
\begin{equation}
\label{eq:2Jzr}
{\bf E}^N_{\mu_\rho} \Big[ \sup_{0 \leq t \leq T} \Big( \frac{\sym N}{\ell} \int_0^t \big[\bar{g} (\omega_0 (s)) - \bar{g} (\omega_{N\ell+1} (s))\big] ds \Big)^2\Big] \leq CT^2 \frac{\sym N^2}{\ell^2},
\end{equation}
\begin{equation}
\label{eq:3Jzr}
{\bf E}^N_{\mu_\rho} \Big[ \sup_{0 \leq t \leq T} \Big( \frac{N^{\gamma-1}}{\ell} \int_0^t \sum_{x=0}^{N\ell}  \bar{g} (\omega_x (s)) ds \Big)^2\Big] \leq CT^2 \frac{N^{2\gamma-1}}{\ell}.\end{equation}

Now, we bound the first term on the right side of \eqref{eq:jzr}. Clearly, 
\begin{equation}
\label{eq:4Jzr}
{\bf E}^N_{\mu_\rho} \Big[ \Big(  \sum_{x \in \Z} G_{\ell} \big(\tfrac{x}{N}\big)  \bar{\omega}_x (0) \Big)^2\Big] \leq CN\ell.
\end{equation}
It remains to bound
\begin{equation}\label{eq:3_3}
{\bf E}^N_{\mu_\rho} \Big[ \sup_{0 \leq t \leq T} \Big(  \sum_{x \in \Z} G_{\ell} \big(\tfrac{x}{N}\big)  \bar{\omega}_x (t) \Big)^2\Big].
\end{equation}
The main issue to estimate this quantity lies in the time supremum inside the expectation. Without it, one would straightforwardly obtain a bound of order  $O(N \ell)$ since the zero-range process is stationary. Below, we will show that \eqref{eq:3_3} is of order $O \big(N (\log N)^3 \ell \big)$. To this end, we first divide the time interval $[0,T]$ into small intervals of size $\epsilon=\epsilon_N:=N^{-100}$. More precisely, let $t_i = iT\epsilon$  for $0 \leq i \leq 1/\epsilon$.  Note that in order for the set of configurations $\{\omega(t_i), 0\leq i\leq 1/\epsilon\}$ to be different than $\{\omega(t), 0\leq t\leq T\}$, there must have been an interval $[t_i, t_{i+1}[$ in which at least two particle jumps occurred. In particular, we bound \eqref{eq:3_3} by
\begin{equation}\label{eq:3_1}
	{\bf E}^N_{\mu_\rho} \Big[ \sup_{0 \leq i \leq \epsilon^{-1}} \Big(  \sum_{x \in \Z} G_{\ell} \big(\tfrac{x}{N}\big)  \bar{\omega}_x (t_i) \Big)^2\Big] + {\bf E}^N_{\mu_\rho} \Big[ \sup_{0 \leq t \leq T} \Big(  \sum_{x \in \Z} G_{\ell} \big(\tfrac{x}{N}\big)  \bar{\omega}_x (t) \Big)^2 \mathbf{1}_{A_N}\Big],
\end{equation}
where $A_N$ is the event 
\begin{multline}
A_N=\bigcup_{i=0}^{1/\epsilon-1}\Big\{\mbox{at least two particle jumps occurred in the box} \{-1,\dots, N \ell+1\}\\
\mbox{during the time interval }[t_i, t_i+1[\Big\}.
\end{multline}
Since for any fixed $i$, the number of jumps in the box $[-1,N\ell+1]$ during  the time interval $[t_i,t_{i+1}]$ is bounded by a Poisson random variable of parameter $T \epsilon N^{2}(N\ell+1) = O(N^{-97} \ell)$,  we have
\[	{\bf P}^N_{\mu_\rho}  (A_N) \leq C \epsilon^{-1} \big( N^{-97} \ell \big)^2 = O \big( N^{-94} \ell^2\big).\]
By Cauchy-Schwarz inequality, we bound the second term in \eqref{eq:3_1} by
\begin{multline*}
{\bf E}^N_{\mu_\rho} \Big[ \sup_{0 \leq t \leq T} \Big(  \sum_{x \in \Z} G_{\ell} \big(\tfrac{x}{N}\big)  \bar{\omega}_x (t) \Big)^4 \Big]^{1/2} {\bf P}^N_{\mu_\rho} \big(A_N\big)^{1/2} \\
\leq  \Big\{ \sum_{x \in \Z} G_{\ell} \big(\tfrac{x}{N}\big)^4 {\bf E}^N_{\mu_\rho} \Big[ \sup_{0 \leq t \leq T} \Big(   \bar{\omega}_x (t) \Big)^4 \Big] \Big\}^{1/2} {\bf P}^N_{\mu_\rho} \big(A_N\big)^{1/2}.
\end{multline*}
A very rough bound shows that 
\begin{equation}\label{eq:3_2}
{\bf E}^N_{\mu_\rho} \Big[ \sup_{0 \leq t \leq T} \omega_0 (t)^4 \Big] \leq C N^8.
\end{equation}
Indeed, let $N_{0,T}$ be the number of particles that are initially outside of the box $[-2N^2T,2N^2T]$ and visit the origin before time $T$. Then, 
\[{\bf E}^N_{\mu_\rho} \Big[ \sup_{0 \leq t \leq T} \omega_0 (t)^4 \Big] \leq C \Big({\bf E}^N_{\mu_\rho} \Big[ \Big( \sum_{|y| \leq 2 N^2 T} \omega_{y} (0) \Big)^4\Big] + {\bf E}^N_{\mu_\rho} \Big[ N_{0,T}^4 \Big] \Big) \leq C \Big( N^8 + {\bf E}^N_{\mu_\rho} \Big[ N_{0,T}^4 \Big] \Big).\]
Dividing the sets $\Z \backslash [-2N^2T,2N^2T]$ into 
\begin{equation}
\bigcup_{k \geq 2} \big([(-k-1)N^2T,-kN^2T] \cup [kN^2T,(k+1)N^2T]\big),
\end{equation} and since a particle travels at speed at most $N^2$, standard large deviation arguments yield
\[{\bf E}^N_{\mu_\rho} \Big[ N_{0,T}^4 \Big]  \leq C \sum_{k \geq 1} N^8 e^{-CkN^2} \leq C N^8 e^{-CN^2},\]
which proves \eqref{eq:3_2}.  Thus, although this bound is certainly not optimal, the second term in \eqref{eq:3_1} by is finally bounded by $C N^{-40} \ell^2$. 

\medskip

We now turn to the first term in \eqref{eq:3_1}. Define the event
\begin{equation}
B_N = \Big\{ \sup_{0 \leq i \leq 1/\epsilon} \Big| \sum_{x \in \Z} G_{\ell} \big(\tfrac{x}{N}\big)  \bar{\omega}_x (t_i) \Big| > N^{1/2} (\log N)^{3/2} \ell^{1/2}   \Big\}.
\end{equation}
By Cauchy-Schwarz inequality and \eqref{eq:3_2}, we write 
\begin{align*}
{\bf E}^N_{\mu_\rho} \Big[ \sup_{0 \leq i \leq 1/\epsilon} \Big(  \sum_{x \in \Z} G_{\ell} \big(\tfrac{x}{N}\big)  \bar{\omega}_x (t_i) \Big)^2\Big]&\leq N (\log N)^3 \ell + {\bf E}^N_{\mu_\rho} \Big[ \sup_{0 \leq i \leq 1/\epsilon} \Big(  \sum_{x \in \Z} G_{\ell} \big(\tfrac{x}{N}\big)  \bar{\omega}_x (t_i) \Big)^2 \mathbf{1}_{B_N}\Big] \\
&\leq N (\log N)^3 \ell + {\bf E}^N_{\mu_\rho} \Big[ \sup_{0 \leq i \leq 1/\epsilon} \Big(  \sum_{x \in \Z} G_{\ell} \big(\tfrac{x}{N}\big)  \bar{\omega}_x (t_i) \Big)^4\Big]^{1/2}   {\bf P}^N_{\mu_\rho}  (B_N)^{1/2} \\
&\leq N (\log N)^3 \ell + C N^5 \ell {\bf P}^N_{\mu_\rho}  (B_N)^{1/2}.
\end{align*}
Using the stationarity of the zero-range process and a standard  large deviation principle,  by union bound,
\[{\bf P}^N_{\mu_\rho}  (B_N) \leq \frac{C}{\epsilon} e^{-C (\log N)^3}.\] 
Thus, the  first term in \eqref{eq:3_1} is bounded by $C N (\log N)^3 \ell$ for $N$ large enough, which finally yields 
\begin{equation}
\label{eq:5Jzr}
{\bf E}^N_{\mu_\rho} \Big[ \sup_{0 \leq t \leq T} \Big(  \sum_{x \in \Z} G_{\ell} \big(\tfrac{x}{N}\big)  \bar{\omega}_x (t) \Big)^2\Big]=O(N^{-40}\ell^2+N\ell (\log N)^3).
\end{equation}

\medskip

Putting together Equations \eqref{eq:1Jzr}, \eqref{eq:2Jzr}, \eqref{eq:3Jzr}, \eqref{eq:4Jzr}, \eqref{eq:5Jzr}, and by the elementary inequality $(\sum_{k=1}^n a_k)^2\leq n\sum_{k=1}^n a_k^2$,  we obtain,
\[{\bf E}^N_{\mu_\rho} \Big[\sup_{0 \leq t \leq T} \bar{J^{\rm ZR}_{-1,0}} (t)  ^2\Big] \leq C \Big( N^{-40}\ell^2+N (\log N)^3  \ell+ \frac{\sym N+N^{\gamma-1}}{\ell} + \frac{\sym N^2}{\ell^2} + \frac{N^{2\gamma-1}}{\ell}\Big).\]
We then prove the first bound in \eqref{eq:L2current} by taking $\ell=N^{1/3}$ if $\gamma \leq 4/3$ and $\sym=1$, and by taking $\ell=N^{\gamma-1}$ otherwise. For the second bound in \eqref{eq:L2current}, we simply use our assumption that $\gamma\leq 3/2$.
\end{proof}

\begin{remark}
\label{rmk:supremum}
Note that to bound \eqref{eq:3_3}, the main ingredient is a large deviation principle for the sum $\sum_{x \in \Z} G_{\ell} \big(\tfrac{x}{N}\big)  \bar{\omega}_x (t)$, and the time-uniform bound turns out to have an $O((\log N)^3)$ correction compared with the one without the supremum inside the expectation. This technique holds as soon as the quantity to estimate has exponentially decaying tails (in particular when a large deviations principle holds) and result will be used repeatedly in the following section. Since the argument will always be analogous, we will not always detail the time uniform estimate, and instead refer to the proof of Lemma  \ref{lem:current}. To be quite explicit, except for Martingales, for which Doob's inequality yields the wanted result, the scheme to obtain a time-uniform estimate will always be the following
\begin{enumerate}[1.]
\item Divide the time interval into a polynomial number (e.g. $1/\epsilon:=N^{100}$) of small time intervals $[t_i, t_i+1[$. By observing all the $\omega(t_i)$, one actually observes all the $\{\omega(t), \; 0\leq t\leq T\}$, unless two jumps occurred in one of the small time intervals.
\item Since the process in a box of size $K$ jumps at rate $O(KN^2)$, if $K=O(N^2)$, the latter happens with arbitrarily small probability, by letting $\epsilon$ be as small as wanted.
\item Then, the supremum over the $t_i$'s is obtained by stationarity and union bound, which makes us loose a factor $1/\epsilon$, which is not a problem if the probability that the relevant quantity is too large vanishes exponentially in $N$.
\end{enumerate}
\end{remark}

%
%
%
%
%
%
%
%
%

\section{Proof of Theorems \ref{thm:weak_asymFEP} and  \ref{thm:asymFEP}}
\label{sec:mappingproof}
In this section, we now prove Theorems  \ref{thm:weak_asymFEP} and  \ref{thm:asymFEP}, by using the  mapping defined in the previous section.

\subsection{Mapping estimates} 

We need to express the density fluctuation field $\mc{Y}^N_t (G) $ defined in \eqref{eq:YNt} for FEP through that of the zero-range process.  We keep the same notations as in the dynamical mapping defined in Section \ref{sec:Dynmap}, $ (X_y(t))$ is the sequence of successive empty sites in $\eta(t)$ seen from the tagged empty site $X_0(t)$, and $\omega=\Pi^\star[\eta]$ is the resulting constant rate zero-range process. Straightforward computations then yield  
\begin{align}
\label{trans1}
\mc{Y}^N_t (G) &= \frac{1}{\sqrt{N}}  \sum_{x \in \Z} \bar{\eta}_x (t) G \big(\tfrac{x}{N} - t v_N\big)\nonumber\\
&= \frac{1}{\sqrt{N}}  \sum_{y \in \Z} \bigg[ - \rho G \Big(\tfrac{X_y (t)}{N} - t v_N\Big)+\sum_{x=X_y(t)+1}^{X_{y+1} (t)-1} (1 - \rho) G \big(\tfrac{x}{N} - t v_N\big)  \bigg] \nonumber\\
&= \frac{1-\rho}{\sqrt{N}}  \sum_{y \in \Z} G \Big(\tfrac{X_y (t)}{N} - t v_N\Big) \bar{\omega}_y (t) + \frac{1-\rho}{\sqrt{N}}  \sum_{y \in \Z} \sum_{x=X_y(t)+1}^{X_{y+1}(t)-1} \Big[ G \big(\tfrac{x}{N} - t v_N\big) - G \big(\tfrac{X_y (t)}{N} - t v_N\big)\Big],
\end{align}
where according to \eqref{eq:defalpha},
\begin{equation}
\label{eq:omegabar}
\bar{\omega}_y (t):= \omega_y (t)-{\bf E}_{\mu_{\rho}}^N (\omega_0 (0))= \omega_y (t)-\frac{2\rho-1}{1-\rho}.
\end{equation}

\medskip

We first deal with the second term on the right side in \eqref{trans1}, and show that it vanishes in $L^1(\bb{P}_{\rho}^N)$ uniformly in time as  $N\to\infty$. In what follows, we define
\begin{equation}
\label{eq:DefyNvN}
y_N=y_N(T,\rho):=(1-\rho)TNv'_N, \qquad \qquad v'_N=v_N+N^{\gamma-1} \frac{2\rho-1}{\rho}=N^{\gamma-1}\frac{1-\rho}{\rho^2},
\end{equation}
we claim the following.

\begin{proposition}
\label{prop:error}
For any smooth function $G\in \mathcal{S}$ with compact support,
\begin{equation}
\label{eq:lemerror}
\lim_{N\to\infty} \E^N_\rho \Big[ \sup_{0 \leq t \leq T} \Big|  \frac{1}{\sqrt{N}}  \sum_{y \in \Z} \sum_{x=X_y(t)+1}^{X_{y+1}(t)-1} \Big[ G \big(\tfrac{x}{N} - t v_N\big) - G \big(\tfrac{X_y (t)}{N} - t v_N\big)\Big] \Big| \Big]=0.
\end{equation}
\end{proposition}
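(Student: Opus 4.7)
The plan is to use a Lipschitz estimate on $G$ to dominate the quantity inside the expectation by a sum of squared cluster sizes in a moving window, then invoke Proposition \ref{prop:origindistortion} to pass from the FEP stationary state to the translation invariant zero-range state $\mu_\rho$, and finally to upgrade the resulting fixed-time bound to a time-uniform one via the discretization scheme of Remark \ref{rmk:supremum}. Since $G \in \mathcal{S}$ is compactly supported, say in $[-K,K]$, the mean value theorem yields, for $x \in [X_y(t)+1, X_{y+1}(t)-1]$, the bound $|G(x/N - tv_N) - G(X_y(t)/N - tv_N)| \leq \|G'\|_\infty (x - X_y(t))/N$; moreover this difference vanishes unless the cluster interval $[X_y(t)/N, X_{y+1}(t)/N]$ intersects the shifted support $[tv_N - K, tv_N + K]$. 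Letting $\chi_y(t)$ denote the indicator of this intersection and summing over $x$, which produces an arithmetic sum equal to $(\omega_y(t)+1)(\omega_y(t)+2)/2$, the quantity inside the expectation in \eqref{eq:lemerror} is dominated pathwise by
\[
\frac{C_G}{N^{3/2}} \sum_{y \in \Z} (\omega_y(t) + 1)^2 \chi_y(t),
\]
with $C_G$ depending only on $G$ and $\rho$. Since $X_{y+1}(t)-X_y(t) \geq 2$, the number of $y$ with $\chi_y(t) = 1$ is deterministically $O(N)$, up to a correction from possible anomalously large clusters spanning the window.

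Next I would transfer the problem to the translation invariant zero-range measure $\mu_\rho$. Using Proposition \ref{prop:origindistortion} together with the bijection \eqref{eq:mappingdistributions}, one can couple $\omega$ with another zero-range process $\xi$ started from $\mu_\rho$ such that discrepancies remain concentrated at a single site at all times; since $\omega_0(0)$ has finite moments of every order under $\widehat{\mu}_\rho$, each such discrepancy modifies the sum above by at most a polynomial in cluster sizes with finite expectation, contributing an error of order $O(N^{-3/2})$ which is harmless. It therefore suffices to estimate the analogous quantity under $\mathbf{E}^N_{\mu_\rho}$. Under $\mu_\rho$ the $\xi_y(t)$ are, at each fixed time, i.i.d. geometric with parameter $a(\rho) < 1$ and hence finite moments of all orders, and combining the pathwise $O(N)$ bound on the number of relevant $y$'s with $\mathbf{E}_{\mu_\rho}[(\xi_0+1)^2] < \infty$ yields a fixed-time bound of order $O(N^{-1/2})$ on the expectation, with sub-exponential concentration around this mean; the contribution from anomalously large clusters with $\xi_y > (\log N)^2$ is negligible by the geometric tails of $\mu_\rho$.

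The main technical step, and the only genuine obstacle, is upgrading this fixed-time estimate to the uniform-in-time bound required by the proposition. I would follow the three-step scheme recalled in Remark \ref{rmk:supremum}: discretize $[0,T]$ into $\epsilon^{-1} = N^{100}$ subintervals of length $\epsilon$; within each subinterval the number of zero-range jumps in a macroscopic window is Poisson of mean $O(N^{-97})$, so with overwhelming probability no jump occurs there and the sum is frozen, while the exceptional event where many jumps occur is controlled by a crude $L^4$ estimate analogous to \eqref{eq:3_2}. A union bound over the $N^{100}$ grid points $t_i$, combined with the exponential concentration at each $t_i$, then loses only polylogarithmic factors and therefore preserves the $o(1)$ rate. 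The moving support in $t$ is innocuous because the shift between consecutive grid points is $v_N \epsilon = O(N^{\gamma-1-100}) \ll 1/N$, so the windows at $t_i$ and $t_{i+1}$ differ by at most one site; the simultaneous tracking of this moving support, the randomly evolving positions $X_y(t)$, and the time supremum is the delicate point, but translation invariance of $\mu_\rho$ away from the origin together with the discretization argument makes everything tractable.
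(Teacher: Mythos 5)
Your opening reduction is the same as the paper's: Taylor expansion of $G$, summation of the arithmetic progression inside each cluster, and domination of the field by $N^{-3/2}\sum_y(\omega_y(t)+1)^2$ restricted to the clusters meeting the (shifted) support of $G$; the transfer to $\mu_\rho$ via Proposition \ref{prop:origindistortion} and the time-discretization of Remark \ref{rmk:supremum} are also the paper's tools. The genuine gap is in the middle step, where you pass from ``the number of $y$ with $\chi_y(t)=1$ is deterministically $O(N)$'' to a fixed-time bound $O(N^{-1/2})$ by pairing this cardinality with $\mathbf{E}_{\mu_\rho}[(\xi_0+1)^2]<\infty$. The set $S_t=\{y:\chi_y(t)=1\}$ is a random \emph{interval of indices whose location is random and a priori unbounded}: which index $y$ sits near the support of $G$ is determined by $X_y(t)=\sum_{y'\le y}[\omega_{y'-1}(t)+2]-J^{\rm ZR}_{-1,0}(t)+X_0(0)$, so the indicator $\chi_y(t)$ is correlated with the values $\omega_{y'}(t)$ (the boundary clusters of $S_t$ are even size-biased, exactly the phenomenon behind \eqref{eq:centralcluster}), and $\E[\sum_{y\in\Z}(\omega_y+1)^2\chi_y(t)]$ cannot be evaluated as $|S_t|\cdot\E[(\omega_0+1)^2]$. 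To make the sum over $y\in\Z$ converge you must show that $\P(\inf_{0\le t\le T}|X_y(t)/N-tv_N|\le A)$ decays summably for $y$ far from the deterministic center $y_N=(1-\rho)TNv_N'$ — this localization is the bulk of the paper's proof (the sets $\Gamma_N$, $B_N$ and the estimates $p_y,q_y,r_y$ in \eqref{eq:p1estimate}--\eqref{eq:p3estimate}), and it is precisely where Lemma \ref{lem:current} on $\sup_t|\bar{J^{\rm ZR}_{-1,0}}(t)|$ and the large deviations for $\sum_{y'}\bar\omega_{y'}(t)$ and $X_0(0)$ enter. Your proposal never mentions the current through the origin or the fluctuation of the tagged empty site; the only displacement you address is the deterministic drift $v_N\epsilon$ between grid points, which is not the difficulty.

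A secondary, related imprecision: the dismissal of ``anomalously large clusters'' by the geometric tails of $\mu_\rho$ again presupposes that you know \emph{which} deterministic set of indices to take a union bound over, which is the same missing localization. (One could instead try to exploit that the \emph{spatial} window $[N(tv_N-K),N(tv_N+K)]$ is deterministic and bound the maximal particle run in a deterministic region of size $O(N^{3/2})$, but that is a different argument from the one you wrote and would still need to be carried out.) Everything after this point in your proposal — the Poisson bound on jumps per microscopic time slice, the crude $L^4$ moment, the union bound over $N^{100}$ grid points — matches the paper's Remark \ref{rmk:supremum} and is fine, but it upgrades a fixed-time estimate that has not actually been established.
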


\begin{proof}
Denote
\[M_G (y,t) = \sup_{X_y (t) \leq x \leq X_{y+1} (t)} |G^\prime \big(\tfrac{x}{N} - t v_N\big)|.\]
By Taylor's expansion, since for any $y$, the number of terms in the sum over $x$ is $\omega_y (t)+1$ (cf. \eqref{eq:omegaXyt})
\begin{multline}
\label{eq:lemerror1}
\E^N_\rho \Big[  \sup_{0 \leq t \leq T} \Big|  \frac{1}{\sqrt{N}}  \sum_{y \in \Z} \sum_{x=X_y(t)+1}^{X_{y+1}(t)-1} \Big[ G \big(\tfrac{x}{N} - t v_N\big) - G \big(\tfrac{X_y (t)}{N} - t v_N\big)\Big] \Big| \Big] \\
\leq  \E^N_\rho \Big[  \sup_{0 \leq t \leq T} \Big|  \frac{1}{N^{3/2}}  \sum_{y \in \Z} (\omega_y (t)+1)^2 M_G (y,t) \Big|\Big].
\end{multline}
The next step is to restrict the support of the sum over $y$. Recall \eqref{eq:DefyNvN} and define
\[\Gamma_N:=\{y\in \Z\; :\; |y-y_N|< N^{4/3} \}, \]
so that 
\begin{multline}
\label{trans2}
\E^N_\rho \Big[ \sup_{0 \leq t \leq T} \Big|  \frac{1}{N^{3/2}}  \sum_{y \in \Z} (\omega_y (t)+1)^2 M_G (y,t) \Big| \Big] \\
\leq  \E^N_\rho \Big[  \sup_{0 \leq t \leq T} \frac{1}{N^{3/2}}  \sum_{y \in \Gamma_N} (\omega_y (t)+1)^2 M_G (y,t) \Big] +  \E^N_\rho \Big[   \sup_{0 \leq t \leq T} \frac{1}{N^{3/2}}   \sum_{y \in\Gamma_N^c} (\omega_y (t)+1)^2 M_G (y,t)  \Big]\\
\leq \frac{  \|G^\prime\|_\infty}{N^{3/2}} \E^N_\rho \Big[  \sup_{0 \leq t \leq T}  \sum_{y \in \Gamma_N} (\omega_y (t)+1)^2 \Big] 
+\frac{1}{N^{3/2}} \sum_{y \in \Gamma_N^c}\E^N_\rho \Big[ \sup_{0 \leq t \leq T} (\omega_y (t)+1)^8 \Big]^{1/4} \E^N_\rho \Big[   \sup_{0 \leq t \leq T} M_G (y,t)^{4/3} \Big]^{3/4},
\end{multline}
by H{\" o}lder inequality.  Note that 
\[\E^N_\rho \Big[   \sum_{y \in \Gamma_N} (\omega_y (t)+1)^2 \Big] \leq C N^{4/3}\]
and that  the sum inside the expectation satisfies a large deviations principle. Following the same steps as in the proof of Lemma \ref{eq:L2current} (see Remark \ref{rmk:supremum}), we obtain that the first term on the right-hand side in \eqref{trans2} is $ O\big(N^{4/3-3/2}(\log N)^3\big)$ and therefore vanishes as $N\to\infty$. We now consider the second sum.  First observe that
\begin{equation}\label{omega_sup}
\E^N_\rho \Big[  \sup_{0 \leq t \leq T} (\omega_0 (t)+1)^8 \Big] \leq \E^N_\rho \Big[  \sup_{0 \leq t \leq T}  \sum_{|y| \leq (\log N)^2} (\omega_y (t)+1)^8 \Big] \leq C (\log N)^8,
\end{equation}
where in the second inequality we use Remark \ref{rmk:supremum} again. Thus, the second term on the right-hand side in \eqref{trans2} is bounded by
\[\frac{C (\log N)^2}{N^{3/2}} \sum_{y \in \Gamma_N^c}  \E^N_\rho \Big[   \sup_{0 \leq t \leq T} M_G (y,t)^{4/3} \Big]^{3/4}.\]
Denote by $ \nota{A:=A(G)>0}$ the size of $G$'s support, meaning that $G$ vanishes on  the set $\R\setminus [-A,A]$. Since  $M_G (y,t) = 0$ if $|\tfrac{X_y (t)}{N} - t v_N| > A$ and $|\tfrac{X_{y+1} (t)}{N} - t v_N| > A$, using \eqref{eq:lemerror1} and a union bound, we obtain that the expectation in \eqref{eq:lemerror} is  bounded from above by 
\begin{equation}
\label{eq:errormain}
\frac{C(\rho,G)(\log N)^2}{N^{3/2}} \sum_{y\mbox{ \tiny or }y-1 \in \Gamma_N^c}  \P^N_\rho \big( \inf_{0 \leq t \leq T} |\tfrac{X_y (t)}{N} - t v_N| \leq A \big)^{3/4}+O(N^{-1/6}(\log N)^3) 
\end{equation}
We now need to show that for any $y\notin \Gamma_N$, $\tfrac{X_y(t)}{N} - v_Nt$ is  w.h.p. outside of the support of $G$.

\medskip

Fix $y\in \Z$, using \eqref{eq:omegaXyt} and \eqref{eq:X0t}, we obtain
\begin{equation}
\label{eq:DefXy}
X_y(t) = \sum_{y^\prime = 1}^y [\omega_{y'-1} (t) + 2]  + X_0 (t) =  \sum_{y^\prime = 1}^y [\omega_{y'-1} (t) + 2]  -J^{\rm ZR}_{-1,0} (t) + X_0 (0).
\end{equation}
Recall from \eqref{eq:DefyNvN} the definition of $v_N'$.. Using \eqref{eq:averagecurrent} and \eqref{eq:omegabar}, we therefore write,  
\begin{align*}
	\tfrac{X_y(t)}{N} - v_Nt&=\frac{1}{N}\cro{ \sum_{y^\prime = 1}^y \omega_{y'-1} (t) + 2y -J^{\rm ZR}_{-1,0} (t) + X_0 (0)}-v_Nt \\
	&=\frac{1}{N}\cro{ \sum_{y^\prime = 1}^y \overline{\omega}_{y'-1} (t) -\overline{J^{\rm ZR}_{-1,0}} (t)}-t v'_N +\frac{2y+X_0(0)}{N}+\frac{y}{N}\frac{2\rho-1}{1-\rho}\\
	&=\frac{1}{N}\cro{ \sum_{y^\prime = 1}^y \overline{\omega}_{y'-1} (t) -\overline{J^{\rm ZR}_{-1,0}} (t)+X_0(0)+\frac{y-y_N}{(1-\rho)}}. 
\end{align*}
For $y\in\Gamma_N^c$, shorten $\nota{\bar y:= |y-y_N|/(1-\rho)}\geq N^{4/3}$. Since $\rho\in [0,1]$, in order to have  
\[\inf_{0 \leq t \leq T} \big| \tfrac{X_y(t)}{N} - v_Nt \big|\leq A,\]
we must have either 
\begin{equation}
	\label{eq:trio}
	X_0(0)\geq \frac{\bar y-NA}{3},\quad \sup_{0 \leq t \leq T} \abs{\sum_{y^\prime = 1}^y \overline{\omega}_{y'-1} (t)}\geq \frac{\bar y-NA}{3} \quad \mbox{ or }\quad \sup_{0 \leq t \leq T}\abs{\overline{J^{\rm ZR}_{-1,0}} (t)}\geq \frac{\bar y-NA}{3}.
\end{equation}
By a standard large deviations estimate, the first event occurs with exponentially small probability under $\bb{P}_\rho^N$, which we denote by  
\begin{equation}
\label{eq:p1estimate}
p_y:= \P_\rho^N \pa{X_0(0)\geq \frac{\bar y-NA}{3}}=O(e^{-c(\bar y -NA)})
\end{equation}
for some positive constant $c$ independant of $y$. In what follows, the constant $c$ can change from line to line. Under the product geometric distribution $\mu_\rho$ defined in \eqref{eq:Defmurho},  a large deviations estimate also yields that as $y\to + \infty$
\begin{equation}
\label{eq:LDEmu}
\mu_\rho\pa{\frac{1}{y}\sum_{y'=1}^y\overline{\omega}_{y'-1}\geq x}=O(e^{-cyx^2} ),
\end{equation}
for some positive constant $c$. Note that because of the non-stationarity at the origin, $\omega(t)$ is not a stationary Facilitated Zero-Range process. However, thanks to Proposition \ref{prop:origindistortion}, the equilibrium large deviations estimate \eqref{eq:LDEmu} yields 
\begin{multline}
\label{eq:qy}
q_y:=\bb{P}_{\rho}^N\pa{\sup_{0 \leq t \leq T} \abs{\sum_{y^\prime = 1}^y \overline{\omega}_{y'-1} (t)}\geq \frac{\bar y-NA}{3}}\\
={\bf P}_{\mu_\rho}^N\pa{\sup_{0 \leq t \leq T} \abs{\sum_{y^\prime = 1}^y \overline{\omega}_{y'-1} (t)} \geq \frac{\bar y-NA}{3}}
=O\big(N^4 y^6 e^{-c(\bar y -NA)^2/|y|} +  y^{-4}\big),
\end{multline}
where in the last step, we divide the time interval $[0,T]$ into $N^{4}y^6$ small intervals and use the same argument as in Lemma \ref{lem:current} (cf. Remark \ref{rmk:supremum}).
We split the exponent depending on whether $y\in B_N:=\Gamma_N^c \cap \{-2y_N,\dots,2y_N\}$ or $y\in \Gamma_N^c \setminus B_N$. If $y\in B_N$ 
 \[\frac{(\bar y -NA)^2}{|y|}\geq \frac{( N^{4/3}-NA)^2}{2y_N}\geq cN^{8/3-\gamma}\geq c N\]
 since $\gamma$ is assumed to be less than $3/2$. If instead, $y\in \Gamma_N^c \setminus B_N$, we have $\bar y/|y|=1-y_N/y\geq 1/2$, so that the probability above is of order $O(e^{-c \bar y})$. Putting those to statement together yields 
\begin{equation}
\label{eq:p2estimate}
q_y\leq N^{4+6\gamma} e^{-cN}{\bf 1}_{\{y\in B_N\}}+ N^4 y^6 e^{-c\bar{y}}{\bf 1}_{\{y\in \Gamma_N^c\setminus B_N\}} + y^{-4}.
\end{equation}
Finally, to estimate the probability of the last event in \eqref{eq:trio}, we use Lemma \ref{lem:current} and Chebychev's inequality, to get
\begin{equation}
\label{eq:p3estimate}
r_y:=\bb{P}_{\rho}^N\pa{\sup_{0 \leq t \leq T} \Big| \overline{J^{\rm ZR}_{-1,0}} (t) \Big| \geq\frac{\bar y-NA}{3} } \leq \frac{c N^{3/2} (\log N)^3}{(\bar y -NA)^2}.
\end{equation}
Since
\begin{equation}
\label{eq:calcPy}
\P^N_\rho \big(\inf_{0 \leq t \leq T} |\tfrac{X_y (t)}{N} - t v_N| \leq A \big)\leq p_y+q_y+r_y,
\end{equation}
putting \eqref{eq:p1estimate}, \eqref{eq:p2estimate} and \eqref{eq:p3estimate}, we obtain that  for any $y\in \Gamma_N^c$
\begin{equation*}
\P^N_\rho \big(\inf_{0 \leq t \leq T} |\tfrac{X_y (t)}{N} - t v_N| \leq A \big)^{3/4}\leq \frac{c N^{9/8} (\log N)^{9/4}}{\bar y^{3/2}}.
\end{equation*}
Summing this identity over $y\in \Gamma_N^c$ and multiplying by $ N^{-3/2}$ yields that the first term in \eqref{eq:errormain} vanishes as $N\to\infty$ as wanted, which proves the Lemma.
\end{proof}

Now, we deal with the first term on the right side of \eqref{trans1}, in which we want to replace 
\begin{equation}
\label{eq:repXy}
X_y (t) \qquad \mbox{ by } \qquad \frac{y}{1-\rho} - tN^{\gamma} \frac{2\rho-1}{\rho},
\end{equation}
where  the second term represents the mean displacement of a tagged empty site in a time $t$. Recall from \eqref{eq:DefyNvN} that
\[v'_N=v_N+N^{\gamma-1} \frac{2\rho-1}{\rho}=N^{\gamma-1}\frac{1-\rho}{\rho^2},\]
which represents the macroscopic velocity of a given particle \emph{relative to the position of a tagged empty site.} 
Lemma \ref{lem:replaceX_y} below justifies replacement \eqref{eq:repXy}, and proves, together with \eqref{trans1} and Proposition \ref{prop:error}, that  for any $\varepsilon > 0$,
\begin{align}
\label{trans3}
\limsup_{N\to\infty}\P_{\rho}^N \Big( \sup_{0 \leq t \leq T} \Big| \mc{Y}^N_t (G) - \frac{1-\rho}{\sqrt{N}}  \sum_{y \in \Z} G \Big(\frac{y}{N(1-\rho)} - t v'_N\Big) \bar{\omega}_y (t) \Big| > \varepsilon \Big) =0.
\end{align}

We now state and prove the replacement \eqref{eq:repXy}.
\begin{lemma}
\label{lem:replaceX_y} 
For any test function $G$ with compact support,  for any $\varepsilon > 0$,
\begin{equation}
	\label{eqn:replacementXy}
	\lim_{N\to\infty} \P_{\rho}^N \Big[ \sup_{0 \leq t \leq T} \Big| \frac{1}{\sqrt{N}} \sum_{y \in \Z} \bar{\omega}_y (t) \big[G \big(\tfrac{X_y(t)}{N} - v_Nt\big) - G \big(\tfrac{y}{N(1-\rho)} -  t v'_N\big) \big] \Big| > \varepsilon \Big]= 0.
\end{equation}

\end{lemma}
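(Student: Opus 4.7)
The approach is to Taylor-expand the difference $G(X_y(t)/N - v_N t) - G(y/(N(1-\rho)) - tv'_N)$ to second order and bound each resulting contribution by second-moment estimates exploiting the product structure of the stationary zero-range measure $\mu_\rho$. First, combining \eqref{eq:DefXy}, \eqref{eq:averagecurrent} and the identities $v'_N - v_N = N^{\gamma-1}(2\rho-1)/\rho$ and $\alpha(\rho)+2 = 1/(1-\rho)$, a direct computation yields
\[\frac{X_y(t)}{N} - v_N t - \frac{y}{N(1-\rho)} + tv'_N \;=\; \frac{1}{N}\big[S_y(t) - \bar{J^{\rm ZR}_{-1,0}}(t) + X_0(0)\big] \;=:\; \Delta_y(t),\]
with $S_y(t) := \sum_{y'=1}^y \bar{\omega}_{y'-1}(t)$. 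By Proposition~\ref{prop:origindistortion}, I may also assume throughout that $\omega$ starts from the product measure $\mu_\rho$ rather than $\widehat{\mu}_\rho$, the bounded number of discrepancies being negligible after the $1/\sqrt N$ normalisation.

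Writing $z_y(t) := y/(N(1-\rho)) - tv'_N$ and applying Taylor's theorem gives the decomposition $T_1+T_2$, with
\[T_1 := \tfrac{1}{\sqrt N}\sum_y \bar{\omega}_y(t) G'(z_y(t))\Delta_y(t), \qquad T_2 := \tfrac{1}{2\sqrt N}\sum_y \bar{\omega}_y(t) G''(\zeta_y(t))\Delta_y(t)^2,\]
where $\zeta_y(t)$ is intermediate. Compact support of $G$ restricts each sum to $O(N)$ indices, excursions outside this deterministic window being controlled exactly as in Proposition~\ref{prop:error}. Decompose $N\Delta_y = S_y + R$ with $R(t) := X_0(0) - \bar{J^{\rm ZR}_{-1,0}}(t)$. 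In $T_1$, the $R$-contribution factors out, and combining Lemma~\ref{lem:current} (giving $\sup_t \vert R\vert = O(N^{3/4}(\log N)^{3/2})$ in $L^2$) with the elementary bound $\sum_y \bar{\omega}_y G'(z_y) = O(\sqrt N)$ in $L^2$ yields an $L^1$-order $O(N^{-1/4}(\log N)^{3/2})$, which vanishes. The $S_y/N$-part of $T_1$ is handled by summation-by-parts in $y$, writing $\sum_y \bar{\omega}_y G'(z_y) S_y = \sum_k \bar{\omega}_k b_k$ with $b_k := \sum_{y>k} \bar{\omega}_y G'(z_y)$; the crucial fact is that under $\mu_\rho$, $\bar{\omega}_k$ is independent of $b_k$, and moreover for $k \neq k'$ the largest-index factor in $\bar{\omega}_k\bar{\omega}_{k'}b_k b_{k'}$ is independent of all the others, so off-diagonal covariances vanish. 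The diagonal yields $\sum_k \sigma^2\E[b_k^2] = O(N^{1+\gamma})$, hence the contribution is of order $O(N^{(\gamma-2)/2})$, vanishing for $\gamma \leq 3/2$.

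The main obstacle is the second-order term $T_2$: a naive Cauchy-Schwarz bound only yields $O(1)$ at $\gamma = 3/2$. Replacing $G''(\zeta_y)$ by $G''(z_y)$ costs a negligible third-order remainder. Then expanding $\Delta_y^2 = (S_y+R)^2/N^2$ into three pieces, the cross and pure-$R$ pieces are handled by the same Cauchy-Schwarz estimates as above, giving bounds of order $O(N^{-1/2}(\log N)^3)$. The dominant piece is
\[\frac{1}{2 N^{5/2}}\sum_y \bar{\omega}_y(t) G''(z_y(t))\, S_y(t)^2.\]
Here again the key observation is that $S_y$ never involves $\bar{\omega}_y$ itself, so under the product measure the $\bar{\omega}_{y_{\max}}$ factor in $\bar{\omega}_{y_1}\bar{\omega}_{y_2}S_{y_1}^2 S_{y_2}^2$ is independent of all other factors and centered, killing all off-diagonal second-moment contributions. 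The diagonal gives $\sum_y G''(z_y)^2\,\E[\bar{\omega}_y^2]\,\E[S_y^4] = O(|\text{supp}|\cdot y_N^2) = O(N^{1+2\gamma})$, yielding an $L^2$ norm of order $O(N^{\gamma-2})$, which vanishes for $\gamma \leq 3/2$.

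Time-uniformity of the above fixed-time estimates is finally obtained via the scheme of Remark~\ref{rmk:supremum}: partition $[0,T]$ at mesh $\epsilon_N = N^{-100}$, rule out multiple jumps in any small interval with the standard Poisson bound, and then combine a union bound with the exponential concentration of each quantity at a fixed time. The main difficulty concentrates in the second-order term $T_2$ at $\gamma = 3/2$, whose control is saved only by the fine use of independence of $\bar{\omega}_y$ from the past partial sum $S_y$ under $\mu_\rho$.
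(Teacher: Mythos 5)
Your strategy is sound and genuinely different from the paper's. The paper proves this lemma by splitting the sum into the deterministic window $\Gamma_N(G)$ and its complement (Lemmas \ref{lem:replaceX_y2} and \ref{lem:replaceX_y1}), and inside the window runs a two-scale argument: it introduces the block average $\bar{\omega}^\ell_y$ and the commutator $\Delta_y^\ell(G)$, trades the $O(1)$ variable $\bar\omega_y$ for the $O(\ell^{-1/2})$ block average paired with the full difference of test functions (itself controlled via Lemma \ref{lem:current} and the CLT scaling of $\sum_{y'\leq y}\bar\omega_{y'-1}$), and optimizes $\ell=N^{5/8}$. You instead Taylor-expand $G$ to second order and compute $L^2$ norms exactly, using that under the product measure $\mu_\rho$ (legitimate at each fixed time by stationarity, once Proposition \ref{prop:origindistortion} has been invoked) an isolated centered factor kills every off-diagonal term. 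I checked your identity for $\Delta_y(t)$, the vanishing of the off-diagonal contributions, and the resulting orders ($N^{(\gamma-2)/2}$ for the first-order partial-sum term, $N^{\gamma-2}$ for the dominant quadratic term); they are correct, and your observation that naive Cauchy--Schwarz only gives $O(1)$ on $T_2$ at $\gamma=3/2$, so that the exact cancellation is genuinely needed, is also right. One small slip: in $T_1$ the isolated index in $\bar\omega_k\bar\omega_{k'}b_kb_{k'}$ is the \emph{smallest} one $k$, not the largest (the largest index of $b_k$ and $b_{k'}$ may coincide); the conclusion is unaffected.

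What the paper's route buys is robustness: it needs only variance bounds for block averages and second moments of time-suprema, all within the literal scope of Lemma \ref{lem:current} and Remark \ref{rmk:supremum}. Your route is more direct and avoids the mesoscopic scale, but two points are thinner than your write-up suggests and must be filled in. First, Remark \ref{rmk:supremum} is stated for quantities with exponentially decaying tails; your $T_1$ and $T_2$ are degree-$2$ and degree-$3$ polynomials in the geometric variables, so the super-polynomially small tail needed for the union bound over $N^{100}$ mesh points has to be justified separately, e.g.\ by $2p$-th moment bounds with $p\sim\log N$ (this does work, but it is not automatic from the remark). Second, the pure-$R^2$ and cross terms of $T_2$ and the third-order Taylor remainder require moments of $\sup_{0\leq t\leq T}|\bar{J^{\rm ZR}_{-1,0}}(t)|$ beyond the second moment that Lemma \ref{lem:current} provides; you need either to rerun that proof for fourth moments or to split on a high-probability event for the accompanying linear factor. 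Both gaps are fillable with arguments in the spirit of the paper, so I regard the proposal as a correct alternative proof modulo these details.
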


Note that the sum inside the probability can involve an infinite number of non zero contributions even for a function $G$ with compact support. To overcome this difficulty, fix a smooth compactly supported test function $G\in\mathcal{S}$, and as before let $ \nota{A>0}$ be the size of its support. Recall $y_N=(1-\rho) T Nv'_N$. Define 
 \[\Gamma_N(G):=\{y_N-2AN(1-\rho),\dots, y_N+ 2AN(1-\rho)\}.\]
Then, the above lemma follows immediately from the following two results.

\begin{lemma}
\label{lem:replaceX_y2}
For any test function $G$ with compact support,
\begin{equation}\label{eqn:repXy2}
\lim_{N\to\infty} \E_{\rho}^N \Big[ \sup_{0 \leq t \leq T} \Big| \frac{1}{\sqrt{N}} \sum_{y \in \Gamma_N(G)} \bar{\omega}_y (t) \big[G \big(\tfrac{X_y(t)}{N} - v_Nt\big) - G \big(\tfrac{y}{N(1-\rho)} -  t v'_N\big) \big] \Big|\Big]= 0.
\end{equation}
\end{lemma}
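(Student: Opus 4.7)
The plan is to exploit the explicit formula~\eqref{eq:DefXy} to expand the difference inside $G$ as a small perturbation, and then Taylor expand and estimate each piece using the product structure of the stationary ZRP measure $\mu_\rho$ and the current estimate from Lemma~\ref{lem:current}. First, I would use~\eqref{eq:DefXy} together with the identities $v'_N-v_N=N^{\gamma-1}(2\rho-1)/\rho$ and $\E_{\mu_\rho}(\omega_0)=(2\rho-1)/(1-\rho)$ (cf.~\eqref{eq:DefyNvN}) to rewrite the relevant displacement as
\[
D_y(t) := \tfrac{X_y(t)}{N} - tv_N - \tfrac{y}{N(1-\rho)} + tv'_N = \tfrac{1}{N}\Big[\sum_{y'=1}^y \overline{\omega}_{y'-1}(t) - \overline{J^{\rm ZR}_{-1,0}}(t) + X_0(0)\Big].
\]
By the mean value theorem plus a first order Taylor expansion of $G'$ around the deterministic point $b_y(t) := y/(N(1-\rho)) - tv'_N$, I would then write $G(X_y(t)/N - tv_N) - G(b_y(t)) = H_y(t)\,D_y(t) + R_y(t)$, with $H_y(t) := G'(b_y(t))$ deterministic and $|R_y(t)| \leq \tfrac{1}{2}\|G''\|_\infty D_y(t)^2$.

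Using Proposition~\ref{prop:origindistortion} I would reduce to the case $\omega(0) \sim \mu_\rho$, incurring only an $O(1)$ discrepancy. The principal linear term
\[
\tfrac{1}{\sqrt N}\sum_{y\in\Gamma_N(G)} H_y(t)\,\overline{\omega}_y(t)\,D_y(t)
\]
then splits into three pieces (A), (B), (C) corresponding to the three summands in $N D_y$. For (B) and (C), Cauchy--Schwarz combined with Lemma~\ref{lem:current} ($\E\,\overline{J^{\rm ZR}_{-1,0}}(t)^2 = O(N^{3/2}\log^3 N)$), $\E X_0(0)^2 = O(1)$, and $\E[(\sum_y H_y \overline{\omega}_y)^2] = O(N)$ (from the i.i.d.\ product structure under $\mu_\rho$) yield $\E|\text{(B)}| = O(N^{-1/4}\log^{3/2} N)$ and $\E|\text{(C)}| = O(N^{-1})$. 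For (A), I would compute $\E[\text{(A)}^2]$ directly: the four-point expectation $\E[\overline{\omega}_{y_1}\overline{\omega}_{y_2}\overline{\omega}_{y'_1-1}\overline{\omega}_{y'_2-1}]$ under $\mu_\rho$ vanishes unless the four indices pair, and the constraints $y'_i \leq y_i$ ensure $y'_i - 1 < y_i$, ruling out every pairing except $y_1 = y_2 = y$ and $y'_1 = y'_2 = y' \leq y$, giving $\E[\text{(A)}^2] \leq C N^{-3}\sum_{y\in\Gamma_N(G)} |y| = O(N^{\gamma-2}) \to 0$ since $\gamma \leq 3/2 < 2$.

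The hard part will be controlling the Taylor remainder $N^{-1/2}\sum_y \overline{\omega}_y R_y$: a naive bound via $|\overline{\omega}_y|\,D_y^2$ discards the centering of $\overline{\omega}_y$ and fails when $\gamma = 3/2$. To overcome this, I would compute the $L^2$ norm of the remainder directly, expanding $D_y^2 = N^{-2}\big(\sum_{y'=1}^y \overline{\omega}_{y'-1} - \overline{J^{\rm ZR}_{-1,0}} + X_0(0)\big)^2$ into its six pairwise terms. The crucial structural input is that $\overline{\omega}_y(t)$ is independent of $\sum_{y'=1}^y \overline{\omega}_{y'-1}(t)$ under $\mu_\rho$, since the site $y$ never appears in the partial sum; this forces many of the resulting six- and eight-point cross expectations to vanish. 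What remains is estimated using Lemma~\ref{lem:current} and the constraint $\gamma \leq 3/2$, analogously to the treatment of~(A).

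Finally, to produce a bound uniform in $t \in [0,T]$, I would apply the discretization strategy of Remark~\ref{rmk:supremum}: observe the process at a polynomial grid $t_i = iT/N^{100}$, control the event that two or more particle jumps occur in a bounded window during some $[t_i, t_{i+1}]$ via a Poisson bound, and then handle the supremum over the grid $\{t_i\}$ by stationarity, a union bound, and exponential (large deviations) tail estimates on $\sum_y H_y(t_i)\,\overline{\omega}_y(t_i)$ and on the other random quantities appearing above, at the cost of only $\log^{O(1)} N$ corrections.
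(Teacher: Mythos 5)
Your route is genuinely different from the paper's: the paper never Taylor-expands $G$ beyond first order, but instead performs a summation by parts, replacing $\bar{\omega}_y(t)$ by the block average $\bar{\omega}^\ell_y(t)$ (which is $O(\ell^{-1/2}\log^3\ell)$ in $L^2$) at the price of a local error $\Delta_y^\ell(G)$ controlled by the fluctuations of $X_y(t)-X_{y+z}(t)$ over a window of size $\ell$; choosing $\ell=N^{5/8}$ closes the argument using only \emph{second} moments of the current. Your analysis of the linear term is essentially sound: the pairing argument for (A) is correct, up to the observation that when $\gamma\leq 1$ the window $\Gamma_N(G)$ contains negative $y$, for which the partial sum $\sum_{y'=1}^{y}\overline{\omega}_{y'-1}(t)=-\sum_{z=y}^{-1}\overline{\omega}_z(t)$ \emph{does} contain the site $y$ itself, so the pairing $y_1=z_1$, $y_2=z_2$ survives and contributes an extra (still harmless) $O(N^{-1})$; and (B), (C) go through by Cauchy--Schwarz and Lemma \ref{lem:current}.

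The genuine gap is in the remainder. As you have set it up, $R_y(t)=\tfrac12 G''(\xi_y)D_y(t)^2$ for a \emph{random} intermediate point $\xi_y$, so $R_y$ is not a polynomial in the field and the cancellation you invoke (independence of $\overline{\omega}_y(t)$ from $\sum_{y'=1}^{y}\overline{\omega}_{y'-1}(t)$, vanishing six- and eight-point cross-moments) cannot be extracted from it: the only usable bound is $|R_y|\leq\tfrac12\|G''\|_\infty D_y^2$, whose absolute value destroys the centering of $\overline{\omega}_y$ and returns you to the naive estimate --- which, note, is $O(\log^3 N)$ for \emph{every} $\gamma$ in the weakly asymmetric case, not only at $\gamma=3/2$, because $\E[\sup_t\overline{J^{\rm ZR}_{-1,0}}(t)^2]=O(N^{3/2}\log^3N)$ independently of $\gamma$. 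To salvage your route you must expand to second order with the deterministic coefficient $G''(b_y(t))$ and control a cubic Lagrange remainder; but then both the explicit quadratic term (through its $\overline{J^{\rm ZR}_{-1,0}}(t)^2$ and mixed pieces) and the cubic remainder require third and fourth moments of $\sup_t|\overline{J^{\rm ZR}_{-1,0}}(t)|$, which Lemma \ref{lem:current} does not supply. Such bounds are plausibly provable by the truncation and large-deviation scheme of Remark \ref{rmk:supremum}, but they are an additional non-trivial input absent from your sketch. The paper's block-averaging device is precisely what avoids ever having to square the current.
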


\begin{lemma}
\label{lem:replaceX_y1}
For any test function $G$ with compact support,
\begin{equation}
R_{N,T}(G):= \sup_{0 \leq t \leq T} \Big| \frac{1}{\sqrt{N}} \sum_{y \notin \Gamma_N(G)} \bar{\omega}_y (t) \big[G \big(\tfrac{X_y(t)}{N} - v_Nt\big) - G \big(\tfrac{y}{N(1-\rho)} -  t v'_N\big) \big] \Big|
\end{equation}
vanishes in probability as $N\to\infty$. 
\end{lemma}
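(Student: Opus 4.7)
The plan is to split $y \notin \Gamma_N(G)$ into an ``outer tail'' (where both $G$-evaluations vanish with overwhelming probability) and an ``inner intermediate'' range (where a Taylor-plus-variance estimate, analogous to the proof of Lemma \ref{lem:replaceX_y2}, gives the bound). The key input is the decomposition
\[
\frac{X_y(t)}{N} - tv_N = \frac{y}{N(1-\rho)} - tv_N' + \frac{E_y(t)}{N}, \qquad E_y(t) := \sum_{y' = 1}^y \bar\omega_{y' - 1}(t) - \overline{J^{\rm ZR}_{-1,0}}(t) + X_0(0),
\]
already derived in the proof of Proposition \ref{prop:error}, which controls how far the ``mapped'' argument deviates from the ``deterministic'' one.

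For $y$ well outside the time-swept range $[-AN(1-\rho), y_N + AN(1-\rho)]$, the second argument $y/(N(1-\rho)) - tv_N'$ is deterministically outside the support of $G$ for all $t \in [0, T]$, and on the event $\{\sup_t |E_y(t)| \le AN\}$ the first argument is too, so both $G$-evaluations vanish. To show this event has overwhelming probability, I would combine Lemma \ref{lem:current} (for $\overline{J^{\rm ZR}_{-1,0}}$), the $O(1)$-size of $X_0(0)$, and a time-uniform large deviations estimate for $\sup_t|\sum_{y'=1}^{y} \bar\omega_{y'-1}(t)|$. The last ingredient is proved exactly as \eqref{eq:qy} in the proof of Proposition \ref{prop:error}: reduce to the truly stationary ZRP via Proposition \ref{prop:origindistortion}, then combine the stationary large deviations bound \eqref{eq:LDEmu} with the time-slicing scheme of Remark \ref{rmk:supremum}. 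A union bound over $|y| \leq C N^\gamma$ leaves the total probability still exponentially small in $N$.

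For the remaining intermediate $y \in \Gamma_N(G)^c$ in the time-swept range (where one of the $G$'s may be nonzero), I would Taylor-expand the difference as $G'(\xi_{y,t})\, E_y(t)/N$, and estimate the resulting sum by Cauchy--Schwarz using $|E_y(t)|/N = O(N^{-1/4}\log^{3/2} N)$ uniformly in $y$ in this range, together with the fact that at each $t$ only $O(N)$ indices give a nonzero $G'(\xi_{y,t})$, yielding a total of order $o(1)$ (time-uniformity handled again by Remark \ref{rmk:supremum}). On the exponentially small bad event from the previous step, $R_{N, T}(G)$ is controlled crudely via $\|G\|_\infty$, the moment bound \eqref{omega_sup} and Cauchy--Schwarz, giving a polynomial-in-$N$ contribution that is drowned by the exponentially small probability. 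The main technical obstacle is the uniform-in-$y$ time-uniform large deviations bound for the partial geometric sum, but this is already essentially done in \eqref{eq:qy}, so it requires only careful bookkeeping rather than new ideas.
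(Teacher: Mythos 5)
Your proposal misses the one idea the paper's proof actually relies on: the monotonicity $X_y(t)<X_{y'}(t)$ for $y<y'$. Since for $y\notin\Gamma_N(G)$ the deterministic term $G\big(\tfrac{y}{N(1-\rho)}-tv_N'\big)$ vanishes, the paper observes that $R_{N,T}(G)\neq 0$ forces some $X_y(t)/N-v_Nt$ with $y\notin\Gamma_N(G)$ to enter the support of $G$, and by the ordering of the empty sites this can only happen if it happens for one of the two \emph{extreme} indices $y_{1,N},y_{2,N}$ of $\Gamma_N(G)$. The whole lemma thus reduces to the two single-index estimates $p_y+q_y+r_y$ of \eqref{eq:p1estimate}--\eqref{eq:p3estimate} with $\bar y=2AN$; no union bound over $y$ and no intermediate-range analysis are needed.

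Without this reduction, both halves of your argument break down. For the outer tail, the probability that $\sup_{0\leq t\leq T}|\overline{J^{\rm ZR}_{-1,0}}(t)|\geq (\bar y-NA)/3$ is controlled only through Chebyshev and Lemma \ref{lem:current}, i.e.\ it is of order $N^{3/2}(\log N)^3/(\bar y-NA)^2$ --- polynomially, not exponentially, small (it is $O(N^{-1/2}(\log N)^3)$ for $y$ at distance $O(N)$ from the swept range) --- so your claim that the union bound ``leaves the total probability still exponentially small'' is false, and summing $r_y$ over the $O(N)$ indices nearest the boundary in fact diverges. (This particular point is rescuable, since the current event is the same for every $y$ and only the threshold varies, so one may take the worst threshold once; but you have to say so.) The intermediate range is the real gap: a term-by-term bound of $\frac{1}{\sqrt N}\sum_y|\bar\omega_y(t)|\,|G'(\xi_{y,t})|\,|E_y(t)|/N$ over $O(N)$ indices (and over $O(N^{\gamma})$ indices when $\gamma$ is close to $3/2$), each of size $O(N^{-1/4}(\log N)^{3/2})$, gives $O(N^{1/4}(\log N)^{3/2})$, which diverges, and Cauchy--Schwarz does not improve this. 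To beat $\sqrt N$ here one needs the cancellation coming from the block averages $\bar{\omega}^\ell_y$, i.e.\ precisely the argument of Lemma \ref{lem:replaceX_y2} --- at which point the splitting buys nothing. The monotonicity argument is not optional bookkeeping; it is the proof.
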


We now prove Lemmas \ref{lem:replaceX_y2} and \ref{lem:replaceX_y1}.

\begin{proof}[Proof of Lemma \ref{lem:replaceX_y2}]
For $y\in \Gamma_N(G)$, and $\ell \geq 1$, define 
\begin{equation*}
\Delta_y^\ell (G) :=	G \big(\tfrac{X_y(t)}{N} - tv_N\big) - G \big(\tfrac{y}{N(1-\rho)} -  t v'_N\big) - \frac{1}{2\ell+1} \sum_{|z-y|\leq \ell} \big[G \big(\tfrac{X_z(t)}{N} - tv_N\big) - G \big(\tfrac{z}{N(1-\rho)} -  t v'_N\big) \big].
\end{equation*}
Summing by parts, we bound the absolute value in \eqref{eqn:repXy2}  by
\begin{equation*}
\Big| \frac{1}{\sqrt{N}} \sum_{y \in \Gamma_N(G)} \bar{\omega}_y (t) \Delta_y^\ell (G)  \Big|+ \Big| \frac{1}{\sqrt{N}} \sum_{y \in \Gamma_N(G)} \bar{\omega}^\ell_y (t) \big[G \big(\tfrac{X_y(t)}{N} - tv_N\big) - G \big(\tfrac{y}{N(1-\rho)} -  t v'_N\big) \big] \Big|,
\end{equation*}
where  $\nota{\bar{\omega}^\ell_y (t)=(2\ell+1)^{-1}\sum_{|z-y|\leq \ell}\bar{\omega}_z (t)}$. By \eqref{omega_sup} and Remark \ref{rmk:supremum},  if $\ell > N^\delta$ for some $\delta > 0$, then there exists some constant $C=C(\rho)$ such that
\[\E_{\rho}^N \big[ \sup_{0 \leq t \leq T} \bar{\omega}_y (t)^2 \big]^{1/2} \leq C (\log N)^4 \quad  \mbox{ and }\quad \E_{\rho}^N \big[\sup_{0 \leq t \leq T} \bar{\omega}^\ell_y (t)^2\big]^{1/2}\leq C (\log \ell)^3 /\sqrt{\ell},\]
we obtain that the expectation in \eqref{eqn:repXy2} is bounded from above by triangular and Cauchy-Schwarz inequality by
\begin{multline}
\label{re1-3}
 \frac{C (\log N)^4}{\sqrt{N}} \sum_{y \in \Gamma_N(G)}\E_{\rho}^N \big[\sup_{0 \leq t \leq T} \Delta_y^\ell (G)^2 \big]^{1/2}  \\
 +  \frac{C (\log \ell)^3}{\sqrt{N\ell}} \sum_{y \in \Gamma_N(G)} \E_{\rho}^N \Big[ \sup_{0 \leq t \leq T} \Big( G \big(\tfrac{X_y(t)}{N} - tv_N\big) - G \big(\tfrac{y}{N(1-\rho)} -  t v'_N\big) \Big)^2 \Big]^{1/2}.
\end{multline}
To estimate the first term, rewrite
\begin{multline*}
\Delta_y^\ell (G) =	\frac{1}{2\ell+1} \sum_{|z-y|\leq \ell} \big[G \big(\tfrac{X_y(t)}{N} - v_Nt\big)-G \big(\tfrac{X_z(t)}{N} - v_Nt\big)\Big]\\
 -\frac{1}{2\ell+1} \sum_{|z-y|\leq \ell} \big[G \big(\tfrac{y}{N(1-\rho)} -  t v'_N\big)- G \big(\tfrac{z}{N(1-\rho)} -  t v'_N\big) \big].
\end{multline*}
The second line above is a discrete laplacian, and is therefore of order $O(\ell^2/N^2)$. To estimate the first line, we shorten $U_y:=\tfrac{X_y(t)}{N} - v_Nt$, and develop $G(U_z)$ around $U_y$ to obtain by translation and time invariance, and using the elementary inequality $(a+b+c)^2\leq 3a^2+3b^2+3b^2$
\begin{multline}
\label{eq:Deltayell}
\E_{\rho}^N \big[\sup_{0 \leq t \leq T} \Delta_y^\ell (G)^2 \big]=\frac{3}{N^2(2\ell+1)^2}\E_{\rho}^N  \pa{ \sup_{0 \leq t \leq T} G'(U_y)^2\pa{ \sum_{z=-\ell}^\ell[X_y(t)-X_{y+z}(t)]}^2}\\
+O\pa{\frac{1}{N^4}\E_{\rho}^N \pa{ \sup_{0 \leq t \leq T} [X_0(t)-X_{\ell}(t)]^4}}+O(\ell^4/N^4).
\end{multline}
Rewrite  
\[X_y(t)-X_z(t)=\sum_{y^\prime = z+1}^y [\omega_{y'-1} (t)+2],\]
where the $\omega_{y'-1} (t)$ are i.i.d. geometric variables,  so that by Remark \ref{rmk:supremum}, for $\ell > (\log N)^2$
\[\E_{\rho}^N \pa{ \sup_{0\leq t\leq T}[X_0(t)-X_{\ell}(t)]^4}=O(\ell^4 (\log \ell)^3).\]
In the first term in the right-hand side \eqref{eq:Deltayell}, we rewrite the sum as
\[\sum_{z= -\ell}^{\ell}[X_y(t)-X_{y+z}(t)]=-\sum_{z=1}^\ell  (\ell+1-z) \pa{\omega_{y+z-1} (t)-\omega_{y-z}(t)}.\]
In the right hand side, terms for $z\neq z'$ are independent, mean-$0$ variables, so that by Remark \ref{rmk:supremum},
\begin{multline*}
\E_{\rho}^N \pa{ \sup_{0 \leq t \leq T} G'(U_y)^2 \pa{ \sum_{z=-\ell}^\ell[X_y(t)-X_{y+z}(t)]}^2} \\
\leq C(G) \E_{\rho}^N \pa{ \sup_{0 \leq t \leq T} \pa{ \sum_{z=-\ell}^\ell[X_y(t)-X_{y+z}(t)]}^2}=O\big(\ell^3 (\log \ell)^3\big).
\end{multline*}
Combining these bounds yield
\begin{equation}
\label{eq:Deltayl}
\E_{\rho}^N (\sup_{0 \leq t \leq T} \Delta_y^\ell (G)^2)=O\pa{\frac{\ell (\log \ell)^3}{N^2}+\frac{\ell^4 (\log \ell)^3}{N^4}}.
\end{equation}

We now deal with the second term in \eqref{re1-3}, and write
\begin{multline*}
	 \big|G \big(\tfrac{X_y(t)}{N} - v_Nt\big) - G \big(\tfrac{y}{N(1-\rho)} -  t v'_N\big) \big| \\
	 \leq \|G^\prime\|_\infty \Big| \frac{X_y(t)}{N} - \frac{y}{N(1-\rho)} +  tN^{\gamma-1} \tfrac{2\rho-1}{\rho} \Big|=  N^{-1} \|G^\prime\|_\infty \Big|\sum_{y^\prime = 1}^y \bar{\omega}_{y'-1} (t)   - \bar{J^{\rm ZR}_{-1,0}} (t) + X_0 (0)\Big|.
\end{multline*}
Since, by Remark \ref{rmk:supremum}, \[\sup_{y\in \Gamma_N(G)}\E_{\rho}^N \pa{\sup_{0 \leq t \leq T} \pa{\sum_{y^\prime = 1}^y \bar{\omega}_{y'-1} (t)}^2}=O\big(N^{3/2} (\log N)^3\big),\] 
and $\E_{\rho}^N (X_0(0)^2)$ is a constant, we can bound the second term in \eqref{re1-3} by 
\[\frac{C(G, \rho) (\log \ell)^3}{\sqrt{N\ell}} \Big\{ N^{3/4} (\log N)^{3/2}+ \sqrt{\E_{\rho}^N \big[\sup_{0 \leq t \leq T} \bar{J^{\rm ZR}_{-1,0}} (t)^2\big]} \Big\}=O\pa{\frac{N^{1/4}(\log N)^{3/2} (\log \ell)^3}{\ell^{1/2}}},\]
according to Lemma \ref{lem:current}. 
Adding up this estimate to \eqref{eq:Deltayl}, we bound \eqref{re1-3} and therefore \eqref{eqn:repXy2} by
\[C (\log N)^4 (\log \ell)^3\Big( \frac{\ell^{1/2}}{N^{1/2}} + \frac{\ell^2}{N^{3/2}} + \frac{N^{1/4}}{\ell^{1/2}}\Big),\]
so that  choosing $\ell= N^{5/8}$ proves  the Lemma.
\end{proof}

\begin{proof}[Proof of Lemma \ref{lem:replaceX_y1}]Recall that $A$ is the size of the support of $G$, therefore 
\[y\notin \Gamma_N(G)\quad \Longrightarrow \quad   G \big(\tfrac{y}{N(1-\rho)} -  t v'_N\big) =0,\]
so that the second part of $R_{N,t}(G)$ vanishes, and we can write
\begin{equation*}
\bb{P}_{\rho}^N (|R_{N,T}(G)|>0)\leq\bb{P}_{\rho}^N \pa{\exists y\notin \Gamma_N(G) \mbox{ such that } \inf_{0 \leq t \leq T} |\tfrac{X_y(t)}{N} - v_Nt|\leq A}
\end{equation*} 
Since the positions of the empty sites are ordered in $y$, letting 
\[y_1=y_{1,N}:=y_N-2AN(1-\rho)\quad  \mbox{ and } \quad y_2=y_{2,N}:=y_N+2AN(1-\rho),\]
we rewrite
\begin{equation*}
\bb{P}_{\rho}^N (|R_{N,T}(G)|>0)\leq\bb{P}_{\rho}^N (\inf_{0 \leq t \leq T} |\tfrac{X_{y_1}(t)}{N} - v_Nt|\leq A)+\bb{P}_{\rho}^N (\inf_{0 \leq t \leq T} |\tfrac{X_{y_2}(t)}{N} - v_Nt|\leq A)
\end{equation*} 
Using the same notations as in \eqref{eq:p1estimate}, \eqref{eq:qy} and \eqref{eq:p3estimate}, we have  $\bar y_1=\bar y_2=2 AN$, therefore \eqref{eq:calcPy} yields that $\bb{P}_{\rho}^N (|R_{N,T}(G)|>0)$ vanishes as $N\to\infty$, and in particular $R_{N,T}(G)$ vanishes in probability as wanted. 
\end{proof}

\subsection{Proof of Theorem {\ref{thm:weak_asymFEP}} and {\ref{thm:asymFEP}}} We define 
\[\Phi (\alpha)=\frac{\alpha}{1+\alpha},\]
recall the definition \eqref{eq:DefyNvN} of $v_N'$, and that $\nota{\alpha:=\alpha(\rho)}$ the density of the zero-range process defined in \eqref{eq:defalpha}. Straightforward computations yield
\[v'_N= \frac{N^{\gamma-1}}{1-\rho} \Phi^\prime (\alpha)\]
We introduce the density fluctuation field of the zero-range process, namely 
\begin{equation}\label{ynt}
\mc{Z}^N_t (G) = \frac{1}{\sqrt{N}} \sum_{y \in \Z} \bar{\omega}_y (t) G \big(\tfrac{y}{N} - t v'_N(1-\rho)\big)=\frac{1}{\sqrt{N}} \sum_{y \in \Z} \bar{\omega}_y (t) G \big(\tfrac{y}{N} - tN^{\gamma-1} \Phi'(\alpha)\big),
\end{equation}
and given a test function $G$, define 
\[\widetilde{G}_\rho(u):=(1-\rho)G\pa{\frac{u}{1-\rho}} \qquad\mbox{ and } \qquad \widehat{G}_\rho(u):=\frac{1}{(1-\rho)}G(u(1-\rho)),\]
so that $\widehat{(\widetilde{G}_\rho)}_\rho=G$. So far, we have shown in \eqref{trans3} that for any $\varepsilon \geq 0$,
\begin{equation}
\label{ynt}
\limsup_{N\to\infty}\P_\rho^N(\sup_{0 \leq t \leq T} |\mc{Y}^N_t (G) -\mc{Z}^N_t (\widetilde{G}_\rho)| >\varepsilon)=0.
\end{equation}

We now conclude the proof of  Theorem \ref{thm:weak_asymFEP} and \ref{thm:asymFEP} by stating the following results, which were extracted from \cite{gonccalves2015stochastic} and \cite{gonccalves2010equilibrium}.
\begin{theorem}[{\cite[Proposition 2.1 and Theorem 2.2]{gonccalves2015stochastic}}]
\label{thm:fluc_zrp}
Consider a random distribution $Z_0\in \mathcal{S}'$ with covariance 
\begin{equation}
\label{eq:Z0}
\E(Z_0(G)Z_0(H))=\widetilde{\chi} \langle G,H\rangle,
\end{equation}
where 
\[\widetilde{\chi}(\alpha)=\frac{\Phi'(\alpha)}{\Phi(\alpha)}=\frac{\rho(2\rho-1)}{(1-\rho)^2}\]
is the zero-range process's compressibility. Then, for $\sym=1$,  the zero-range fluctuation process $\{\mc{Z}^N_t, \;0\leq t\leq T\}$ converges in the uniform topology on $D\big([0,T],\mathcal{S}'\big)$ to a  process $\{Z_t, \;0\leq t\leq T\}$ with initial state characterized by \eqref{eq:Z0}, which is the solution 
\begin{enumerate}[(i)]
	\item of the stochastic heat equation
\begin{equation}
\label{eq:DefSHEZR}	
\partial_t Z_t = \Phi' \big(\alpha\big)  \partial_u^2 Z_t  + \sqrt{2 \Phi \big(\alpha\big)} \partial_u \dot{ \mathcal{W}}_t.
\end{equation}
	 in the sense of Proposition \ref{prop:carsolSHE} with $D=\Phi(\alpha)$, $\sigma=\Phi'(\alpha)$, and  $\chi=\widetilde{\chi}(\alpha)$ for $ \gamma < 3/2$.
	\item  of the stochastic Burgers equation
	\begin{equation}
	\label{sbe_zrp2}
	\partial_t Z_t = \Phi'(\alpha)(\rho)  \partial_u^2 Z_t  + \frac{1 }{2} \Phi''(\alpha)\partial_u Z_t^2+ \sqrt{2\Phi(\alpha)} \partial_u \dot{ \mathcal{W}}_t.
\end{equation}
	in the sense of Definition \ref{def:solSBE} with $\chi =\widetilde{\chi}(\alpha)$ for $\gamma = 3/2$.
\end{enumerate}
\end{theorem}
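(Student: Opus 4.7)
\medskip

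\noindent\textbf{Proof proposal.}

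My plan is to derive this theorem as an essentially direct application of the zero-range fluctuation results of \cite{gonccalves2015stochastic}, reconciled with our notation via the identity $\alpha=\alpha(\rho)=(2\rho-1)/(1-\rho)$ from \eqref{eq:defalpha}. The first step is to match the macroscopic coefficients. Under $\mu_\rho$, the jump rate satisfies $\mu_\rho(\omega_0\geq 1)=\alpha/(1+\alpha)=\Phi(\alpha)$, whose derivative in $\alpha$ is the diffusion coefficient $\Phi'(\alpha)$; a direct computation yields $\Phi'(\alpha)/\Phi(\alpha)=\widetilde\chi(\alpha)=\rho(2\rho-1)/(1-\rho)^2$, which gives both the compressibility requested in \eqref{eq:Z0} and Einstein's relation at the ZRP level. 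The initial convergence $\mathcal{Z}^N_0\Rightarrow Z_0$ with covariance $\widetilde\chi(\alpha)\langle G,H\rangle$ then follows from the standard CLT for sums of i.i.d.\ geometric variables under $\mu_\rho$, applied after a standard mollification to accommodate the topology on $\mathcal{S}'$.

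The second step handles the discrepancy in initial distributions. The cited results are formulated for the ZRP started from the product invariant measure $\mu_\rho$, whereas the mapped ZRP coming from our FEP starts from $\widehat\mu_\rho$, which differs from $\mu_\rho$ only at the origin. By the basic coupling of Proposition \ref{prop:origindistortion}, the two processes can be realized on a common probability space so that the total number of discrepancies is, at every time, bounded by the initial discrepancy at the origin, which has finite moments uniformly in $N$. For any $G\in\mathcal{S}$, the resulting difference between the two fluctuation fields is $O(N^{-1/2})\|G\|_\infty$, uniformly in $t\in[0,T]$, and therefore vanishes in probability. This reduces the theorem to the case $\omega(0)\sim\mu_\rho$.

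For the stationary ZRP, the dynamical convergence is then exactly the content of \cite[Theorem 2.2]{gonccalves2015stochastic}, with initial datum characterized by \cite[Proposition 2.1]{gonccalves2015stochastic}. In the regime $\gamma<3/2$, the first-order Boltzmann-Gibbs principle shows that the nonlinear current contribution is negligible and the limit satisfies the linear stochastic heat equation \eqref{eq:DefSHEZR} in the sense of Proposition \ref{prop:carsolSHE}. In the critical regime $\gamma=3/2$, the second-order Boltzmann-Gibbs principle retains a nontrivial quadratic contribution $\tfrac12\Phi''(\alpha)\partial_u Z_t^2$, and the limit is the energy solution of \eqref{sbe_zrp2} fulfilling the $L^2$ energy condition of Definition \ref{def:L2EC}. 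Tightness and uniqueness of the limit point transfer verbatim from \cite{gonccalves2015stochastic}, since the unit-rate ZRP has uniformly bounded jump rates and product geometric invariant measures with exponential moments.

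The main obstacle I anticipate is purely technical: ensuring that the \emph{uniform-in-time} mode of convergence claimed by the theorem is preserved through the coupling step. This reduces to showing that $\sup_{0\leq t\leq T}|\mathcal{Z}^N_t(G)-\widetilde{\mathcal{Z}}^N_t(G)|\to 0$ in probability, where $\widetilde{\mathcal{Z}}^N_t$ denotes the field of the truly stationary ZRP. This follows from Proposition \ref{prop:origindistortion} together with the uniform-in-time discretization scheme already detailed in Remark \ref{rmk:supremum}, and is essentially the only input beyond the cited theorems that is specific to our mapping from the FEP.
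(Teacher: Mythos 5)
The paper gives no proof of this theorem of its own: it is quoted wholesale from \cite[Proposition 2.1 and Theorem 2.2]{gonccalves2015stochastic}, with the reduction from the distorted initial marginal $\widehat{\mu}_\rho$ to the truly stationary $\mu_\rho$ delegated to the basic coupling of Proposition \ref{prop:origindistortion}, and your proposal follows exactly the same route (cite the external result, match coefficients via $\alpha=(2\rho-1)/(1-\rho)$, use the coupling plus the uniform-in-time scheme of Remark \ref{rmk:supremum} to dispose of the $O(N^{-1/2})$ discrepancy of the fields), so it is consistent with the paper. The one slip is your claim that a direct computation yields $\Phi'(\alpha)/\Phi(\alpha)=\rho(2\rho-1)/(1-\rho)^2$: in fact $\Phi'(\alpha)/\Phi(\alpha)=1/[\alpha(1+\alpha)]=(1-\rho)^2/[\rho(2\rho-1)]$, while the compressibility, being the variance of $\omega_0$ under the product geometric measure $\mu_\rho$, equals the reciprocal $\Phi(\alpha)/\Phi'(\alpha)=\alpha(1+\alpha)=\rho(2\rho-1)/(1-\rho)^2$ — i.e.\ the middle expression in the stated formula for $\widetilde{\chi}$ is a typo that your "direct computation" should have caught rather than confirmed.
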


\begin{theorem}[{\cite[Theorem 2.5]{gonccalves2010equilibrium}}]
\label{thm:Gonc2}
For $ \gamma < 4/3$, and $\sym=0$, the zero-range fluctuation process $\{\mc{Z}^N_t, \;0\leq t\leq T\}$ converges weakly in $D([0,T],\mathcal{S}^\prime)$ to the stationary Gaussian process $\{Z_t, \;0\leq t\leq T\}$  in $C([0,T],\mathcal{S}^\prime)$ with mean zero and covariance given by
\[\E [Z_t (G) Z_s (H)] =\widetilde{\chi}  \<G,H\>,\]
for any $s,t \geq 0$ and $G,H \in \mathcal{S}$.
\end{theorem}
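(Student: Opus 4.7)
The plan is to reconstruct the argument of \cite{gonccalves2010equilibrium}. Since the moving frame in \eqref{ynt} travels at the characteristic speed $N^{\gamma-1}\Phi'(\alpha)$ of the underlying conservation law, the heuristic is that $\mc Z^N_t$ should be essentially time-constant in this frame; the proof amounts to verifying that both the drift and the martingale in its Dynkin decomposition are negligible for $\gamma<4/3$.

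First, I would apply Dynkin's formula to obtain
\[\mc{Z}^N_t(G)=\mc{Z}^N_0(G)+\int_0^t (\partial_s+\genzr)\mc{Z}^N_s(G)\,ds+M^N_t(G).\]
Setting $G_s(u):=G(u-sN^{\gamma-1}\Phi'(\alpha))$, the time derivative of the test function produces the term $-N^{\gamma-1}\Phi'(\alpha)\,\mc{Z}^N_s(\partial_u G)$, while the action of the generator, after a summation by parts, yields
\[\genzr\mc{Z}^N_s(G)=\frac{N^{\gamma-1}}{\sqrt{N}}\sum_{y\in\Z}\bigl(\mathbf{1}_{\{\omega_y(s)\geq 1\}}-\Phi(\alpha)\bigr)\,\partial_u G_s(y/N)+O(N^{\gamma-3/2}),\]
the constant $\Phi(\alpha)$ being inserted for free since $\sum_y\partial_u G_s(y/N)=o(N)$ for $G\in\mathcal S$.

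Second, I would invoke the Boltzmann-Gibbs principle for the stationary ZRP in its asymmetric version: after block-averaging on scale $\ell$, the random variable $\mathbf{1}_{\{\omega_y\geq 1\}}-\Phi(\alpha)$ can be replaced by its linearization $\Phi'(\alpha)(\omega_y-\alpha)$ up to an error whose space-time $L^2$ cost, estimated via the Kipnis-Varadhan inequality combined with the spectral gap of $\genzr$ on a box of size $\ell$, is of order $T^{1/2}N^{\gamma-1}\bigl[(\ell/N)^{1/2}+\ell^{-1/2}\bigr]$. Optimizing at $\ell\sim N^{1/3}$ yields an $O(T^{1/2}N^{\gamma-4/3})$ error, which vanishes precisely when $\gamma<4/3$. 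The linearized main term then exactly cancels the moving-frame contribution, so the full drift tends to $0$ in $L^2$.

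Third, I would control the martingale bracket: only right jumps at rate $N^\gamma$ contribute, so
\[\langle M^N(G)\rangle_t=\frac{N^\gamma}{N}\int_0^t\sum_{y\in\Z}\mathbf{1}_{\{\omega_y(s)\geq 1\}}\bigl(G_s(\tfrac{y+1}{N})-G_s(\tfrac{y}{N})\bigr)^2 ds=O\bigl(tN^{\gamma-2}\|\partial_uG\|_{L^2}^2\bigr),\]
which vanishes for any $\gamma<2$. Tightness of $\{\mc{Z}^N_t\}_{N\geq 1}$ then follows from Mitoma's criterion together with an Aldous-Rebolledo argument applied to $M^N_t$, and any limit point $Z$ satisfies $Z_t(G)=Z_0(G)$ almost surely in the moving frame. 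Since $\omega(s)$ is stationary under $\mu_\rho$ at every $s\geq 0$, its fluctuation field at time $t$ is centered Gaussian with covariance $\widetilde\chi(\alpha)\langle G,H\rangle$ by the standard CLT for sums of i.i.d.\ geometric variables, and combined with the identity $Z_t=Z_0$ this uniquely identifies the limit as the stated stationary Gaussian process. The main obstacle is the asymmetric Boltzmann-Gibbs principle: it is precisely here that the constraint $\gamma<4/3$ becomes sharp, while the martingale estimate and the tightness argument both hold comfortably in a strictly wider range of $\gamma$.
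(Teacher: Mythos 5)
The first thing to note is that the paper does not prove this statement: Theorem \ref{thm:Gonc2} is imported verbatim from \cite[Theorem 2.5]{gonccalves2010equilibrium}, and the only original material surrounding it consists of the two remarks addressing the choice of topology ($\mathcal{H}_{-k}$ versus $\mathcal{S}'$) and the trivial extension to $\gamma<0$. Your reconstruction is therefore being measured against the proof in the cited reference rather than against anything in this paper. Your overall architecture --- Dynkin decomposition in the moving frame, cancellation of the frame derivative against the linearization of $\mathbf{1}_{\{\omega_y\geq 1\}}$, a Boltzmann--Gibbs principle for the nonlinear remainder, a vanishing martingale bracket (your $O(tN^{\gamma-2})$ computation is correct), Mitoma/Aldous tightness, and identification of the fixed-time marginals via the CLT for the product geometric measure --- is the right skeleton for that proof.

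The genuine gap sits in the one step that carries the entire content of the theorem, namely the quantitative Boltzmann--Gibbs estimate that is supposed to produce the threshold $\gamma<4/3$. The bound you assert, $T^{1/2}N^{\gamma-1}\bigl[(\ell/N)^{1/2}+\ell^{-1/2}\bigr]$, is internally inconsistent: it is minimized at $\ell\sim N^{1/2}$, not at $\ell\sim N^{1/3}$, with minimal value $T^{1/2}N^{\gamma-5/4}$, while your own choice $\ell\sim N^{1/3}$ gives $T^{1/2}N^{\gamma-7/6}$; in neither case does one reach $N^{\gamma-4/3}$. As written, your argument therefore establishes the conclusion only for $\gamma<5/4$ (resp.\ $\gamma<7/6$), which is strictly weaker than the statement, and since the range of $\gamma$ is the only nontrivial assertion here, this is a missing estimate rather than a bookkeeping slip. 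A related imprecision is likely the source of the mismatch: for the totally asymmetric generator, the Kipnis--Varadhan inequality must be applied with the Dirichlet form of the \emph{symmetric part} of the accelerated generator, which is $N^{\gamma}$ times the rate-one symmetric ZRP Dirichlet form; the resulting factor $N^{-\gamma}$ in front of the $H_{-1}$ norm is precisely where the $\gamma$-dependence of the threshold enters, and your stated error rate does not exhibit it. The actual argument in \cite{gonccalves2010equilibrium} splits the remainder $V=\mathbf{1}_{\{\omega_0\geq1\}}-\Phi(\alpha)-\Phi'(\alpha)(\omega_0-\alpha)$ into its projection onto the block density (controlled by Cauchy--Schwarz in time together with an equivalence-of-ensembles bound) and the orthogonal fluctuating part (controlled by the $H_{-1}$ bound through the spectral gap), tracks the $N^{\gamma}$ time scale through both contributions, and only then optimizes over $\ell$ to obtain $4/3$; that two-term bound and its optimization are exactly what is missing from your proposal.
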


\begin{remark}
In \cite{gonccalves2010equilibrium}, the authors considered the fluctuation fields in a different space $\mathcal{H}_{-k}$ instead of $\mathcal{S}^\prime$. However, the choices of the spaces are only relevant when proving tightness of the fluctuation fields, and the readers could check directly that the fluctuation fields are also tight in the space $D([0,T],\mathcal{S}^\prime)$ with respect to the weak uniform topology.
\end{remark}

\begin{remark}
Although the results in \cite{gonccalves2010equilibrium} are only stated for $\gamma \geq 0$, they are obvious true for $\gamma < 0$ since in this case the evolution of the dynamics is too weak to affect the macroscopic behavior of the process.
\end{remark}

Clearly, a process $\{Y_t,\;0\leq t \leq T \}$ is solution of \eqref{eq:DefSHE} (resp. \eqref{sbe_zrp}) iff  the process defined by $Z_t(G):=Y_t(\widehat{G}_\rho)$ is solution of \eqref{eq:DefSHEZR} (resp. \eqref{sbe_zrp2}). Thanks to \eqref{ynt},  Theorem \ref{thm:weak_asymFEP} is therefore a direct consequence of Theorem \ref{thm:fluc_zrp}.

Similarly, Theorem \ref{thm:asymFEP} is a direct consequence of Theorem \ref{thm:Gonc2}.

%
%
%
%
%

\section{Proof by sharp estimates}
\label{sec:proofEqui}
In this section, we give an alternative proof of Theorem \ref{thm:weak_asymFEP} which does not rely on the mapping to the zero-range process. Doing so, we obtain a sharper estimate (in $O(\log^2 \ell /\ell)$ on the equivalence of ensembles for the FEP than the one previously obtained in \cite{blondel2020hydrodynamic} (in $O(\ell^{-1/4})$) to derive the supercritical hydrodynamic limit. This estimate is the main argument needed to derive equilibrium fluctuations, and we feel it is interesting on its own, which is the reason why we give this alternative proof.

\begin{theorem}[Symmetric case]
\label{thm:symFEP}
For $\sym=1$, $\gamma=-\infty$, the FEP's  fluctuation field 
\begin{equation}
\label{eq:YNtsym}
\mc{Y}^N_t (G) = \frac{1}{\sqrt{N}} \sum_{x \in \Z} \bar{\eta}_x (t) G(x/N),
\end{equation}
 converges in the weak uniform topology on $D\big([0,T], \mathcal{S}'\big)$, as $N\to\infty$ to a process $\{Y_t, \; 0\leq t\leq T\} $, which is solution to the stochastic heat equation in the sense of Proposition \ref{prop:carsolSHE}, meaning that  for any $G \in \mathcal{S}$,
\[M_t (G) := Y_t (G) - Y_0 (G) -  D(\rho) \int_0^t Y_s (\partial_u^2 G) ds\]
\[N_t(G) :=\big[M_t (G)\big]^2 - 2 t   \sigma(\rho) \|\partial_u G\|_{L^2 (\R)}^2\]
are both integrable martingales w.r.t. $Y$'s natural filtration, and that for any $t\geq 0$, 
	\begin{equation}
	\label{eq:fixedtimecovariances}
	\E(Y_t(G)Y_t(H))=\chi(\rho) \langle G,H\rangle.
	\end{equation}
\end{theorem}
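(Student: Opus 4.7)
The plan is to verify directly the martingale problem characterizing the stochastic heat equation (Proposition \ref{prop:carsolSHE}), without appealing to the zero-range mapping. The scheme has four ingredients: a Dynkin decomposition of $\mathcal{Y}^N_t(G)$, a sharp Boltzmann--Gibbs principle that closes the drift at first order, identification of the quadratic variation, and Mitoma--Aldous tightness in $D([0,T],\mathcal{S}')$.

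First I would write $\genex \eta_x = N^2(j_{x-1,x} - j_{x,x+1})$ with instantaneous current $j_{x,x+1}(\eta) = \eta_{x-1}\eta_x(1-\eta_{x+1}) - (1-\eta_x)\eta_{x+1}\eta_{x+2}$, so that after one summation by parts,
\[\genex \mathcal{Y}^N_s(G) = \sqrt{N}\sum_{x\in\Z} (\nabla_N G)(x/N)\,j_{x,x+1}(\eta(s)),\]
where $\nabla_N$ denotes the discrete gradient. Since the trajectory stays in $\mathcal{E}_\Z$ under $\P^N_\rho$, the pathwise identity $\eta_x\eta_{x+1} = \eta_x + \eta_{x+1} - 1$ can be used to simplify the cubic monomials in $j_{x,x+1}$; either way the mean of $j_{x,x+1}$ under $\pi_\rho$ vanishes, and the derivative of the mean of $\eta_{x-1}\eta_x(1-\eta_{x+1})$ in $\rho$ equals $a'(\rho) = D(\rho)$, giving the relevant macroscopic diffusion coefficient.

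The heart of the argument is the first-order Boltzmann--Gibbs principle (Proposition \ref{thm:bg}): for any $V\in\mathcal{S}$ and any local $\phi$ with $\tilde\phi(\rho) = \E_{\pi_\rho}(\phi)$,
\[\lim_{N\to\infty}\E^N_\rho\!\left[\left(\int_0^T \frac{1}{\sqrt{N}}\sum_{x\in\Z} V(x/N)\big[\tau_x\phi - \tilde\phi(\rho) - \tilde\phi'(\rho)(\eta_x - \rho)\big]\,ds\right)^2\right] = 0.\]
I would prove it by the classical two-block route: condition on the empirical density in mesoscopic blocks of size $\ell$, use the spectral gap of the symmetric FEP on $\mathcal{E}_\ell$ (of order $\ell^{-2}$, available from \cite{blondel2020hydrodynamic}) to replace $\tau_x\phi$ by its microcanonical average, and then transfer from microcanonical to grand canonical expectations via the sharp equivalence of ensembles (Proposition \ref{pro:equiv}). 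The improved error $O(\ell^{-1}\log^2\ell)$ is precisely what is required: summing over the $N/\ell$ mesoscopic boxes and optimizing $\ell$ yields a total error compatible with the $1/\sqrt{N}$ normalization, whereas the $O(\ell^{-1/4})$ bound of \cite{blondel2020hydrodynamic} is too weak at this scale.

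Applying BG with $\phi=j_{0,1}$ and a further summation by parts converts the drift into $\int_0^t D(\rho)\,\mathcal{Y}^N_s(\partial_u^2 G)\,ds + o_{L^2}(1)$. Dynkin's formula also gives $\langle M^N(G)\rangle_t$ as the time integral of $\sum_x (\nabla_N G)^2(x/N)[c_{x,x+1}+c_{x+1,x}](\eta(s))$; since $\E_{\pi_\rho}[c_{0,1}+c_{1,0}] = 2\sigma(\rho)$, the integrand's expectation is $2\sigma(\rho)\|\partial_u G\|_{L^2}^2 + o(1)$, and $L^2$ concentration follows once again from Proposition \ref{pro:equiv}. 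Tightness in $D([0,T],\mathcal{S}')$ then follows from Mitoma's criterion combined with Aldous' criterion for each $\mathcal{Y}^N(G)$, using the uniform $L^2$ bounds on the drift and bracket; the fixed-time covariance $\E(Y_t(G)Y_t(H)) = \chi(\rho)\langle G,H\rangle$ comes from stationarity and a CLT for $\mathcal{Y}^N_0$ under $\pi_\rho$, identified through the compressibility formula \eqref{eq:Defchirho}. The main obstacle is the BG step: once Proposition \ref{pro:equiv} is established, everything else is a routine adaptation of the standard equilibrium fluctuation scheme for symmetric conservative dynamics.
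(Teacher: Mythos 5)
Your proposal follows essentially the same route as the paper's direct proof in Section \ref{sec:proofEqui}: Dynkin's formula, the gradient structure of the instantaneous current, a first-order Boltzmann--Gibbs principle proved by block decomposition combined with the sharp equivalence of ensembles (Proposition \ref{pro:equiv}, whose $O(\ell^{-1}\log^2\ell)$ rate is indeed the step where the older $O(\ell^{-1/4})$ bound fails), convergence of the quadratic variation, and Mitoma/Aldous tightness. The one point to phrase carefully is the order of operations in the drift: since $\E_{\pi_\rho}[j_{0,1}]\equiv 0$ for all $\rho$, Proposition \ref{thm:bg} must be applied to the gradient summand $h(\eta)=\eta_{-1}\eta_0+\eta_0\eta_1-\eta_{-1}\eta_0\eta_1$ (with $\widetilde h'(\rho)=a'(\rho)=D(\rho)$) \emph{after} performing the exact second summation by parts furnished by $j_{x,x+1}=\tau_x h-\tau_{x+1}h$, rather than to $j_{0,1}$ itself tested against the $\sqrt{N}$-weighted discrete gradient of $G$, for which the first-order statement is not strong enough.
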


\medskip
First note that in the stationary state, direct calculation immediately yields \eqref{eq:fixedtimecovariances}. Furthermore, for any compactly supported test functions $G: \R\to\R$, by Dynkin's martingale formula,
\[\mc{M}^N_t (G):= \mathcal{Y}^N_t(G) - \mathcal{Y}^N_0 (G) - \int_0^t \genex \mathcal{Y}^N_s(G) ds\]
is a martingale with quadratic variation given by
\[\<\mc{M}^N_{\cdot} (G)\>_t = \int_0^t \genex \mathcal{Y}^N_s(G)^2 - 2  \mathcal{Y}^N_s(G) \genex \mathcal{Y}^N_s(G) ds.\]
Direct calculations yield
\[\genex \mathcal{Y}^N_s(G) = \frac{1}{\sqrt{N}} \sum_{x \in \Z} [\tau_x h(\eta(s)) - a(\rho)] \partial_u^{2,N} G (x/N), \]
where $h(\eta) = \eta_{-1}\eta_0 + \eta_0\eta_{1} - \eta_{-1} \eta_0 \eta_1$, $a(\rho)$ is $h$'s average under $\pi_\rho$ defined in \eqref{eq:arho} and $\partial_u^{2,N} G$ is a discrete approximation of $G$'s laplacian,
\[ \partial_u^{2,N} G(x/N)=N^2\cro{G((x+1)/N)+G((x-1)/N)-2G(x/N)}.\] 
Furthermore, 
\[\<\mc{M}^N_{\cdot} (G)\>_t = \int_0^t \frac{1}{N} \sum_{x \in \Z} c_{x,x+1} (\eta_s) [\partial_u^N G(x/N)]^2 ds.\]
where similarly
\[ \partial_u^N G(x/N)=N\cro{G((x+1)/N)-G(x/N)}.\]

Note that by the Cauchy-Schwarz inequality and from the exponential decay of correlations under the measure $\pi_\rho$, one straightforwardly obtains the following result : 
\begin{lemma}\label{lem:qv}
	For any compactly supported test functions $G: \R\to\R$,
	\[\lim_{N\to\infty} \<\mc{M}^N_{\cdot} (G)\>_t = t\sigma(\rho) ||\partial_u G||_{L^2 (\R)} \quad \text{in $L^2 (\pi_\rho)$},\]
where $\sigma(\rho)$ is defined in \eqref{eq:Defsigmarho}.
\end{lemma}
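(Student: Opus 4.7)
The plan is to split the $L^2$ convergence into a deterministic mean computation and a variance estimate. First, by stationarity and translation invariance of $\pi_\rho$, together with the identity $\sigma(\rho)=\pi_\rho(c_{0,1})$ in \eqref{eq:Defsigmarho},
\[
\E_{\pi_\rho}\big[\<\mc{M}^N_{\cdot}(G)\>_t\big] = t\,\sigma(\rho)\cdot\frac{1}{N}\sum_{x\in\Z}[\partial_u^N G(x/N)]^2.
\]
Since $G\in\mathcal{S}$ is smooth and compactly supported, $\partial_u^N G(x/N)$ converges uniformly to $\partial_u G(x/N)$ on a compact set, and the Riemann sum converges to $\|\partial_u G\|_{L^2(\R)}^2$. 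This delivers the correct limit for the expectation.

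Next, set $f_N(\eta):=\frac{1}{N}\sum_{x\in\Z}c_{x,x+1}(\eta)\,[\partial_u^N G(x/N)]^2$, so that $\<\mc{M}^N_{\cdot}(G)\>_t=\int_0^t f_N(\eta_s)\,ds$. By Cauchy-Schwarz applied to the time integral, combined with stationarity,
\[
\E_{\pi_\rho}\Big[\Big(\int_0^t\big(f_N(\eta_s)-\E_{\pi_\rho}(f_N)\big)ds\Big)^2\Big]\leq t^2\,Var_{\pi_\rho}(f_N).
\]
Expanding,
\[
Var_{\pi_\rho}(f_N)=\frac{1}{N^2}\sum_{x,y\in\Z}[\partial_u^N G(x/N)]^2[\partial_u^N G(y/N)]^2\,Cov_{\pi_\rho}(c_{x,x+1},c_{y,y+1}).
\]
The main input is that $\pi_\rho$ has exponential decay of correlations, which is the step I expect to be the central technical point. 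It follows cleanly from the mapping identity \eqref{eq:mappingdistributions}: through $\Pi$, the local function $c_{x,x+1}(\eta)$ becomes a local function of finitely many $\omega_y$'s, and $\widehat{\mu}_\rho$ differs from the product measure $\mu_\rho$ only at the origin, so that for $|x-y|$ large enough the two local functions are supported on disjoint sets of zero-range sites, and a standard truncation plus the product structure of $\mu_\rho$ yields $|Cov_{\pi_\rho}(c_{x,x+1},c_{y,y+1})|\leq C e^{-c|x-y|}$ for some $C,c>0$.

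Since $\partial_u^N G$ is uniformly bounded and supported on $O(N)$ sites, summing the exponential bound yields $Var_{\pi_\rho}(f_N)=O(1/N)$. Combining the triangle inequality with the convergence of the mean,
\[
\E_{\pi_\rho}\Big[\big(\<\mc{M}^N_{\cdot}(G)\>_t - t\sigma(\rho)\|\partial_u G\|_{L^2(\R)}^2\big)^2\Big]\leq 2t^2\,Var_{\pi_\rho}(f_N) + 2\big(t\E_{\pi_\rho}(f_N)-t\sigma(\rho)\|\partial_u G\|_{L^2(\R)}^2\big)^2,
\]
and both contributions vanish as $N\to\infty$, concluding the proof. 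The only genuine subtlety is the exponential decay of correlations for $\pi_\rho$, for which the cleanest route is the mapping to the product measure $\mu_\rho$ rather than a direct transfer-matrix analysis of the explicit formula \eqref{pirho2}.
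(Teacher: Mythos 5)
Your decomposition is exactly the paper's: compute the mean of the quadratic variation (a Riemann sum converging to $t\sigma(\rho)\|\partial_u G\|_{L^2(\R)}^2$) and kill the variance using stationarity, Cauchy--Schwarz in time, and the exponential decay of correlations under $\pi_\rho$, yielding a bound of order $O(N^{-1})$. So the structure is correct and matches the paper, which disposes of the lemma in a few lines by citing the decay of correlations from \cite[Corollary 6.6]{blondel2020hydrodynamic}.

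The one place where your argument is genuinely shaky is the justification of that decay via the mapping. It is not true that $c_{x,x+1}(\eta)$ ``becomes a local function of finitely many $\omega_y$'s'': a fixed exclusion site $x$ does not correspond to a fixed zero-range site, since whether $x$ lies in the $y$-th cluster depends on the cumulative sums $X_y=X_0+\sum_{y'\le y}(\omega_{y'-1}+2)$, i.e.\ on the whole configuration between the origin and $x$. Consequently the supports of $c_{x,x+1}$ and $c_{y,y+1}$ viewed through $\Pi$ are never disjoint, and one would need a genuine truncation/concentration argument (the relevant zero-range window for site $x$ is near $(1-\rho)x$ up to exponentially small tails) to make your route work. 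The cleaner and correct route — which your last sentence dismisses — is precisely the one the paper's Appendix makes available: by Corollary \ref{cor:Markov}, $(\eta_x)_{x\ge0}$ is under $\pi_\rho$ a two-state ergodic Markov chain, so $|Cov_{\pi_\rho}(c_{x,x+1},c_{y,y+1})|\le Ce^{-c|x-y|}$ follows from the spectral gap of the transfer matrix (or one simply cites \cite[Corollary 6.6]{blondel2020hydrodynamic}, as the paper does). With that input replaced, the rest of your argument goes through verbatim.
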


Indeed, to prove the above result, one just note that
\[\lim_{N\to\infty} \E_{{\rho}}^N \big[  \<\mc{M}^N_{\cdot} (G)\>_t  \big] = t\sigma(\rho) ||\partial_u G||_{L^2 (\R)}\]
and the variance of $\<\mc{M}^N_{\cdot} (G)\>_t$ is bounded by a constant multiple of
\[\frac{t^2}{N^2}  \sum_{x \in \Z} [\partial_u^N G(x/N)]^4,\]
which has order $O(N^{-1})$ and thus vanishes as $N \rightarrow \infty$. Following classical estimates (see \cite{zhao2023stationary} for example), it is not hard to show the following result.
\begin{lemma}\label{lem:tightness}
	The sequence $\{\mathcal{Y}^N_t, 0 \leq t \leq T\}$ is tight with respect to the weak uniform topology of $D([0,T],\mathcal{S}^\prime)$.
\end{lemma}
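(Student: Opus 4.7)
The plan is to apply Mitoma's criterion, which reduces tightness of $\{\mathcal{Y}^N_t\}$ in $D([0,T],\mathcal{S}')$ (with the weak uniform topology) to tightness of the real-valued process $\{\mathcal{Y}^N_t(G),\; 0\le t\le T\}$ in $D([0,T],\R)$ for each fixed test function $G\in\mathcal{S}$. For the latter, I would appeal to Aldous's criterion, which requires (a) tightness of the marginals at each fixed time, together with (b) the standard estimate $\lim_{\delta\to 0}\limsup_{N\to\infty}\sup_{\tau,\theta}\P^N_\rho\big(\big|\mathcal{Y}^N_{\tau+\theta}(G)-\mathcal{Y}^N_\tau(G)\big|>\eps\big)=0$, where the supremum is over stopping times $\tau\le T$ and deterministic $\theta\le\delta$. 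Condition (a) follows immediately from stationarity and the identity $\E(\mathcal{Y}^N_t(G)^2)\to \chi(\rho)\|G\|_{L^2}^2$ coming from the exponential decay of correlations of $\pi_\rho$ (a version of which was already used for Proposition \ref{pro:equiv} and is discussed in Appendix \ref{sec:stationaryDist}).

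For condition (b), I would use Dynkin's decomposition already written above the statement,
\[\mathcal{Y}^N_t(G)=\mathcal{Y}^N_0(G)+\int_0^t \genex\mathcal{Y}^N_s(G)\,ds+\mathcal{M}^N_t(G),\]
and treat the drift and martingale pieces separately. For the martingale, Doob's $L^2$ inequality and the optional sampling theorem give
\[\E^N_\rho\big[(\mathcal{M}^N_{\tau+\theta}(G)-\mathcal{M}^N_\tau(G))^2\big]=\E^N_\rho\big[\langle \mathcal{M}^N(G)\rangle_{\tau+\theta}-\langle \mathcal{M}^N(G)\rangle_\tau\big]\le C\theta\,\sigma(\rho)\|\partial_uG\|_{L^2(\R)}^2,\]
by the explicit expression for $\langle\mathcal{M}^N(G)\rangle_t$ recalled above Lemma \ref{lem:qv}. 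Markov's inequality then yields the desired bound, uniform in $\tau$.

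For the drift term, Cauchy-Schwarz and stationarity give
\[\E^N_\rho\left[\Big(\int_\tau^{\tau+\theta}\genex\mathcal{Y}^N_s(G)\,ds\Big)^2\right]\le \theta^2\,\E_{\pi_\rho}\big[(\genex\mathcal{Y}^N_0(G))^2\big].\]
Using $\genex\mathcal{Y}^N_s(G)=\tfrac{1}{\sqrt N}\sum_x[\tau_xh(\eta)-a(\rho)]\partial_u^{2,N}G(x/N)$ together with the exponential decay of space correlations $|\mathrm{Cov}_{\pi_\rho}(\tau_xh,\tau_yh)|\le Ce^{-c|x-y|}$ (for which we refer to Appendix \ref{sec:stationaryDist}), the double sum is bounded by $\tfrac{C}{N}\sum_x(\partial_u^{2,N}G(x/N))^2\le C\|\partial_u^2G\|_{L^2(\R)}^2$, a bound uniform in $N$. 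Hence the drift contribution is $O(\theta^2)$, which is more than enough to conclude. Combining the martingale and drift estimates closes the Aldous criterion and, via Mitoma's lemma, establishes the tightness of $\{\mathcal{Y}^N_t\}$ in $D([0,T],\mathcal{S}')$.

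The only step requiring any real input is the exponential decay of correlations under $\pi_\rho$, which substitutes for the simpler product structure available in the zero-range case of \cite{gonccalves2010equilibrium,gonccalves2015stochastic}; everything else follows the classical scheme, as indicated in the reference \cite{zhao2023stationary}.
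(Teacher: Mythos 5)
Your proof is correct and follows exactly the classical Mitoma--Aldous scheme that the paper itself invokes without detail (it simply points to \cite{zhao2023stationary}); the two key inputs --- the deterministic bound $\<\mc{M}^N(G)\>_{\tau+\theta}-\<\mc{M}^N(G)\>_\tau\leq C\theta\|\partial_u G\|_{L^2}^2$ from $c_{x,x+1}\leq 1$, and the uniform-in-$N$ bound on $\E_{\pi_\rho}\big[(\genex\mathcal{Y}^N_0(G))^2\big]$ via the exponential decay of correlations under $\pi_\rho$ (cf.\ \cite[Corollary 6.6]{blondel2020hydrodynamic}, used identically in the Boltzmann--Gibbs step) --- are the right ones. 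One minor caveat: since $\tau$ is a stopping time you cannot directly invoke stationarity at time $\tau$, so the Cauchy--Schwarz bound on the drift should be $\theta\,\E^N_\rho\big[\int_0^{T+\theta}(\genex\mathcal{Y}^N_s(G))^2\,ds\big]=O(\theta)$ rather than $O(\theta^2)$, which is still amply sufficient for Aldous's criterion.
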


The main ingredient to derive the equilibrium fluctuations for the FEP is the Boltzmann-Gibbs principle, whose proof will be adapted from \cite{klscaling}. For any local function $\psi: \Sigma \rightarrow \R$, denote
\[\widetilde{\psi}(\rho) = E_{\pi_\rho} [\psi].\]

\begin{proposition}[Boltzmann-Gibbs principle]
\label{thm:bg}
	For any local function of the configuration $\psi$, any compactly supported smooth function $G$, and any $t>0$, we have 
	\begin{equation}\label{bg}
		\lim_{N\to\infty}\E_{\rho}^N\cro{\pa{\int_0^t ds \frac{1}{\sqrt{N}}\sum_{x\in \Z}G(x/N)\tau_x V_\psi(\eta(s))}^2}=0,
	\end{equation}
	where 
	\[V_\psi(\eta)=\psi(\eta)-\widetilde{\psi}(\rho)-\widetilde{\psi}'(\rho)(\eta_0-\rho),\]
	and $\tau_x \psi(\eta)=\psi(\eta_{\cdot-x})$ denotes the translation of $\psi$ by $x$.
\end{proposition}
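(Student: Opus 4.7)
The plan is to follow the multiscale scheme of \cite{klscaling}, adapted to the degenerate FEP dynamics, with the improved equivalence of ensembles (Proposition \ref{pro:equiv}) supplying the crucial sharp estimate. The starting point is the standard variance bound for additive functionals of stationary reversible Markov processes: with $V^N:=N^{-1/2}\sum_{x}G(x/N)\tau_x V_\psi$, one has
\[\E_{\rho}^N\!\cro{\pa{\int_0^t V^N(\eta(s))\,ds}^2}\leq C t\, \|V^N\|_{-1,N}^2,\]
where $\|\cdot\|_{-1,N}$ is the $H^{-1}$ norm associated with the symmetric part of $\genex$. It therefore suffices to prove that $\|V^N\|_{-1,N}^2\to 0$.

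To bound this $H^{-1}$ norm, I would decompose $V_\psi$ at a mesoscopic scale $\ell$. Writing $\eta^\ell_x=(2\ell+1)^{-1}\sum_{|y-x|\le\ell}\eta_y$ for the empirical density around $x$, I split
\[\tau_x V_\psi=\underbrace{\bigl(\tau_x\psi-\E_{\pi_\rho}[\tau_x\psi\mid \eta^\ell_x]\bigr)}_{\rm (A)}+\underbrace{\bigl(\E_{\pi_\rho}[\tau_x\psi\mid \eta^\ell_x]-\widetilde\psi(\eta^\ell_x)\bigr)}_{\rm (B)}+\underbrace{\bigl(\widetilde\psi(\eta^\ell_x)-\widetilde\psi(\rho)-\widetilde\psi'(\rho)(\eta^\ell_x-\rho)\bigr)}_{\rm (C)}+\underbrace{\widetilde\psi'(\rho)(\eta^\ell_x-\eta_x)}_{\rm (D)}.\]
Term (D) is a gradient/telescopic contribution that is integrated by parts against $G$, producing an $O(\ell/N)$ error because of the smoothness of $G$. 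Term (A) is handled by a spectral gap estimate for the FEP restricted to the ergodic component of a box of size $\ell$: combined with the variational formula for the $H^{-1}$ norm, this gives a contribution of order $\ell^2/N$ times $\|G\|_\infty^2$ (up to logarithmic factors).

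For (B), I apply Proposition \ref{pro:equiv}: under the canonical measure conditioned on $\eta^\ell_x$, the expectation of $\tau_x\psi$ differs from $\widetilde\psi(\eta^\ell_x)$ by $O(\ell^{-1}\log^2\ell)$, so (B) contributes an $O(\ell^{-2}\log^4\ell)$ term to the $L^2$ norm, hence to the $H^{-1}$ norm. Term (C) is a pure function of the block average, controlled by Taylor expansion: $|\widetilde\psi(\alpha)-\widetilde\psi(\rho)-\widetilde\psi'(\rho)(\alpha-\rho)|\le C(\alpha-\rho)^2$ for $\alpha,\rho>\frac12$, and the stationary fluctuations of $\eta^\ell_x-\rho$ are of order $\ell^{-1/2}$ by \eqref{eq:Defchirho}, so $\E_{\pi_\rho}[(\eta^\ell_x-\rho)^4]=O(\ell^{-2})$ and (C) contributes $O(\ell^{-2})$ to the $L^2$, therefore $H^{-1}$, bound. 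Summing the three contributions yields $\|V^N\|_{-1,N}^2 = O(\ell^2/N + \ell/N + \ell^{-2}\log^4\ell)$, which vanishes for the choice $\ell=N^{1/4}$ (say).

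The main obstacle is the spectral gap estimate in step (A): because of the facilitated kinetic constraint, one cannot directly invoke the standard SSEP gap. Instead I would argue as in \cite{blondel2020hydrodynamic,erignoux2022mapping} by using the mapping of Section \ref{sec:mapping}, which sends the stationary FEP on a box conditioned on its particle number to the symmetric zero-range process on a box conditioned on its total mass; the latter has a spectral gap of order $\ell^{-2}$ uniform in density, by classical results. The only additional point is to translate the $H^{-1}$ bound through the mapping on a finite box, a task made harmless by the fact that the mapping is a bijection at the level of canonical states. The other delicate step is that the sharp $O(\ell^{-1}\log^2\ell)$ bound in Proposition \ref{pro:equiv} is \emph{essential} here: the earlier $O(\ell^{-1/4})$ bound of \cite{blondel2020hydrodynamic} would produce an $O(\ell^{-1/2})$ contribution in (B), which, even after optimizing $\ell$, does not vanish fast enough to close the argument.
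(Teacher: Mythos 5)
Your overall strategy (global Kipnis--Varadhan bound, four-term decomposition around block averages, spectral gap for the one-block term, equivalence of ensembles for the conditional-expectation term) is a legitimate and classical route, and it is genuinely different from the paper's proof: the paper instead partitions $\Z$ into \emph{disjoint} blocks of size $\ell$, handles the block-averaged part by Cauchy--Schwarz in time together with the exponential decay of correlations under $\pi_\rho$, and invokes Kipnis--Varadhan only \emph{locally}, on each canonical hyperplane $\Sigma_{A,\bar k}$, with a non-quantified constant $C_\ell$; since $N\to\infty$ is taken before $\ell\to\infty$, the $N^2$ speeding-up kills $C_\ell t/N^2$ and \emph{no spectral gap estimate is ever needed}. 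Your version couples $\ell$ to $N$ ($\ell=N^{1/4}$), so it genuinely requires a quantitative, polynomial-in-$\ell$ gap for the FEP canonical dynamics on the ergodic component. You only assert this via the mapping to the constant-rate zero-range process; transferring the canonical Dirichlet form and the gap through the mapping on a finite box with fixed boundary conditions is plausible but not automatic (the number of zero-range sites depends on the number of empty sites, and the known ZRP gap degenerates as the density $\alpha\to\infty$, i.e.\ as $\rho\to 1$), and it is precisely the step the paper's architecture is designed to avoid.

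There are also two concrete errors in your bookkeeping. First, the assertion that an $L^2$ bound on (B) and (C) controls their $H^{-1}$ norm is false: functions of the conserved block density are not in the range of the block generator and their $H^{-1}$ norm can be infinite. These terms must be estimated through $\E[(\int_0^t\cdot)^2]\le t^2\,\mathrm{Var}_{\pi_\rho}(\cdot)$, which is what the paper does. Second, once you do compute that variance, you must account for the sum over $O(N)$ sites with $N^{-1/2}$ normalization and correlation range $\ell$: each of (B) and (C) separately has a nonzero mean of order $\ell^{-1}$ (they cancel only when added, since (A) and (D) are exactly centered), so treated separately each contributes a deterministic term of order $N^{1/2}\ell^{-1}$ to the field, which diverges for $\ell=N^{1/4}$; and even after the means cancel, the variance of the field carries a factor $\ell$ from the correlated summands, giving $O(\ell\cdot\ell^{-2}\log^4\ell)=O(\ell^{-1}\log^4\ell)$ rather than your $O(\ell^{-2}\log^4\ell)$. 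This missing volume factor is visible in your last remark: with your accounting the old $O(\ell^{-1/4})$ equivalence of ensembles would give $O(\ell^{-1/2})\to 0$ and would therefore suffice, contradicting your (correct) claim that the sharp bound is essential. The true threshold is that the error must be $o(\ell^{-1/2})$, which is exactly why $\ell^{-1/4}$ fails and $\ell^{-1}\log^2\ell$ works. With these corrections the scheme can likely be closed, but as written it does not.
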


By Proposition \ref{thm:bg} and Lemma \ref{lem:qv}, for any limit $Y_\cdot$ of $\mathcal{Y}^N_\cdot$ and any compactly supported test functions $G\in{ \mc S}$,
\[Y_t (G) - Y_0 (G) - D(\rho) \int_0^t  Y_s (\partial_u^2 G) ds\]
is a martingale with quadratic variation
\[t \sigma(\rho) ||\partial_u G||_{L^2 (\R)},\]
which proves Theorem \ref{thm:symFEP}. We now prove Proposition \ref{thm:bg}.

\subsection{Boltzmann-Gibbs Principle}
\label{sec:bg}
In this section, we prove Proposition \ref{thm:bg}. Since it is the main case of interest, let us assume $\psi$ depends only on the values of $\eta_0$ and $\eta_{\pm 1}$. The proof is divided into several steps, and it can be straightforwardly adapted when $\psi$ depends on the value of $\eta$ in a finite box $B_\psi$.

\noindent {\bf Step 1}.  Fix $\ell > 0$, which will goes to infinity after $N \rightarrow \infty$. For $i\in \Z$, fix points $x_i <y_i$ in $\Z$ such that $x_0=0$, and for $i\in \Z$
\[y_i-x_i = \ell-1, \qquad y_i + 3 = x_{i+1}.\]
Let $A_i = \{x_i, \dots, y_i\}$. Then, the  length of the interval $A_i$ is $\ell$ for each $i$.  Denote
\[A_i^o = \{x_i+1,\dots,y_i-1 \}, \qquad B_i = \{y_i,\dots,x_{i+1}\}.\]
Fix some point $z_i \in A_i^o$. Then, we rewrite the sum inside the expectation in \eqref{bg} as
\begin{multline}\label{bg1}
	\sum_{x\in \Z} G(x/N)\tau_x V_\psi =  \sum_{i \in \Z} \sum_{x \in  B_i} G(x/N)\tau_x V_\psi + \sum_{i \in \Z} \sum_{x \in A_i^o} \big(G(x/N) - G(z_i/N)\big) \tau_x V_\psi\\
	+ \sum_{i \in \Z} G(z_i/N) \sum_{x \in A_i^o} \tau_x V_\psi.
\end{multline}
For the first term above, by Cauchy-Schwarz inequality and the invariance of the measure $\pi_\rho$, 
\begin{multline}
	\E_{\rho}^N \cro{\pa{\int_0^t ds \frac{1}{\sqrt{N}} \sum_{i \in \Z} \sum_{x\in {B}_i}G(x/N)\tau_x V_\psi(\eta_s)}^2} \leq t^2 E_{\pi_\rho}\cro{\pa{  \frac{1}{\sqrt{N}}\sum_{i\in \Z} \sum_{x \in {B}_i} G(x/N)\tau_x V_\psi(\eta)}^2}\\
	=   \frac{t^2}{N}\sum_{i \neq i^\prime}  {\rm Cov}_{\pi_\rho}\cro{  \sum_{x \in {B}_i} G(x/N)\tau_x V_\psi(\eta),\sum_{x \in {B}_{i^\prime}} G(x/N)\tau_x V_\psi(\eta)} 
	\\+ \frac{t^2}{N}\sum_{i \in \Z} E_{\pi_\rho}\cro{\pa{  \sum_{x \in {B}_i} G(x/N)\tau_x V_\psi(\eta)}^2}.
\end{multline}
Since $|B_i|=4$,  it is easy to see that there exists a constant $C = C(G,\psi)$ such that the second term on the right hand side is bounded by $C t^2 / \ell$, which vanishes in the limit as $\ell \rightarrow \infty$. Since the distance between the two boxes $B_i$ and $B_{i^\prime}$ is of order $|i-i^{\prime}| \ell$, by the exponential decay of correlations of  the measure $\pi_\rho$ (\emph{cf.}\,\cite[Corollary 6.6]{blondel2020hydrodynamic}), there exists a constant $C$ independent of $N$ such that the first term in the last inequality is bounded by 
\[ \frac{t^2}{N}\sum_{i \neq i^\prime} E_{\pi_\rho}\cro{ \Big|  \sum_{x \in {B}_i} G(x/N)\tau_x V_\psi(\eta) \Big| } E_{\pi_\rho}\cro{ \Big|  \sum_{x \in {B}_{i^\prime}} G(x/N)\tau_x V_\psi(\eta) \Big| } e^{-C \ell |i-i^\prime|}.\]
We may bound the above term by 
\[  \frac{C t^2}{N} \sum_{i,i^\prime = -C N/\ell}^{CN/\ell}  e^{-C \ell |i-i^\prime|} \leq  \frac{Ct^2}{\ell}\]
for some constant $C = C(G,\psi)$. 

For the contribution of the second term in \eqref{bg1}, by smoothness of the function $G$, there exists a constant $C = C(G,\psi)$ such that 
\[\Big|\frac{1}{\sqrt{N}} \sum_{i \in \Z} \sum_{x \in A_i^o} \big(G(x/N) - G(z_i/N)\big) \tau_x V_\psi \Big| \leq \frac{C\ell}{\sqrt{N}},\] 
which vanishes as $N \rightarrow \infty$.

\noindent{\bf Step 2.} It remains to deal with the contribution of third term in \eqref{bg1}. For any finite set $F \subset \Z$, denote by $\eta^F$ the empirical density of particles in the set $F$ in configuration $\eta$,
\[\eta^F = \frac{1}{|F|} \sum_{x\in F} \eta_x.\]
For $i \in \Z$, define 
\[\widetilde{V}_{\psi,i} (\eta)= E_{\pi_\rho} \Big[  \sum_{x \in A_i^o} \tau_x V_\psi \big| \eta^{A_i}, \eta_{x_i-1}, \eta_{y_{i}+1} \Big].\]
Then, by Cauchy-Schwarz inequality,
\begin{multline}\label{eqn:cs1}
	\E_{\rho}^N \left[ \Big(  \int_0^t ds \frac{1}{\sqrt{N}} \sum_{i \in \Z}  G(z_i/N)  \sum_{x \in A_i^o} \tau_x V_\psi (\eta(s))\Big)^2 \right] \\
	\leq 2 \E_{\rho}^N \left[ \Big(  \int_0^t ds \frac{1}{\sqrt{N}} \sum_{i \in \Z} G(z_i/N) \Big\{ \sum_{x \in A_i^o} \tau_x V_\psi (\eta(s)) - \widetilde{V}_{\psi,i} (\eta(s)) \Big\}\Big)^2 \right]\\
	+ 2 \E_{\rho}^N \left[ \Big(  \int_0^t ds \frac{1}{\sqrt{N}} \sum_{i \in \Z} G(z_i/N) \widetilde{V}_{\psi,i} (\eta(s))\Big)^2 \right].
\end{multline}
We first prove the second term above converges to zero as $N \rightarrow \infty, \ell \rightarrow \infty$. Shorten 
\[A=A_0=\{0,\dots,\ell-1\},\]  
and $ \widetilde{V}_{\psi} =  \widetilde{V}_{\psi,0}$. By Cauchy-Schwarz inequality and the exponential decay of correlations of  the measure $\pi_\rho$ (\emph{cf.}\,\cite[Corollary 6.6]{blondel2020hydrodynamic}), there exists a constant $C=C(G)$ such that
\begin{multline}\label{cs2}
	\E_{\rho}^N \left[ \Big(  \int_0^t ds \frac{1}{\sqrt{N}} \sum_{i \in \Z} G(z_i/N) \widetilde{V}_{\psi,i} (\eta(s))\Big)^2 \right] \\
	\leq C t^2 \ell E_{\pi_\rho} \Big[ \Big( E_{\pi_\rho} \Big[ \frac{1}{\ell}\sum_{x \in A^o} \tau_x \psi   \big| \eta^{A}, \eta_{x_0-1}, \eta_{y_{0}+1} \Big] - \widetilde{\psi}(\eta^A)\Big)^2 \Big]
	\\ + C t^2 \ell E_{\pi_\rho} \Big[ \big( \widetilde{\psi}(\eta^A)-\widetilde{\psi}(\rho)-\widetilde{\psi}^\prime(\rho)(\eta^A-\rho)\big)^2 \Big] + \mathcal{O} (\ell^{-1}).
\end{multline}
Fix $\varepsilon_0 > 0$ such that $\rho- \varepsilon_0 > 1/2$ and that $\rho+\varepsilon_0 < 1$. We bound the first term above by
\[C t^2 \ell \Big\{ E_{\pi_\rho} \Big[ \Big( E_{\pi_\rho} \Big[ \frac{1}{\ell}\sum_{x \in A^o} \tau_x \psi   \big| \eta^{A}, \eta_{x_0-1}, \eta_{y_{0}+1} \Big] - \widetilde{\psi}(\eta^A)\Big)^2 \mathbbm{1}_{|\eta^A-\rho| <\varepsilon_0} \Big] 
+  P_{\pi_\rho} \Big( |\eta^A-\rho| \geq \varepsilon_0 \Big)\Big\}.\] 
According to Corollary \ref{cor:Markov}, under $\pi_\rho$, $\{\eta_x\}_{x\geq 0}$ is an ergodic Markov chain with finite state space $\{0,1\}$, and with transition probability 
	\[p(0,1) = 1, \quad p(1,1) = 1 - p (1,0) = \frac{2\rho-1}{\rho}.\]
	By \cite{katz1960exponential}, for any $\varepsilon > 0$, there exists a finite constant $C$ such that
	\[P_{\pi_\rho} \Big( |\eta^A-\rho| \geq \varepsilon \Big) \leq C \varepsilon^{-2} e^{-C \ell \varepsilon^2}.\]
	Together with Proposition \ref{pro:equiv1}, the first term in \eqref{cs2} is bounded by
	\[C t^2 \Big[ (\log \ell)^4 / \ell + \ell e^{-C \ell}\Big].\]
	By Taylor's expansion and the above large deviation estimates, the second term in \eqref{cs2} is bounded by
	\begin{multline*}
		C t^2 \ell E_{\pi_\rho} \big[ (\eta^A-\rho)^4 \big] \leq C t^2 \ell \Big[ E_{\pi_\rho} \big[ (\eta^A-\rho)^4 \mathbbm{1}_{|\eta^A-\rho| < \ell^{-1/3}} \big] + P_{\pi_\rho} \Big( |\eta^A-\rho| > \ell^{-1/3} \Big)  \Big]\\
		\leq Ct^2 \Big[ \ell^{-1/3} + \ell^{5/3} e^{-C \ell^{1/3}} \Big].
	\end{multline*}

\noindent{\bf Step 3.} To conclude the proof, it remains to show
\begin{equation}\label{kv0}
	\lim_{N\to\infty} \E_{\rho}^N \left[ \Big(  \int_0^t ds \frac{1}{\sqrt{N}} \sum_{i \in \Z} G(z_i/N) \Big\{ \sum_{x \in A_i^o} \tau_x V_\psi (\eta(s)) - \widetilde{V}_{\psi,i} (\eta(s)) \Big\}\Big)^2 \right] = 0.
\end{equation}
By Kipnis-Varadhan's inequality (see e.g. \cite[Proposition A1.6.1, p333]{klscaling}), the above term is bounded by
\begin{equation}\label{kv1}
	20 t \sup_{f \in L^2 (\pi_\rho)} \Big\{ \frac{1}{\sqrt{N}} \sum_{i \in \Z} G(z_i/N) E_{\pi_\rho} \Big[  \Big\{ \sum_{x \in A_i^o} \tau_x V_\psi - \widetilde{V}_{\psi,i} \Big\} f \Big] -  E_{\pi_\rho} \big[f(-\genex) f\big] \Big\}.
\end{equation}
To make notations short, let $f_i = \tau_{x_0-x_i} f$. Define $\bar{A}=A\cup\{x_0-1,y_0+1\}$.  For $\sigma \in \{0,1\}^{\bar{A}}$, denote
\[\bar{f} (\sigma) = E_{\pi_\rho} [f | \eta_{|\bar{A}} = \sigma].\]
Since $\pi_\rho$  is translation invariant, and $\sum_{x \in A^o} \tau_x V_\psi - \widetilde{V}_{\psi}$ depends only  on the values of $(\eta_x, x \in \bar{A})$, 
\begin{equation}
	\label{eq:Epirhofibar}
	E_{\pi_\rho} \Big[  \Big\{ \sum_{x \in A_i^o} \tau_x V_\psi - \widetilde{V}_{\psi,i} \Big\} f \Big] =  E_{\pi_\rho} \Big[  \Big\{ \sum_{x \in A^o} \tau_x V_\psi - \widetilde{V}_{\psi} \Big\} f_i \Big] = E_{\pi_\rho} \Big[  \Big\{ \sum_{x \in A^o} \tau_x V_\psi - \widetilde{V}_{\psi} \Big\} \bar{f_i} \Big].
\end{equation}
We now project on hyperplanes with fixed number of particles in $A=\{x_0\dots, y_0\}$. However, $\pi_\rho$ is not constant on such a set because the probability to see a local configuration depends on its boundary values. For this reason, aside from conditionning to the number of particles in $A$, we also condition to fixed boundary values; For any triplet $\bar k=(k,a,b)\in \{0,\dots,\ell\}\times \{0,1\}^2$, define 
\[\Sigma_{A,\bar k}=\left\{\eta\in \{0,1\}^\Z, \; \sum_{z\in A} \eta_z=k,\; \eta_{x_0-1}=a, \; \eta_{y_0+1}=b\right\},\]
the set of configurations on $\bar{A}$ with $k$ particles in $A$ and with boundary states $(a,b)$.
We can now rewrite the last term in the right-hand side of \eqref{eq:Epirhofibar} as
\[\sum_{\bar k} m_{\bar k} E_{A,\bar k}\Big[  \Big\{ \sum_{x \in A^o} \tau_x V_\psi - \widetilde{V}_{\psi} \Big\} \bar{f_i} \Big],\]
where $m_{\bar k} = \pi_\rho (\Sigma_{A, \bar k})$, and for any $g$ defined on $ \{0,1\}^{\bar A}$,
\[E_{A,\bar k}[g] = E_{\pi_\rho} \big[\,g(\eta_{\mid \bar A})\, |\, \eta\in \Sigma_{A,\bar k}\big].\]
For any $\bar k=(k,a,b)\in \{0,\dots,\ell\}\times \{0,1\}^2$ and  any function $g:\{0,1\}^{A} \rightarrow \R$, define $\mathcal{L}_{A,\bar k}$  as
\[\mathscr{L}_{A, \bar k}g (\eta) = \sum_{z =x_0}^{y_0} c_{x,x+1} (\eta) (g(\eta^{x,x+1})-g(\eta)) \]
with the convention that $\eta_{x_0-1} = a, \eta_{y_0+1} = b$. Note tha the generator does not in fact depend on the number $k$ of particles in $A$, just on the boundary states $a$ and $b$.  Recall that $\pi_\rho$ only charges ergodic configurations, we claim that $\sum_{x \in A^o} \tau_x V_\psi - \widetilde{V}_{\psi}$  is in the range of $\mathscr{L}_{A,\bar k}$, i.e. that there exists a function $g$ on $\Sigma_{A,\bar k}$ such that \[\sum_{x \in A^o} \tau_x V_\psi - \widetilde{V}_{\psi}=\mathscr{L}_{A, \bar k}g.\] 
The claim follows directly from the fact that $E_{A,\bar k}\big[\sum_{x \in A^o} \tau_x V_\psi - \widetilde{V}_{\psi} \big] = 0$ and from the following observations:
\begin{enumerate}[(i)]
	\item the codimension of $\mathscr{L}_{A, \bar k}$'s range is one in the space of functions on $\Sigma_{A, \bar k}$. Indeed, if $\mathscr{L}_{A, \bar k} g  = 0$, then $E_{A,\bar k}\big[ g\mathscr{L}_{A, \bar k}g \big] = 0,$ implying that $g$ is constant on $\Sigma_{A, \bar k}$ by ergodicity. In particular, the dimension of the kernel of $\mathscr{L}_{A,\bar k}$ is one, which proves the claim.
	\item The set of mean-$0$ functions is also of codimension $1$, and it contains the range of $\mathscr{L}_{A, \bar k}$. In particular, the range of $\mathscr{L}_{A, \bar k}$ is the set of mean-$0$ functions.
\end{enumerate}
Then, by Cauchy-Schwarz inequality, for any $\gamma > 0$, 
\begin{multline*}
	E_{A,\bar k} \Big[  \Big\{ \sum_{x \in A^o} \tau_x V_\psi - \widetilde{V}_{\psi} \Big\} \bar{f_i} \Big] \leq \gamma E_{A,\bar k} \Big[  \Big\{ \sum_{x \in A^o} \tau_x V_\psi - \widetilde{V}_{\psi} \Big\} (-\mathscr{L}_{A,\bar k})^{-1} \Big\{ \sum_{x \in A^o} \tau_x V_\psi - \widetilde{V}_{\psi} \Big\}\Big] \\ + \gamma^{-1} E_{A,\bar k} \Big[  \bar{f_i} (-\mathscr{L}_{A,\bar k}) \bar{f_i}\Big].
\end{multline*}
Therefore, we may bound the first term in \eqref{kv1} by
\begin{multline}\label{kv2}
	\frac{1}{\sqrt{N}} \sum_{i \in \Z} G(z_i/N) E_{\pi_\rho} \Big[  \Big\{ \sum_{x \in A_i^o} \tau_x V_\psi - \widetilde{V}_{\psi,i} \Big\} f \Big] \\
	\leq 	\frac{1}{\sqrt{N}} \sum_{i \in \Z} \gamma G(z_i/N) \sum_{\bar k}  m_{\bar k}  E_{A,\bar k} \Big[  \Big\{ \sum_{x \in A^o} \tau_x V_\psi - \widetilde{V}_{\psi} \Big\} (-\mathscr{L}_{A,\bar k})^{-1} \Big\{ \sum_{x \in A^o} \tau_x V_\psi - \widetilde{V}_{\psi} \Big\}\Big] \\
	+ 	\frac{1}{\sqrt{N}} \sum_{i \in \Z} \gamma^{-1}  G(z_i/N) \sum_{\bar k}  m_{\bar k}  E_{A,\bar k} \Big[  \bar{f_i} (-\mathscr{L}_{A,\bar k}) \bar{f_i}\Big].
\end{multline}

It is also easy to see that there exists a constant $C_\ell = C_\ell (\psi)$  such that
\[E_{A,\bar k} \Big[  \Big\{ \sum_{x \in A^o} \tau_x V_\psi - \widetilde{V}_{\psi} \Big\} (-\mathscr{L}_{A,\bar k})^{-1} \Big\{ \sum_{x \in A^o} \tau_x V_\psi - \widetilde{V}_{\psi} \Big\}\Big] \leq C_\ell.\]
By convexity and translation invariance of the Dirichlet form,
\[ N^2 \sum_{i} \sum_{\bar{k}}  m_{\bar k}  E_{A,\bar k} \Big[  \bar{f_i} (-\mathscr{L}_{A,\bar k}) \bar{f_i}\Big] \leq E_{\pi_\rho} \big[f (-\genex) f\big].\]
Taking $\gamma =   G(z_i/N) / (N^{5/2} \ell)$, we may bound \eqref{kv1} by $C_\ell t/ N^2$. This proves \eqref{kv0} and concludes the proof of the theorem.

\subsection{Equivalence of ensembles.} 
In this subsection, we improve on the equivalence of ensembles estimate given in \cite{blondel2020hydrodynamic}. Throughout this section, for $\ell > 0$ and $x\in \Z$  we define 
\[B_\ell(x)=\{x-\ell,\dots,x+\ell\} \qquad\mbox{ and }\qquad B_\ell:=B_\ell(0)=\{-\ell,\dots,\ell\}.\]
For any $\delta$, introduce 
\[E_\ell (\delta)= \{  (\ell + 1)(1+\delta) , \ldots, (2 \ell+1)(1-\delta)\} \qquad E_\ell:=E_\ell(0)= \{  \ell + 1 , \ldots, 2 \ell+1\},\]
which are the possible numbers of particles in $B_\ell$ after cropping densities $\delta$ close to $1/2$ and $1$.

Fix two boundary conditions $a:=(a_1,a_2)\in \{0,1\}^2$, and  $j \in E_\ell$,  and let
\begin{equation*}
	\mathcal{E}_{\ell,j}^a= \Big\{\eta \in \mathcal{E}_{B_\ell}, \;a_1+\eta_{-\ell}\geq 1, \; a_2+\eta_\ell \geq 1\; \text{and} \; \sum_{x\in B_\ell} \eta_x = j\Big\},
\end{equation*}
which is the set of configurations on $B_\ell$ with j particles, and which are ergodic when supplemented with boundary conditions $\eta_{-\ell-1}=a_1$, $\eta_{\ell+1}=a_2$.
Let $\nota{\pi_{\ell,j}^a}$ be the uniform measure on $\mathcal{E}_{\ell,j}^a$, and denote by $\nota{\E_{\ell,j}^a}$ the corresponding expectation.  Denote $\nota{\rho_\ell (j) = j / (2\ell +1)}$. 

\begin{proposition}
\label{pro:equiv}
	Fix an integer $k\geq 0$ and a local ergodic configuration $\sigma \in \mathcal{E}_{B_k}$. Then,  for any $\delta > 0$, there exists a constant $C>0$ such that for any $j \in E_\ell (\delta)$ and $a\in \{0,1\}^2$ 
	\begin{equation}
	\max_{x \in B_{\ell - (\log \ell)^2}}   |\pi_{\ell,j}^a (\eta_{|B_k(x)} = \sigma) - \pi_{\rho_\ell (j)} (\eta_{|B_k} = \sigma)| \leq \frac{C (\log \ell)^2}{\ell}.
	\end{equation}
\end{proposition}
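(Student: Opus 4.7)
The plan is to exploit the mapping $\Pi$ from Section~\ref{sec:staticmapping} and reduce the statement to a sharp equivalence of ensembles for i.i.d.\ geometric random variables, which is classical. Since $\pi_{\ell,j}^a$ is the uniform measure on $\mathcal{E}_{\ell,j}^a$ and $\Pi$ is a bijection, the image of $\pi_{\ell,j}^a$ under $\Pi$ restricted to $B_\ell$ is a uniform measure on ZRP compositions $(\omega_1, \ldots, \omega_n)$ with a fixed total; this uniform measure coincides with $\mu_{\rho_\ell(j)}^{\otimes n}$ conditioned on $\sum_y \omega_y = m$. For this setup a standard local central limit theorem yields the optimal $O(1/n)$ equivalence of ensembles, and the extra $(\log\ell)^2$ factor in the proposition will come from concentration estimates on the random ZRP index associated with the FEP site $x$.

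\textbf{Concrete steps.} First I would set up the restriction of $\Pi$ to $B_\ell$, tracking the $\pm 1$ boundary corrections induced by $a = (a_1, a_2)$, which yields $n = 2\ell + 1 - j + O(1)$ ZRP sites and $m = j - (n-1) + O(1)$ ZRP particles. Second, I would translate the local FEP event $\{\eta_{|B_k(x)} = \sigma\}$ into the ZRP event that a block $(\omega_{i(x)}, \ldots, \omega_{i(x)+r})$ of length $r+1 \leq 2k+1$, starting at the random ZRP index $i(x)$ associated to $x$, equals a prescribed vector $\omega^\sigma$ determined by $\sigma$. Third, applying Bayes' formula to both $\pi_{\ell,j}^a$ and $\mu_{\rho_\ell(j)}^{\otimes n}$, the difference in the statement reduces to the relative error between a ratio of composition partition functions $Z_{n-r-1, m-s_\sigma}/Z_{n,m}$ (where $s_\sigma = \sum_i \omega_i^\sigma$) and the corresponding product of geometric marginals. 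A quantitative local CLT for sums of i.i.d.\ geometric variables, via Edgeworth expansion or the Nagaev-Guivarc'h spectral method, shows this error is $O(1/n)$, uniformly in $|s_\sigma| = O(1)$ and in $m/n$ in a compact subset of $(0, \infty)$ guaranteed by $j \in E_\ell(\delta)$.

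\textbf{The $(\log\ell)^2$ buffer and the main obstacle.} Under $\pi_{\ell,j}^a$, the index $i(x)$ is a sum of $O(\ell)$ Bernoulli-like variables with mean $(1-\rho_\ell(j))(x + \ell + 1) + O(1)$ and fluctuations of order $\sqrt{\ell}$. Choosing $x \in B_{\ell - (\log\ell)^2}$ guarantees, by a Hoeffding-type inequality, that $i(x)$ lies in the bulk $\{C \log^2\ell, \ldots, n - C\log^2\ell\}$ with probability at least $1 - e^{-c(\log\ell)^2}$; on this event the local CLT of the previous step applies uniformly, while off this event a trivial bound contributes at most $e^{-c(\log\ell)^2}$, which is negligible compared with the claimed rate. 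Combining the two contributions produces the stated $O((\log\ell)^2/\ell)$ bound. The main technical obstacle is establishing the quantitative local CLT for geometric sums with explicit $O(1/n)$ error, uniformly in the empirical density $\rho_\ell(j)$ and in the small boundary corrections inherited from $\Pi$; this is classical but the careful uniform bookkeeping is precisely what forces the additional $\log^2\ell$ factor, which we do not expect to be sharp in its $\log$ exponent but is more than sufficient for the Boltzmann-Gibbs estimate that consumes it.
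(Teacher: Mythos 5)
Your high-level idea --- push $\pi^a_{\ell,j}$ through the mapping $\Pi$ to a uniform measure on compositions, i.e.\ i.i.d.\ geometrics conditioned on their sum, and invoke a quantitative local CLT --- is natural and closely related in spirit to the paper's combinatorics (the counts $N_{\ell,j}=\binom{j+1}{\ell-j}$ are exactly numbers of compositions, and $\binom{m+n-1}{n-1}=a^{-m}(1-a)^{-n}\,\P(S_n=m)$ for sums of geometrics). However, there is a genuine gap at your second and third steps. The event $\{\eta_{|B_k(x)}=\sigma\}$ at a \emph{fixed spatial} location $x$ is not the event that a block of ZRP coordinates starting at the random index $i(x)$ takes a prescribed value: it additionally pins the exact positions of the empty sites, i.e.\ it forces a partial sum of the $\omega$'s to hit a prescribed value (a renewal-type condition). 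Decomposing over the number $j_1$ of particles to the left of the window, one gets
\[
\pi^a_{\ell,j}(\eta_{|B_k(x)}=\sigma)\;=\;\frac{\sum_{j_1+j_2=j-j_0}N_{\ell_1+O(1),\,j_1+O(1)}\,N_{\ell_2+O(1),\,j_2+O(1)}}{N_{2\ell+1+O(1),\,j+O(1)}},
\]
and in the ZRP variables the split $(n_i,m_i)$ of (sites, particles) on each side of the window satisfies $n_i=\ell_i-j_i+O(1)$ and $m_i=2j_i-\ell_i+O(1)$: as $j_1$ varies, the number of ZRP sites \emph{and} the number of ZRP particles on each side vary simultaneously. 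Consequently the numerator is not a convolution $\sum_{m_1}\P(S_{n_1}=m_1)\P(S_{n_2}=m-s_\sigma-m_1)$ with $n_1$ fixed, and it does not collapse to the single canonical ratio $Z_{n-r-1,m-s_\sigma}/Z_{n,m}$ that you write down. This is precisely the content of Lemma \ref{lem:1} in the paper: the convolution of the $N$'s equals $N_{\ell,j}$ \emph{plus} a whole telescoping series $\sum_{m\ge0}N_{\ell-4m,j-2m}$ (arising from junction configurations with two adjacent empty sites), and the proposition is proved by summing that series term by term.

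Relatedly, your diagnosis of the source of the $(\log\ell)^2$ factor is off, which is a symptom of the missing step. In the paper it does not come from concentration of a random index: the restriction $x\in B_{\ell-(\log\ell)^2}$ only enters to make the boundary term $\mathcal{F}_{\ell,j}$ of Lemma \ref{lem:3}, of size $C((1-\rho_\ell(j))/\rho_\ell(j))^{\ell+x}$, smaller than $C\ell^{-1}$. The $(\log\ell)^2/\ell$ in the statement comes from Lemma \ref{lem:2}, i.e.\ from comparing the exact factorial ratios $\prod_{n=1}^{m}\frac{1-\rho+n/\ell}{\rho+n/\ell}$ appearing in the telescoping series with the limiting geometric weights $((1-\rho)/\rho)^m$, summed over $m$. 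To salvage the local-CLT route you would need to (i) carry out the renewal-type decomposition over the particle split honestly, (ii) control how the local-CLT errors accumulate over the $O(\sqrt{\ell})$ effective terms of that sum, and (iii) check that the result is still $O((\log\ell)^2/\ell)$ rather than, say, $O(\ell^{-1/2})$. None of this is addressed in your proposal, and it is exactly the delicate part that the paper handles through exact combinatorial identities.
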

Note that this estimate is much sharper than the one $O(\ell^{-1/4})$ obtained in \cite{blondel2020hydrodynamic}. Further note that the sharp bound for the SSEP would be of order $1/\ell$, and since the FEP's stationary states are locally correlated, it is natural to have corrections of order $\log(\ell)$ w.r.t. the SSEP.  Low densities below $1/2+\delta/2$ are excluded because as  $\rho\to1/2$, long range correlations appear in $\pi_\rho$, whereas high densities larger than $1-\delta$ are excluded for technical reasons.
As a direct consequence of the above proposition, we have the following version of equivalence of ensembles. Since the proof is straightforward, we do not detail it here.

\begin{proposition}[Equivalence of ensembles]\label{pro:equiv1} Let $f: \mathcal{E}_\Z \rightarrow \R$ be a local function, whose support is contained in $B_{\ell_0}$ for some $\ell_0 >0$. Then, for any $\delta > 0$, there exists a constant $C>0$ such that for any $a\in\{0,1\}^2$, and any $j \in E_\ell (\delta)$,
	\[\Big| \frac{1}{\ell} \sum_{x \in B_{\ell - \ell_0}} \E_{\ell,j}^a [\tau_x f] -  \E_{\pi_{\rho_\ell (j)}} [f]\Big| \leq \frac{C (\log \ell)^2}{\ell}.\]
\end{proposition}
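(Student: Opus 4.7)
The plan is to reduce Proposition \ref{pro:equiv1} to Proposition \ref{pro:equiv} by expanding the local function $f$ as a linear combination of cylinder indicators. Since $f$ is supported in $B_{\ell_0}$ and takes only finitely many values, I can write
\[f(\eta) = \sum_{\sigma \in \{0,1\}^{B_{\ell_0}}} f(\sigma)\, \mathbf{1}_{\{\eta_{|B_{\ell_0}} = \sigma\}}, \qquad \tau_x f(\eta) = \sum_\sigma f(\sigma)\, \mathbf{1}_{\{\eta_{|B_{\ell_0}(x)} = \sigma\}}.\]
Both $\pi_{\rho_\ell (j)}$ and $\pi_{\ell,j}^a$ are supported on configurations that are ergodic on $B_\ell$, hence on any sub-box $B_{\ell_0}(x)$ with $x \in B_{\ell-\ell_0}$; only $\sigma \in \mathcal{E}_{B_{\ell_0}}$ contribute, and this index set has cardinality bounded by a constant depending on $\ell_0$ alone.

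I then split the average according to whether $x$ lies in the interior box $B_{\ell-(\log \ell)^2}$ or in the thin layer $B_{\ell-\ell_0}\setminus B_{\ell-(\log \ell)^2}$ (well-defined once $\ell$ is large enough that $(\log\ell)^2 \geq \ell_0$). For $x$ in the interior, Proposition \ref{pro:equiv} applied with $k = \ell_0$ gives
\[\bigl|\pi_{\ell,j}^a\bigl(\eta_{|B_{\ell_0}(x)} = \sigma\bigr) - \pi_{\rho_\ell(j)}\bigl(\eta_{|B_{\ell_0}} = \sigma\bigr)\bigr| \leq \frac{C(\log\ell)^2}{\ell}\]
uniformly in $\sigma \in \mathcal{E}_{B_{\ell_0}}$ and in $a$, $j$. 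Summing over $\sigma$ and using $\|f\|_\infty < \infty$ yields $|\E_{\ell,j}^a[\tau_x f] - \E_{\pi_{\rho_\ell(j)}}[f]| \leq C'(\log\ell)^2/\ell$ for each interior $x$. For $x$ in the boundary layer I invoke only the trivial bound $2\|f\|_\infty$; since that layer contains $O((\log\ell)^2)$ points, its contribution to the average (normalized by $\ell$) is again $O((\log\ell)^2/\ell)$.

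Adding the two contributions gives the announced $O((\log \ell)^2/\ell)$ estimate. There is no substantive obstacle here: Proposition \ref{pro:equiv} does all the real work, and the present statement is a routine decomposition-and-averaging consequence. The only place that requires any care is the thin boundary layer where the sharp marginal bound is not directly available, but it is harmless because its width $(\log\ell)^2$ is of the same order as the error already incurred in the interior estimate.
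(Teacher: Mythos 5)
Your argument is correct and is precisely the ``straightforward'' deduction the paper has in mind when it states that Proposition \ref{pro:equiv1} is a direct consequence of Proposition \ref{pro:equiv} and omits the details: expand $f$ over the finitely many ergodic cylinder configurations on $B_{\ell_0}$, apply the marginal estimate of Proposition \ref{pro:equiv} uniformly for $x$ in the bulk $B_{\ell-(\log\ell)^2}$, and absorb the boundary layer of width $O((\log\ell)^2)$ via the trivial bound $2\|f\|_\infty$. The only cosmetic point is the normalization: the statement's prefactor $1/\ell$ should be read as the average over $B_{\ell-\ell_0}$ (i.e.\ $1/|B_{\ell-\ell_0}|$) for the claim to be consistent, which is exactly how you treat it.
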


In the remainder of this subsection, we prove  Proposition \ref{pro:equiv}. For integers $\lfloor\ell/2\rfloor \leq j \leq \ell$, let $N_{\ell,j}$ be the number of ergodic configurations in $\nota{\Lambda_\ell:=\{1,\dots,\ell\}}$ with $j$ particles. It is easy to see that 
\[N_{\ell,j} ={ j+1 \choose \ell - j},\]
since to build an ergodic configuration, the $\ell-j$ empty sites need to be placed at one of the extremities, or at one of the $j-1$ places in-between particles. We start by stating and proving two technical lemmas.

\begin{lemma}\label{lem:1}
	Fix $\ell_1 < \ell_2$ such that $\ell_1 + \ell_2 = \ell$. Then
	\begin{equation}\label{eqn:1}
		\sum_{j_1+j_2 = j} N_{\ell_1,j_1} \times N_{\ell_2,j_2} = N_{\ell,j} + \sum_{j_1+j_2 = j-2} N_{\ell_1-2,j_1} \times  N_{\ell_2-2,j_2}.
	\end{equation}
	By induction, if $\ell_1$ is odd, then this formula yields
	\[	\sum_{j_1+j_2 = j} N_{\ell_1,j_1} \times N_{\ell_2,j_2}  = \sum_{m=0}^{(\ell_1-3)/2} N_{\ell-4m,j-2m} + \sum_{j_1+j_2 = j - \ell_1+1} N_{1,j_1} \times N_{\ell_2-\ell_1+1,j_2},\]
	and if $\ell_1$ is even, then
	\[	\sum_{j_1+j_2 = j} N_{\ell_1,j_1} \times N_{\ell_2,j_2}  = \sum_{m=0}^{(\ell_1-4)/2} N_{\ell-4m,j-2m} + \sum_{j_1+j_2 = j - \ell_1+2} N_{2,j_1} \times N_{\ell_2-\ell_1+2,j_2}.\]
\end{lemma}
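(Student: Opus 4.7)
The identity is purely combinatorial, so the plan is to give a bijective proof and then iterate. Recall that $N_{\ell,j}$ counts binary strings on $\Lambda_\ell$ with $j$ ones and no two consecutive zeros. The left-hand side of \eqref{eqn:1} counts pairs $(A,B)$ of ergodic configurations on $\Lambda_{\ell_1}$ and $\Lambda_{\ell_2}$ with $j_1+j_2=j$ ones. Concatenating gives a string $AB\in\{0,1\}^{\Lambda_\ell}$ with $j$ ones, and the only spot where $AB$ can fail ergodicity is the junction, namely when $A_{\ell_1}=B_1=0$. Thus pairs split into good ones (for which $AB$ is ergodic) and bad ones (violation at the junction). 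Good pairs are in obvious bijection with ergodic strings of length $\ell$ with $j$ ones: every such string restricts to ergodic pieces on the two blocks. This accounts for the $N_{\ell,j}$ term.

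For the bad pairs, ergodicity of $A$ and $B$ forces $A_{\ell_1-1}=1$ and $B_2=1$ (assuming $\ell_1,\ell_2\geq 2$), so $A$ ends with \texttt{10} and $B$ begins with \texttt{01}. The key step is then to define
\[
	\Phi:(A,B)\longmapsto (A',B'),
\]
where $A'$ is $A$ with its last two symbols removed and $B'$ is $B$ with its first two symbols removed. Then $A'$ and $B'$ are ergodic (restrictions of ergodic strings), have lengths $\ell_1-2$ and $\ell_2-2$, and carry $j_1-1$ and $j_2-1$ ones, so a total of $j-2$ ones. The inverse prepends \texttt{01} and appends \texttt{10}, which never creates a \texttt{00} pattern, so the inverse image is an arbitrary ergodic pair on $(\Lambda_{\ell_1-2},\Lambda_{\ell_2-2})$ with $j-2$ ones total. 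This bijection gives exactly the term $\sum_{j_1+j_2=j-2}N_{\ell_1-2,j_1}N_{\ell_2-2,j_2}$, completing the proof of \eqref{eqn:1}.

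The two iterated formulas follow by repeated application of \eqref{eqn:1} to the residual pair-sum on $(\ell_1-2,\ell_2-2)$. At each step one extracts a single $N_{\ell-4m,j-2m}$ term and shrinks both block lengths by $2$, so after $m$ steps one is left with a pair-sum indexed by $(\ell_1-2m,\ell_2-2m)$ with $j-2m$ ones. Iterating until $\ell_1-2m\in\{1,2\}$ yields the two stated formulas, the odd/even split arising simply from the parity of $\ell_1$, which decides whether the iteration terminates at block length $1$ or $2$.

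The only real subtlety is keeping the edge cases straight: one must verify that stripping \texttt{10} and \texttt{01} still produces a valid (possibly empty) ergodic configuration when $\ell_1=2$, and that the bijection consistently refuses to apply when $\ell_1=1$ (which is why iteration must terminate at $\ell_1-2m=1$ or $2$). Beyond that, the argument is a routine concatenation/splitting bijection and should not present any analytical difficulty.
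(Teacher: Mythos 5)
Your proof is correct and follows essentially the same route as the paper: interpret the left-hand side as counting configurations of length $\ell$ that are ergodic on each of the two blocks, split according to whether the junction carries the forbidden pattern $00$, and observe that in the bad case the neighbours of the junction are forced to be occupied, yielding a bijection with pairs on blocks of lengths $\ell_1-2$ and $\ell_2-2$ carrying $j-2$ particles. Your write-up is somewhat more explicit about the bijection, its inverse, and the termination of the iteration at block length $1$ or $2$, but the underlying argument is identical.
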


\begin{proof}
	Note that the left hand side of \eqref{eqn:1} is the number of configurations  $\eta \in \{0,1\}^{\Lambda_\ell}$ with $j$ particles, which are ergodic on $\{1,\dots \ell_1\}$ and on $\{\ell_1+1,\dots \ell\}$. Any such configuration is either ergodic, or such that $\eta_{\ell_1}=\eta_{\ell_1+1}=0$,  $\eta_{\ell_1-1}=\eta_{\ell_1+2}=1$, and is ergodic both on $\{1,\dots,\ell_1-2\}$ and on $\{\ell_1+3,\dots,\ell\}$, which yields \eqref{eqn:1}.
	\end{proof}

\begin{lemma}\label{lem:2}
	Fix $\rho \in [1/2,1]$. Then, there exists a constant $C=C(\rho)$ such that, for any integer $\ell > 0$ large enough,
	\begin{equation}\label{eqn:3}
		0 \leq 	\sum_{m=1}^{\ell - 1} \Big\{ \prod_{n=1}^{m} \frac{ 1-\rho+\tfrac{n}{\ell}}{ \rho+\tfrac{n}{\ell}} - \Big(\frac{1-\rho}{\rho}\Big)^m \Big\} \leq \frac{C (\log \ell)^2}{\ell}.
	\end{equation}
\end{lemma}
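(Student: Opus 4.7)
\medskip

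\noindent\textbf{Proof plan.} Throughout, abbreviate $r=(1-\rho)/\rho$ and $r_n=(1-\rho+n/\ell)/(\rho+n/\ell)$, so that the summand in the statement is $a_m-b_m$ with $a_m:=\prod_{n=1}^m r_n$ and $b_m:=r^m$. For the non-negativity (the left inequality), the key observation is that $\rho\geq 1-\rho$ is equivalent to $\rho(1-\rho+n/\ell)\geq (1-\rho)(\rho+n/\ell)$, i.e. $r_n\geq r$ for every $n\geq 1$; multiplying these inequalities yields $a_m\geq b_m$ for every $m$, hence the lower bound.

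For the upper bound, the plan is to use the telescoping identity
\[a_m-b_m=\sum_{k=1}^m r_1\cdots r_{k-1}\,(r_k-r)\,r^{m-k},\]
with the convention $r_1\cdots r_0=1$. Exchanging the order of summation in $m$ and $k$ and bounding $\sum_{m=k}^{\ell-1}r^{m-k}\leq 1/(1-r)=\rho/(2\rho-1)$, the claim reduces to controlling
\[S:=\sum_{k=1}^{\ell-1}a_{k-1}\,(r_k-r),\qquad\text{where}\qquad r_k-r=\frac{k(2\rho-1)}{\ell\,\rho(\rho+k/\ell)}=O\!\bigl(k/\ell\bigr).\]

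To bound $a_{k-1}$, I would split the range of $k$ at some threshold $k_0=k_0(\rho,\ell)$ of order $\sqrt{\ell}$ (or $\log\ell$, either suffices). For $k\leq k_0$, the elementary inequality $\log(1+x)\leq x$ applied to $\log(r_n/r)$ yields
\[\log\bigl(a_{k-1}/b_{k-1}\bigr)=\sum_{n=1}^{k-1}\log(r_n/r)\leq\sum_{n=1}^{k-1}(r_n/r-1)\leq C_\rho\,(k-1)^2/\ell,\]
so $a_{k-1}\leq C\,b_{k-1}=C\,r^{k-1}$ uniformly, and the contribution to $S$ is at most $C\ell^{-1}\sum_{k\geq 1}k\,r^{k-1}=O(\ell^{-1})$. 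For $k>k_0$, I use that for $1\leq n\leq\ell$ each factor is bounded by its largest value $r_\ell=(2-\rho)/(1+\rho)=:\tilde r<1$, which gives $a_{k-1}\leq \tilde r^{\,k-1}$; combined with $r_k-r\leq 2\rho-1$ this produces a tail of size $O(\tilde r^{k_0})$, which is smaller than $\ell^{-1}$ once $k_0$ is a sufficiently large multiple of $\log\ell$.

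The main obstacle is precisely the large-$k$ regime: the trivial bound $a_{k-1}\leq 1$ is useless, since it would produce an $O(\ell)$ contribution, and the small-$k$ estimate $a_{k-1}\leq C b_{k-1}$ breaks down once $k^2/\ell$ is no longer negligible. The resolution, the strict uniform bound $r_n\leq\tilde r<1$, is subtle because $r_n\to 1$ as $n\to\infty$; however, for $n$ restricted to $\{1,\dots,\ell\}$ there is a genuine gap $1-\tilde r>0$ depending only on $\rho$. Combining the two regimes yields $S=O(\ell^{-1})$ (in fact strictly stronger than the $(\log\ell)^2/\ell$ claimed), and the dependence of all constants on $\rho$ can be tracked explicitly through $1/(1-r)$ and $1/(1-\tilde r)$, which blow up only as $\rho\downarrow 1/2$.
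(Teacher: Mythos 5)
Your proof is correct for $\rho\in(1/2,1)$ and is, at its core, the same two-regime argument as the paper's: both split the index range at a threshold of order $\log\ell$, both use $\log(1+x)\leq x$ (equivalently a direct exponential estimate) to show the running product tracks $r^{m}$ in the near regime, and both exploit the uniform gap $r_n\leq\tilde r=(2-\rho)/(1+\rho)<1$ in the far regime (the paper encodes this as the coefficient $\rho(2-\rho)/((1-\rho)(1+\rho))=\tilde r/r$ in a linear recursion for $a_m=\prod_n (r_n/r)-1$, whereas you apply it directly to the factors). Your telescoping-plus-exchange-of-summation reformulation is a genuinely cleaner bookkeeping: it reduces everything to the single sum $S=\sum_k a_{k-1}(r_k-r)$ with $r_k-r=O(k/\ell)$, and, as you note, yields the sharper bound $O(1/\ell)$; the paper loses a $(\log\ell)^2$ factor by bounding $a_m$ uniformly over $m\leq M\log\ell$ instead of keeping the $m^2/\ell$ dependence. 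In fact, once you have $a_{k-1}\leq\tilde r^{\,k-1}$ and $r_k-r=O(k/\ell)$, the split at $k_0$ is superfluous: $\sum_k\tilde r^{\,k-1}k/\ell=O(1/\ell)$ outright. Two endpoint remarks should be added. At $\rho=1/2$ every summand vanishes (since $r_n=r=1$), so the statement is trivial and the divergence of $1/(1-r)$ is moot. At $\rho=1$ one has $r=0$, so $b_{k-1}=0$ for $k\geq 2$ and the small-$k$ comparison $a_{k-1}\leq C\,b_{k-1}$ fails; but there $\tilde r=1/2$ and the uniform bound $a_{k-1}\leq(1/2)^{k-1}$ covers all $k$, so only a one-line patch is needed. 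The paper dismisses both endpoints with the same brevity ("trivial" and "easier", respectively), so this is not a substantive gap.
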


\begin{proof}
	It is easy to see the term in \eqref{eqn:3} is non-negative. The result is trivial if $\rho = 1/2$. Now suppose $\rho \in (1/2,1)$.   Denote
	\[a_m = \prod_{n=1}^m \frac{1+\tfrac{n}{(1-\rho)\ell}}{1+\tfrac{n}{\rho\ell}} - 1.\]
	Fix $M=M(\rho)$ such that $M \log \tfrac{1+\rho}{2-\rho} > 1$. If $m \leq M \log \ell$, then
	\[a_m \leq \Big(\frac{1+\tfrac{M\log \ell}{(1-\rho)\ell}}{1+\tfrac{M \log \ell}{ \rho\ell}}  \Big)^{M \log \ell} - 1 \leq e^{C(\rho) (\log \ell)^2 / \ell} - 1 \leq \frac{C(\rho) (\log \ell)^2}{\ell}.\]
	Therefore,
	\[\sum_{m=1}^{\log \ell} \Big\{ \prod_{n=1}^{m} \frac{ 1-\rho+\tfrac{n}{\ell}}{ \rho+\tfrac{n}{\ell}} - \Big(\frac{1-\rho}{\rho}\Big)^m \Big\} =  \sum_{m=1}^{\log \ell}  a_m \Big(\frac{1-\rho}{\rho}\Big)^m \leq \frac{C(\rho) (\log \ell)^2}{\ell}.\]
	Observe that
	\[		a_{m+1} = \frac{1+\tfrac{m+1}{(1-\rho)\ell}}{1+\tfrac{m+1}{\rho\ell}}  (a_m+1) - 1 = \frac{1+\tfrac{m+1}{(1-\rho)\ell}}{1+\tfrac{m+1}{\rho\ell}} a_m + (m+1) \frac{\tfrac{1}{(1-\rho)\ell} - \tfrac{1}{\rho \ell}}{1+\tfrac{m+1}{\rho \ell}}.\]
	Since $m +1\leq \ell$,
	\[a_{m+1} \leq \frac{\rho (2-\rho)}{(1-\rho)(1+\rho)} a_m + \frac{C(\rho) m }{\ell}.\]	
	In particular,
	\[a_m \leq C(\rho) \Big[ \Big( \frac{\rho (2-\rho)}{(1-\rho)(1+\rho)}\Big)^m + \frac{m}{\ell} \Big].\]
	Since $2-\rho < 1+\rho$,
	\begin{multline*}
		\sum_{m=M \log \ell}^{\ell-1} \Big\{ \prod_{n=1}^{m} \frac{ 1-\rho+\tfrac{n}{\ell}}{ \rho+\tfrac{n}{\ell}} - \Big(\frac{1-\rho}{\rho}\Big)^m \Big\} = \sum_{m= M \log \ell}^{\ell-1}   a_m \Big(\frac{1-\rho}{\rho}\Big)^m\\
		\leq C(\rho) \sum_{m=M \log \ell}^{\ell-1}  \Big(\frac{1-\rho}{\rho}\Big)^m  \Big[ \Big( \frac{\rho (2-\rho)}{(1-\rho)(1+\rho)}\Big)^m + \frac{m}{\ell} \Big] \leq C(\rho) \Big( \big(\tfrac{2-\rho}{1+\rho}\big)^{M \log \ell} + \ell^{-1}\Big)
		\leq \frac{C(\rho)}{\ell}.
	\end{multline*}
	The case $\rho = 1$ is easier and could be proved in the same way. This concludes the proof.
\end{proof}

We are now ready to prove Proposition \ref{pro:equiv}.

\begin{proof}[Proof of Proposition \ref{pro:equiv}] Let $j_0 = \sum_{y \in B_k} \eta_y$ be the number of particles in $\eta$. Fix a number $j$ of particles, and boundary conditions $a\in\{0,1\}^2$.  Then,
	\[\pi^a_{\ell,j} (\eta_{|B_k(x)} = \sigma)  = \frac{\sum_{j_1+j_2 = j - j_0} {j_1+a_1+ \eta_{-k} - 1 \choose x-k+\ell - j_1} {j_2+a_2 + \eta_{k} - 1 \choose \ell -x-k- j_2} }{{j+a_1+a_2-1 \choose 2\ell+1-j}},\]
	and
	\[\pi_{\rho_\ell (j)} (\eta_{|B_k} = \sigma) = (1-\rho_\ell(j)) \Big(\frac{1-\rho_\ell(j)}{\rho_\ell(j)}\Big)^{2k - j_0} \Big(\frac{2\rho_\ell(j)-1}{\rho_\ell(j)}\Big)^{2j_0 - 2k - \eta_{-k} - \eta_{k}}.\]

	We only consider the case where $\ell + x -k +a_1 + \eta_{-k}-2$ is odd, and the other case can be treated in the same way. Without loss of generality, we assume $x < 0$.  By Lemma \ref{lem:1},
	\begin{multline*}
		\sum_{j_1+j_2= j - j_0} {j_1+a_1 + \eta_{-k} - 1 \choose x-k+\ell - j_1} {j_2+a_2 + \eta_{k} - 1 \choose \ell -x-k- j_2}\\
		 {= \sum_{j_1+j_2= j - j_0} N_{\ell+x-k+a_1+\eta_{-k}-2,j_1+a_1+\eta_{-k}-2} N_{\ell-x-k+a_2+\eta_k-2,j_2+a_2+\eta_k-2}}\\
		 {= \sum_{m=0}^{(x-k+\ell+a_1+\eta_{-k}-5)/2} N_{2\ell-2k+a_2+a_1
		 		+\eta_{k}+\eta_{-k}-4-4m,j-j_0+a_2+a_1
		 		+\eta_{k}+\eta_{-k}-4-2m} }\\
	 		{+ \sum_{j_1+j_2=j-j_0-x+k-\ell +a_2+\eta_{k} -1} N_{1,j_1} N_{-2x+a_2-a_1+\eta_k-\eta_{-k}+1,j_2}}\\
		= \sum_{m=0}^{(x-k+\ell+a_1+\eta_{-k}-5)/2}  {j-j_0+a_2+a_1
			+\eta_{k}+\eta_{-k}-3-2m \choose 2\ell - 2k -j+j_0-2m} \\+ {j-j_0-x+k-\ell +a_2+\eta_{k} \choose \ell - x-k-j+j_0+2-a_1-\eta_{-k}} + {j-j_0-x+k-\ell +a_2+\eta_{k} -1 \choose \ell - x-k-j+j_0+3-a_1-\eta_{-k}}.
	\end{multline*}
	Therefore,
	\[	\pi_{\ell,j}^a (\sigma_{|B_k(x)} = \eta)  = \sum_{m=0}^{(x-k+\ell+a_1+\eta_{-k}-5)/2}  \frac{{j-j_0+a_2+a_1
			+\eta_{k}+\eta_{-k}-3-2m \choose 2\ell - 2k -j+j_0-2m}}{{j+a_1 +a_2-1 \choose 2\ell+1-j}}  + \mathcal{F}_{\ell,j} (x,k,\eta),  \]
	where
	\begin{equation}\label{eqn:error1}
		\mathcal{F}_{\ell,j} (x,k,\eta) = \frac{ {j-j_0-x+k-\ell +a_2+\eta_{k} \choose \ell - x-k-j+j_0+2-a_1-\eta_{-k}} + {j-j_0-x+k-\ell +a_2+\eta_{k} -1 \choose \ell - x-k-j+j_0+3-a_1-\eta_{-k}}}{{j+a_1 +a_2-1 \choose 2\ell+1-j}}.
	\end{equation}
	In Lemma \ref{lem:3} below, we  prove that $| \mathcal{F}_{\ell,j} (x,k,\eta)| \leq C\ell^{-1}$. Observe that
	\begin{multline*}
		\frac{{j-j_0+a_2+a_1
				+\eta_{k}+\eta_{-k}-3-2m \choose 2\ell - 2k -j+j_0-2m}}{{j+a_1 +a_2-1 \choose 2\ell+1-j}} = \frac{(j-j_0+a_2+a_1
			+\eta_{k}+\eta_{-k}-3-2m)!}{(j+a_1 +a_2-1)!} \\\times \frac{(2\ell+1-j)!}{(2\ell - 2k -j+j_0-2m)!} \times  \frac{(2j-2\ell+a_1+a_2-2)!}{(2j-2\ell+2k-2j_0-3+a_2+a_1+\eta_{k}+\eta_{-k})!}
	\end{multline*}
	By Lemma \ref{lem:2},
	\begin{multline*}
		\sum_{m=0}^{(x-k+\ell+a_1+\eta_{-k}-5)/2} \Big| \frac{{j-j_0+a_2+a_1
				+\eta_{k}+\eta_{-k}-3-2m \choose 2\ell - 2k -j+j_0-2m}}{{j+a_1 +a_2-1 \choose 2\ell+1-j}} \\
		-  \Big(\frac{1}{\rho_\ell(j)}\Big)^{j_0+2-\eta_{k}-\eta_{-k}+2m} \Big(1-\rho_\ell(j)\Big)^{1+2k-j_0+2m} \Big( 2 \rho_\ell(j)-1 \Big)^{2j_0-2k+1-\eta_{k} - \eta_{-k}}  \Big| \leq \frac{C (\log \ell)^2}{\ell}.
	\end{multline*}
	Note also that
	\begin{multline*}
		\sum_{m=0}^{\infty} \Big(\frac{1}{\rho_\ell(j)}\Big)^{j_0+2-\eta_{k}-\eta_{-k}+2m} \Big(1-\rho_\ell(j)\Big)^{1+2k-j_0+2m} \Big( 2 \rho_\ell(j)-1 \Big)^{2j_0-2k+1-\eta_{k} - \eta_{-k}} \\
		= \Big(\frac{1}{\rho_\ell(j)}\Big)^{j_0-\eta_{k}-\eta_{-k}} \Big(1-\rho_\ell(j)\Big)^{1+2k-j_0} \Big( 2 \rho_\ell(j)-1 \Big)^{2j_0-2k-\eta_{k} - \eta_{-k}} = \pi_{\rho_\ell (j)} (\eta_{|B_k} = \sigma) .
	\end{multline*}
	Therefore, 
	\begin{multline*}
		\Big| 	\pi_{\ell,j}^a (\eta_{|B_k(x)} = \sigma)  - \pi_{\rho_\ell (j)} (\eta_{|B_k} = \sigma)\Big| \leq  \frac{C (\log \ell)^2}{\ell}  \\
		+ \sum_{m=\ell/4}^{\infty} \Big(\frac{1}{\rho_\ell(j)}\Big)^{j_0+2-\eta_{k}-\eta_{-k}+2m} \Big(1-\rho_\ell(j)\Big)^{1+2k-j_0+2m} \Big( 2 \rho_\ell(j)-1 \Big)^{2j_0-2k+1-\eta_{k} - \eta_{-k}}\\
		\leq  \frac{C (\log \ell)^2}{\ell}.
	\end{multline*}
	This concludes the proof.
\end{proof}

\begin{lemma}\label{lem:3}
	The error term defined in \eqref{eqn:error1} satisfies
	\[| \mathcal{F}_{\ell,j} (x,k,\eta)| \leq C\ell^{-1}.\]
\end{lemma}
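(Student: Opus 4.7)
The plan is to show that each of the two binomial ratios appearing in the numerator of $\mathcal{F}_{\ell,j}(x,k,\eta)$ is actually super-polynomially small in $\ell$, which is much stronger than the claimed $O(\ell^{-1})$. Write $C:=j+a_1+a_2-1$ and $D:=2\ell+1-j$ so that the denominator in \eqref{eqn:error1} is $\binom{C}{D}$, denote the two numerator binomials by $\binom{A_i}{B_i}$, $i\in\{1,2\}$, and set $s_i:=C-A_i$, $t_i:=D-B_i$. Under the WLOG assumption $x\leq 0$, direct computation gives
\[s_1=(\ell+x)+(j_0-k)+(a_1-1-\eta_k),\qquad t_1=(\ell+x)+(k-j_0)+(a_1+\eta_{-k}-1),\]
with $s_2=s_1+1$ and $t_2=t_1-1$. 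Using $x\in B_{\ell-(\log\ell)^2}$, i.e. $\ell+x\geq(\log\ell)^2$, and the fact that $j_0,k,a_1,a_2,\eta_{\pm k}$ are all bounded by a constant $C_\psi$ depending only on $\psi$, one gets for $\ell$ large enough
\[\min(s_i,t_i)\geq(\log\ell)^2-C_\psi,\qquad |s_i-t_i|\leq C_\psi.\]

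If $B_i<0$ or $A_i<B_i$ the corresponding binomial vanishes and there is nothing to prove, so we assume $0\leq B_i\leq A_i$. A routine cancellation based on $n!=\prod_{m=1}^n m$ gives the exact identity (for $s\geq t$; when $s<t$ a symmetric formula holds with the extra product moved to the denominator)
\[\frac{\binom{C-s}{D-t}}{\binom{C}{D}}=\prod_{i=0}^{t-1}\frac{D-i}{C-i}\cdot\prod_{i=0}^{s-t-1}\frac{C-D-i}{C-t-i}.\]
Since $D<C$, each factor of the first product satisfies $(D-i)/(C-i)\leq D/C$. Because $j\in E_\ell(\delta)$ forces $\rho_\ell(j)\in[(1+\delta)/2,1-\delta]$,
\[\frac{D}{C}=\frac{1-\rho_\ell(j)}{\rho_\ell(j)}+O(\ell^{-1})\leq\beta(\delta)+O(\ell^{-1}),\qquad\beta(\delta):=\frac{1-\delta}{1+\delta}<1.\]
The ``extra'' product has only $|s-t|=O(1)$ factors. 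In the case $s\geq t$ each such factor $(C-D-i)/(C-t-i)$ is $\leq 1$ since $t\leq D$; in the mirror case $s<t$, each factor is bounded by $D/(C-D+1)\leq(1-\delta)/(2\delta)+O(\ell^{-1})$, a constant depending only on $\delta$. So the extra product is $O(1)$.

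Combining these bounds,
\[\frac{\binom{A_i}{B_i}}{\binom{C}{D}}\leq C(\delta,\psi)\,\beta(\delta)^{(\log\ell)^2-C_\psi}=O(\ell^{-K})\]
for every $K>0$, hence in particular $O(\ell^{-1})$, and summing the two contributions yields the lemma. The main subtle point is not any single step of the estimates but the simultaneous use of the two constraints $|x|\leq\ell-(\log\ell)^2$ and $j\in E_\ell(\delta)$: the former yields the exponent $\min(s_i,t_i)\geq(\log\ell)^2-O(1)$ in the main product, while the latter keeps the base ratio $D/C$ strictly below $1$ uniformly in $j$; establishing the product formula and bounding its factors are routine once these constraints are in hand.
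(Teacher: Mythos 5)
Your proof is correct and follows essentially the same route as the paper: both expand the ratio of binomial coefficients into a product of roughly $\ell+x\geq(\log\ell)^2$ factors, each bounded by $\tfrac{1-\rho_\ell(j)}{\rho_\ell(j)}+O(\ell^{-1})<1$ uniformly thanks to $j\in E_\ell(\delta)$, which yields the bound $C\big(\tfrac{1-\rho_\ell(j)}{\rho_\ell(j)}\big)^{\ell+x}=O(\ell^{-K})$ for every $K$. You merely carry out the telescoping product and the $O(1)$ leftover factors more explicitly than the paper does (and the constant you call $C_\psi$ should really depend on the fixed $k$ and local configuration $\sigma$ of Proposition \ref{pro:equiv}, not on $\psi$ — a harmless notational slip).
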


\begin{proof}
	We only prove 
	\[\frac{ {j-j_0-x+k-\ell +a_2+\eta_{k} \choose \ell - x-k-j+j_0+2-a_1-\eta_{-k}} }{{j+a_1 +a_2-1 \choose 2\ell+1-j}} \leq C\ell^{-1},\]
	since the remaining term could be treated in the same way. Developing the above factorial, we rewrite the left term as
	\begin{multline*}
		\frac{(j-j_0-x+k-\ell +a_2+\eta_{k})!}{(j+a_1 +a_2-1)!}  \times  \frac{(2\ell+1-j)!}{( \ell - x-k-j+j_0+2-a_1-\eta_{-k})!}\\ \times  \frac{(2j-2\ell+a_1+a_2-2)!}{(2j-2\ell-2+2k-2j_0+\eta_{-k}+\eta_{k}+a_1+a_2)!}.
	\end{multline*}
	There exists some constant $C$ such that the above term is bounded by
	\[C \Big(\frac{1-\rho_\ell(j)}{\rho_\ell(j)}\Big)^{\ell+x}.\]
	Since $-\ell+(\log \ell)^2 \leq x \leq 0$ and $\rho_\ell (j) > (1+\delta)/2 $, it is easy to see the last line is bounded by $C \ell^{-1}$. This concludes the proof.
\end{proof}

\appendix 
\section{Stationary distributions}\label{sec:stationaryDist}
\subsection{Proof of \eqref{eq:mappingdistributions}}
\label{sec:statPi}
In this section, we give a more straightforward interpretation of the stationary measure $\pi_\rho$, and prove \eqref{eq:mappingdistributions}. First note that \eqref{eq:mappingdistributions} yields the following construction for $\pi_\rho$: we first build the central cluster, by sampling $\omega_0$ according to \eqref{eq:centralcluster}, and then randomly translating it by choosing $X_0$ uniformly in $\{-\omega_0-1,\dots,0\}$. We then define $X_1=X_0+\omega_0+2\geq 1$,
\[\eta_{X_0}=\eta_{X_1}=0\]
and
\[\eta_{x}=1\qquad \mbox{ for } \qquad X_0< x<X_1.\]
Then, both $(\eta_{X_0-x})_{x\geq 0}$ and $(\eta_{X_1+x})_{x\geq 0}$ are Markov chains on $\{0,1\}$ with the same distribution, and with transition probabilities for $x\geq 0$
\begin{equation}
\label{eq:transitionrates1}
\P(\widetilde{\eta}_{x+1}=1\mid \widetilde{\eta}_x=1)=a(\rho), \qquad \P(\widetilde{\eta}_{x+1}=0\mid \widetilde{\eta}_x=1)=1-a(\rho),
\end{equation}
where $a(\rho)$ is the active density defined in \eqref{eq:arho} whereas 
\begin{equation}
\label{eq:transitionrates2}
\P(\widetilde{\eta}_{x+1}=1\mid \widetilde{\eta}_x=0)=1.
\end{equation}

\bigskip

We now prove \eqref{eq:mappingdistributions}. Denote by $\widetilde{\pi}_\rho$ the distribution following the construction above, we want to prove that $\widetilde{\pi}_\rho=\pi_\rho$, where $\pi_\rho$ is given by \eqref{pirho2}. We first show that $\widetilde{\pi}_\rho$ is translation invariant. To do so, fix a local configuration $\sigma$ on $\nota{\Lambda}=\{-\ell, \dots, \ell\}$, we can safely assume that there is at least an empty site to the left of the origin, and two empty sites to the right of the origin.  Otherwise, we can simply derive the value of $\widetilde{\pi}_\rho(\eta_{\vert  \Lambda}=\sigma)$ by  extending $\sigma$ on a larger set $\Lambda$. As a consequence of this assumption, the origin and site $1$'s clusters are both fully contained in $ \Lambda,$ where we call a \emph{particle cluster} an empty site followed by all its consecutive particles.  Two cases can arise: either $\sigma_1=1$, in which case the origin and site $1$ are in the same cluster. Then, since the position of the origin in the cluster is chosen uniformly, we have by construction
\[\widetilde{\pi}_\rho(\eta_{\vert  \Lambda}=\sigma)=\widetilde{\pi}_\rho(\eta_{\vert \tau_{-1} \Lambda}=\sigma_{\cdot+1}),\]
where we defined $\tau_{-1}\Lambda=\{-1-\ell, \ell-1\}$. The other possibility is that $\sigma_1=0$, in which case by translating the event $\{\eta_{\vert  \Lambda}=\sigma\}$  by $-1$, we change the origin's cluster. For this reason, we need to compute explicitly the probability of obtaining $\sigma$. In $\sigma$, denote by $X_0:=-\omega^\sigma_0-2$ the position of the first empty site left of the origin, $X_1=1$ by assumption is the position of the first empty site right of the origin, and $X_2=\omega^\sigma_1+3$ the position of the second empty site to the right of the origin. By construction, to obtain $\sigma$ under $\widetilde{\mu}_\rho$, we need 
\begin{enumerate}[1)]
\item To choose the right size for the central cluster (the one of the origin), which occurs with probability 
\[p_1:=(\omega^\sigma_0+2) a(\rho)^{\omega^\sigma_0}\rho(1-a(\rho))\] 
given by \eqref{eq:centralcluster}. 
\item To choose the right position for the central cluster, which occurs with probability $p_2:= 1/(\omega^\sigma_0+2)$.
\item To build  the next cluster right of the origin according to the Markovian construction, which then occurs with geometric probability 
\[p_3:=\mu_\rho(\omega_1=\omega^\sigma_1)=a(\rho)^{\omega^\sigma_1}(1-a(\rho))\] 
given by \eqref{eq:Defmurho}. 
\item To build, by the Markovian construction, the rest of the configuration $\sigma$ outside of those two clusters, which occurs with probability denote $p_4:=p_4(\sigma)$.
\end{enumerate}
With these notations, we can now write
\begin{equation}
\label{eq:murhosigma}
\widetilde{\pi}_\rho(\eta_{\vert  \Lambda}=\sigma)=p_1 p_2 p_3p_4.
\end{equation}

Similarly,  to obtain $\sigma_{\cdot +1}$ in the translated box $\tau_{-1} \Lambda$, we need 
\begin{enumerate}[1)]
\item To choose the right size for the central cluster, which has now changed because of the translation, this occurs with probability 
\[q_1:=(\omega^\sigma_1+2) a(\rho)^{\omega^\sigma_1}\rho(1-a(\rho))\] 
\item To choose the right position for the central cluster, which occurs with probability $q_2:= 1/(\omega^\sigma_1+2)$.
\item To build the cluster left of the origin according to the Markovian construction, which now occurs with probability 
\[q_3:=\mu_\rho(\omega_{-1}=\omega^\sigma_0)=a(\rho)^{\omega^\sigma_0}(1-a(\rho)).\] 
given by \eqref{eq:Defmurho}. 
\item To build by the Markovian construction the rest of the configuration $\sigma$ outside of those two clusters, which occurs with probability denote $q_4:=p_4(\sigma)$
\end{enumerate}
We obtain 
\begin{equation}
\label{eq:murhosigma2}
\widetilde{\pi}_\rho(\eta_{\vert \tau_{-1} \Lambda}=\sigma_{\cdot+1})=q_1 q_2 q_3q_4=\widetilde{\pi}_\rho(\eta_{\vert  \Lambda}=\sigma),
\end{equation}
which proves that $\widetilde{\pi}_\rho$ is translation invariant since $\Lambda$ is of arbitrary size.

\bigskip

By translation invariance, the average density $\E_{\widetilde{\pi}_\rho}(\eta_0)$ is the average density of an arbitrary cluster, which is 
\begin{equation}
\label{eq:Epitilde}
\E_{\widetilde{\pi}_\rho}(\eta_0)=\frac{1+\frac{a(\rho)}{1-a(\rho)}}{2+\frac{a(\rho)}{1-a(\rho)}}=\frac{1}{2-a(\rho)}=\rho.
\end{equation}
We now prove that $\widetilde{\pi}_\rho$ also satisfies the explicit formula \eqref{pirho2} for $\pi_\rho$. To do so, we are going to use the translation invariance of $\widetilde{\pi}_\rho$, and the fact that very far from the origin, the central cluster has no influence, and the configuration's construction is Markovian. For this purpose, choose $x\in \Z$, an ergodic  configuration $\sigma\in \mathcal{E}_{\Lambda}$ on $\nota{\Lambda}:=\Lambda_\ell(x)=\{x+1,\dots,x+\ell\}$, and denote by 
\begin{align*}
n_{\circ\bullet}&:=card\{z\in \Lambda \setminus\{x+\ell\}, \; \sigma_z=0, \;\sigma_{z+1}=1\},\\
n_{\bullet \circ}&:=card\{z\in \Lambda \setminus\{x+\ell\}, \; \sigma_z=1, \; \sigma_{z+1}=0\},\\
n_{\bullet\bullet}&:=card\{z\in \Lambda \setminus\{x+\ell\}, \;  \sigma_z=\sigma_{z+1}=1\}.\\
\end{align*}
Since $\sigma$ was assumed ergodic, $ n_{\circ\bullet}+n_{\bullet \circ}+n_{\bullet \bullet}=\ell-1$. Furthermore, denoting by $p=\sum_{z\in \Lambda}\sigma_z$ the number of particles in $\sigma$, $\ell-p$ is its number of empty sites, therefore
\[n_{\circ \bullet}=\ell-p-(1-\sigma_{x+\ell}) \qquad  \mbox{ and }\qquad n_{\bullet\circ}=\ell-p-(1-\sigma_{x+1}).\]
In other words, the number of $\circ \bullet$ is $\ell-p$ except if the last site is empty, in which case it is $\ell-p-1$, and similarly for $\bullet\circ$. From those identities, we finally obtain
\[n_{\bullet \bullet}=2p-\ell +1-\sigma_{x+1}-\sigma_{x+\ell}.\]

Fix now $x\gg1$, unless the central cluster reaches $x$, which occurs with probability exponentially small in $x$, we can use the markovian construction of $\widetilde{\pi}_\rho$. Since $\widetilde{\pi}_\rho(\eta_{x+1}=\sigma_{x+1})=\rho^{\sigma_{x+1}}(1-\rho)^{1-\sigma_{x+1}}$ according to \eqref{eq:Epitilde} and translation invariance, we obtain using the transition rates \eqref{eq:transitionrates1}, \eqref{eq:transitionrates2}
\begin{align*}
\widetilde{\pi}_\rho(\eta_{\mid \Lambda}=\sigma)&=\widetilde{\pi}_\rho(\eta_{\mid \Lambda\setminus\{x+1\}}=\sigma_{\mid \Lambda\setminus\{x+1\}})\widetilde{\pi}_\rho(\eta_{x+1}=\sigma_{x+1})+O(e^{-cx})\\
&=1^{n_{\circ \bullet}}(1-a(\rho))^{n_{\bullet \circ}}a(\rho)^{n_{\bullet \bullet}}\rho^{\sigma_{x+1}}(1-\rho)^{1-\sigma_{x+1}}+O(e^{-cx}).
\end{align*}
Letting $x\to\infty$ and using $\widetilde{\pi}_\rho$'s translation invariance, we recover \eqref{pirho2} and obtain as wanted that $\widetilde{\pi}_\rho=\pi_\rho$.

Thanks to   \eqref{eq:mappingdistributions}, we also have the following result.
\begin{corollary}
\label{cor:Markov}
Under $\pi_\rho$, both $(\eta_x)_{x\geq 0}$ and $(\eta_{-x})_{x\geq 0}$ are distributed as homogeneous Markov chains, started from $\eta_0\sim Bernoulli(\rho)$, and with transition probabilities given by \eqref{eq:transitionrates1} and \eqref{eq:transitionrates2}. However, these two Markov chains are \emph{not} independant.
\end{corollary}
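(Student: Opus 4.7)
The plan is to verify that the finite-dimensional distributions of $(\eta_x)_{x\geq 0}$ under $\pi_\rho$ match those of the announced Markov chain, by directly comparing them with the explicit formula \eqref{pirho2}, and then to appeal to reflection symmetry for the other chain. The argument is a straightforward bookkeeping exercise once the transition counts have been written down.

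First I would fix $\ell\geq 1$ and an ergodic $\sigma\in\mathcal{E}_\ell$, and use \eqref{pirho2} to express $\pi_\rho(\eta_{|\Lambda_\ell}=\sigma)$. Letting $p=\sum_{x=1}^\ell\sigma_x$ and writing $n_{ij}$ for the number of transitions $(\sigma_x,\sigma_{x+1})=(i,j)$ with $x\in\{1,\dots,\ell-1\}$, elementary counting in the ergodic case (where $n_{00}=0$) gives
\[
n_{10}=\ell-p-1+\sigma_1,\qquad n_{01}=\ell-p-1+\sigma_\ell,\qquad n_{11}=2p-\ell+1-\sigma_1-\sigma_\ell.
\]
On the other hand, the Markov chain with initial law $\mathrm{Bernoulli}(\rho)$ and transitions \eqref{eq:transitionrates1}--\eqref{eq:transitionrates2} assigns to $\sigma$ the probability
\[
\rho^{\sigma_1}(1-\rho)^{1-\sigma_1}\Big(\tfrac{1-\rho}{\rho}\Big)^{n_{10}}\Big(\tfrac{2\rho-1}{\rho}\Big)^{n_{11}},
\]
since $1-a(\rho)=(1-\rho)/\rho$ and $a(\rho)=(2\rho-1)/\rho$. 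A one-line substitution, using the elementary identity $\rho^{\sigma_1}(1-\rho)^{1-\sigma_1}\bigl(\tfrac{1-\rho}{\rho}\bigr)^{\sigma_1}=1-\rho$ (valid for both $\sigma_1=0$ and $\sigma_1=1$), shows that this coincides with \eqref{pirho2}. For non-ergodic $\sigma$, both expressions vanish, consistently with the Markov transition $p(0,0)=0$ dictated by \eqref{eq:transitionrates2}. Kolmogorov consistency then identifies the law of $(\eta_x)_{x\geq 1}$ under $\pi_\rho$ with that of the Markov chain, and translation invariance of $\pi_\rho$ (cf. Section \ref{sec:statPi}) shifts this to $(\eta_x)_{x\geq 0}$.

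Second, I would notice that the right-hand side of \eqref{pirho2} is invariant under the reflection $\sigma_x\mapsto\sigma_{\ell+1-x}$, since it depends on $\sigma$ only through $p$, $\sigma_1+\sigma_\ell$, and the ergodicity constraint, all of which are reflection-invariant. Combined with translation invariance, this proves reflection symmetry of $\pi_\rho$, so that $(\eta_{-x})_{x\geq 0}$ has the same law as $(\eta_x)_{x\geq 0}$ under $\pi_\rho$, and the previous step applies. Finally, non-independence is automatic: the two chains share the common random variable $\eta_0$, whose variance $\rho(1-\rho)$ is strictly positive for $\rho\in(1/2,1)$.

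There is no real obstacle here, only careful bookkeeping: one must keep track of the boundary corrections $\sigma_1,\sigma_\ell$ in the transition counts $n_{10},n_{01}$ and match exponents with \eqref{pirho2}. The only subtle conceptual point is the check that the initial distribution of the chain at $\eta_0$ is $\mathrm{Bernoulli}(\rho)$, which drops out of the identity $\rho^{\sigma_1}(1-\rho)^{1-\sigma_1}\bigl(\tfrac{1-\rho}{\rho}\bigr)^{\sigma_1}=1-\rho$ used above and is consistent with $\mathrm{Bernoulli}(\rho)$ being the stationary distribution of the chain with transition matrix \eqref{eq:transitionrates1}--\eqref{eq:transitionrates2}.
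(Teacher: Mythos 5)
Your proposal is correct, and it takes a genuinely different route from the paper. The paper's proof is soft: it uses the constructive description $\widetilde{\pi}_\rho=\pi_\rho$ from Section \ref{sec:statPi} (i.i.d.\ geometric particle clusters around a tilted central cluster), observes that the law of $(\eta_{x_0+x})_{x\geq 0}$ converges as $x_0\to\infty$ to the announced Markov chain because the influence of the central cluster vanishes, and then uses translation invariance to conclude that this limiting law is in fact the law for every $x_0$. You instead verify the finite-dimensional distributions directly against the explicit formula \eqref{pirho2}: your transition counts $n_{10}=\ell-p-1+\sigma_1$, $n_{01}=\ell-p-1+\sigma_\ell$, $n_{11}=2p-\ell+1-\sigma_1-\sigma_\ell$ agree with those computed in the appendix, and the identity $\rho^{\sigma_1}(1-\rho)^{1-\sigma_1}\big(\tfrac{1-\rho}{\rho}\big)^{\sigma_1}=1-\rho$ does make the Markov-chain path probability collapse exactly onto \eqref{pirho2}, with the indicator $\mathbf{1}_{\{\sigma\in\mathcal{E}_\ell\}}$ matching $p(0,0)=0$. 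Your treatment of the left chain via reflection invariance of \eqref{pirho2} (it depends on $\sigma$ only through $p$, $\sigma_1+\sigma_\ell$ and ergodicity) is also a clean alternative to the paper's symmetric construction of the two half-chains. What each approach buys: yours is self-contained modulo \eqref{pirho2} and translation invariance, avoids the limiting argument, and makes the Bernoulli$(\rho)$ initial law an explicit computation rather than a consequence of $\E_{\pi_\rho}(\eta_0)=\rho$; the paper's is shorter given that the cluster construction has already been built and validated in the appendix. On the final claim, your observation that the two chains share the non-degenerate variable $\eta_0$ does establish the literal statement of non-independence (the paper offers no argument for it at all), though it says nothing about the stronger and more interesting fact that the two sides remain correlated conditionally on $\eta_0$.
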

\begin{proof}
For any $x_0\in \Z$, by construction, $\bb{P}(\eta_{x_0}=1)=\rho$, and the distribution of $(\eta_{x_0+x})_{x\geq 0}$ converges as $x_0\to\infty$ to that of  a homogeneous Markov chain with transition probabilities given by \eqref{eq:transitionrates1} and \eqref{eq:transitionrates2}, because the influence of the initial cluster's construction vanishes. But by translation invariance, this distribution does not depend on $x_0$, which proves the corollary.
\end{proof}

\subsection{Compressibility}
\label{sec:comp}
We prove here for the sake of completeness identity \eqref{eq:Defchirho} for the equilibrium compressibility for the FEP, namely
\begin{equation*}
\chi(\rho)=\sum_{x\in \Z}Cov_{\pi_\rho}(\eta_0,\eta_x)=\rho(1-\rho)(2\rho-1).
\end{equation*}
To prove it, it is convenient to consider the construction $\widetilde{\pi}_\rho$ for $\pi_\rho$ obtained in Section \ref{sec:statPi} rather than \eqref{pirho2}. We start by writing by translation invariance of $\pi_\rho$ that
\begin{equation}
\label{eq:chirho2}
\chi(\rho)=\lim_{y\to\infty} \sum_{x\in \Z}Cov_{\pi_\rho}(\eta_y,\eta_{x+y})=\rho(1-\rho)+ 2\lim_{y\to\infty} \sum_{x\geq 1}Cov_{\pi_\rho}(\eta_y,\eta_{x+y}).
\end{equation}
Because we send $y$ to $\infty$, we do not need to take into account the central cluster, so that the distribution of $(\eta_{x+y})_{x\geq 0}$, in the limit $y\to\infty$ is that of a Markov chain with transition probabilities given by \eqref{eq:transitionrates1} and \eqref{eq:transitionrates2}. Consider therefore the distribution $\nu_\rho$  of a markov chain $(\eta_{x})_{x\geq 0}$ with transition probabilities given by \eqref{eq:transitionrates1} and \eqref{eq:transitionrates2}.

We now define 
\[g_x=\nu_\rho(\eta_0\eta_x=1),\qquad \mbox{ and }\qquad h_x=g_x-\rho^2=Cov_{\nu_\rho}(\eta_0,\eta_x),\]
and straightforward computation yield $h_1=\rho(a(\rho)-\rho)=-(1-\rho)^2$. We then write 
\[g_{x+1}=\nu_\rho(\eta_0\eta_x\eta_{x+1}=1)+\nu_\rho(\eta_0 (1-\eta_x)\eta_{x+1}=1).\]
Because $\nu_\rho$ only charges the ergodic component, the second term in the right-hand side is equal to $\nu_\rho(\eta_0 (1-\eta_x)=1)=\rho-g_x$, whereas the first is $\nu_\rho(\eta_0\eta_x\eta_{x+1}=1)=a(\rho)g_x.$ This  straightforwardly yields 
\[h_{x+1}=-\frac{1-\rho}{\rho}h_x \qquad \Longrightarrow \qquad h_x=\rho(1-\rho) \pa{\frac{\rho-1}{\rho}}^x,\]
so that using \eqref{eq:chirho2}, we obtain
\begin{equation*}
\chi(\rho)=\rho(1-\rho)+2\sum_{x\geq 1}h_x=\rho(1-\rho)(2\rho-1)
\end{equation*}
as wanted.

\subsection{Stationary states and mapping}
\label{sec:SSmapping}
In this section, we comment on why the mapped distribution $\widehat{\mu}_\rho(\omega=\cdot)$ defined in Section \ref{sec:SSZR} is \emph{not} a stationary state for the zero-range generator \eqref{eq:Lzr}. The reason is straightforward, because we need to consider instead the generator $\genzrh$ on the mapped pair $(\omega, X_0)$. More precisely, given a FEP trajectory $\eta=\{\eta(t),\;t\geq 0\}$ the process $(\omega, X_0)(t):=\Pi(\eta(t))$ defined by the mapping $\Pi$ defined in Section \ref{sec:staticmapping} is a Markov process with generator  $\genzrh $ defined on functions $f$ on $\widehat{\mathcal{E}}_\Z$ (cf. \ref{eq:EZhat}) as
\begin{align*}
\genzrh f (\omega,x) &= \genzrstar f (\cdot,x)(\omega)\\
&+\mathbf{1}_{\{\omega_{-1} \geq 1\}} p_N  \Big\{f(\omega^{-1,0},x-1)  - f(\omega,x) \Big\} +\mathbf{1}_{\{\omega_1 \geq 1\}} q_N  \Big\{f(\omega^{1,0},x)- f(\omega,x) \Big\}\\
&+\mathbf{1}_{\{\omega_0 \geq 1\}} p_N  \Big\{f(\tau_{-1}(\omega^{0,1}),0)\mathbf{1}_{\{ x=-\omega_0-1\}}  +f(\omega^{0,1},x)\mathbf{1}_{\{ x>-\omega_0-1\}}- f(\omega,x) \Big\}\\
&+\mathbf{1}_{\{\omega_0 \geq 1\}} q_N  \Big\{f(\tau_{1}(\omega^{0,-1}),-\omega_{-1}-2)\mathbf{1}_{\{ x=0\}}  +f(\omega^{0,-1},x+1)\mathbf{1}_{\{ x<0\}}- f(\omega,x) \Big\}
\end{align*}
where $\genzrstar$ is the zero-range generator defined in \eqref{eq:Lzr}, except that all jumps to and from the origin are suppressed, and $(\tau_x\omega)_{y}=\omega_{y-x}$ is the  configuration translated by $x$. Note that the contribution of $\genzrstar$ does not affect the position of $x$, since the tagged empty site in the exclusion configuration only changes when an $\omega$-particle jumps over the edge $(-1,0)$, or when an $\omega$-particle jumps from site $0$ to site $1$ and an empty site is at the origin in $\eta$.

By applying the mapping back and forth, it is then straightforward to show that, for any function $f(\omega)$ of the zero-range configuration, 
\[\E_{\widehat{\mu}_\rho}(\genzrh f)=0,\]
so that the distribution $\widehat{\mu}_\rho$ is invariant w.r.t. the generator $\genzrh $ on  $\widehat{\mathcal{E}}_\Z$.
However, projected on $\omega$, $\genzrh$ is not the generator of the constant rate zero-range process, but rather the modified generator
\begin{multline*}
\genzrh f (\omega) = \genzr f (\omega)\\
+\mathbf{1}_{\{\omega_0 \geq 1,\; x=-\omega_0-1\}} p_N  \Big\{f(\tau_{-1}(\omega^{0,1}))\mathbf{1}_{\{ x=-\omega_0-1\}}  -f(\omega^{0,1}) \Big\}+\mathbf{1}_{\{\omega_0 \geq 1, \; x=0\}} q_N  \Big\{f(\tau_{1}(\omega^{0,-1}))-f(\omega^{0,-1})\Big\},
\end{multline*}
where the two last terms account for the configuration translation when the origin's cluster changes because an empty site jumped over the exclusion edge $(0,1)$. Note that obviously, the generator above is not a Markov generator on the set of zero-range configurations $\N^\Z$, since it is defined in parts by the external variable $x$. This identity shows why the mapped stationary distribution $\widehat{\mu}_\rho$ is \emph{not} invariant w.r.t. the generator of the constant rate zero-range process.

\bibliographystyle{plain}
\bibliography{reference.bib}

\begin{thebibliography}{10}

\bibitem{Ayyer2020StationarySO}
A.~Ayyer, S.~Goldstein, J.~L. Lebowitz, and E.~R. Speer.
\newblock Stationary states of the one-dimensional facilitated asymmetric
  exclusion process.
\newblock {\em arXiv preprint arXiv: 2010.07257}, 2020.

\bibitem{BaikBarraquandCorwinToufic16}
J.~Baik, G.~Barraquand, I.~Corwin, and T.~Suidan.
\newblock Facilitated exclusion process.
\newblock In {\em The Abel Symposium}, pages 1--35. Springer, 2016.

\bibitem{barraquand2023weakly}
G.~Barraquand, O.~Blondel, and M.~Simon.
\newblock Weakly asymmetric facilitated exclusion process.
\newblock {\em arXiv preprint arXiv:2301.04689}, 2023.

\bibitem{BasuMohanty09}
U.~Basu and P.~K. Mohanty.
\newblock Active-absorbing-state phase transition beyond directed percolation:
  A class of exactly solvable models.
\newblock {\em Physical Review E}, 79(4):041143, 2009.

\bibitem{blondel2020hydrodynamic}
O.~Blondel, C.~Erignoux, M.~Sasada, and M.~Simon.
\newblock Hydrodynamic limit for a facilitated exclusion process.
\newblock In {\em Annales de l'Institut Henri Poincar{\'e}, Probabilit{\'e}s et
  Statistiques}, volume~56, pages 667--714. Institut Henri Poincar{\'e}, 2020.

\bibitem{blondel2021stefan}
O.~Blondel, C.~Erignoux, and M.~Simon.
\newblock Stefan problem for a nonergodic facilitated exclusion process.
\newblock {\em Probability and Mathematical Physics}, 2(1):127--178, 2021.

\bibitem{Chen2019}
D.~Chen and L.~Zhao.
\newblock The invariant measures and the limiting behaviors of the facilitated
  {TASEP}.
\newblock {\em Statistics $\&$ Probability Letters}, 154:108557, 2019.

\bibitem{oliveira}
M.~de~Oliveira.
\newblock Conserved lattice gas model with infinitely many absorbing states in
  one dimension.
\newblock {\em Phys. Rev. E}, 71:016112, 01 2005.

\bibitem{diehl2017kardar}
J.~Diehl, M.~Gubinelli, and N.~Perkowski.
\newblock The {Kardar--Parisi--Zhang} equation as scaling limit of weakly
  asymmetric interacting brownian motions.
\newblock {\em Communications in Mathematical Physics}, 354:549--589, 2017.

\bibitem{erignoux2022mapping}
C.~Erignoux, M.~Simon, and L.~Zhao.
\newblock Mapping hydrodynamics for the facilitated exclusion and zero-range
  processes.
\newblock {\em arXiv preprint arXiv:2202.04469}, 2022.

\bibitem{ferrari1988non}
P.~A. Ferrari, E.~Presutti, and M.~E. Vares.
\newblock Non equilibrium fluctuations for a zero range process.
\newblock In {\em Annales de l'IHP Probabilit{\'e}s et statistiques},
  volume~24, pages 237--268, 1988.

\bibitem{FunakiSasada}
T.~Funaki and M.~Sasada.
\newblock Hydrodynamic limit for an evolutional model of two-dimensional young
  diagrams.
\newblock {\em Communications in Mathematical Physics}, 299:335--363, 2009.

\bibitem{GabelKrapivskyRedner10}
A.~Gabel, P.~L. Krapivsky, and S.~Redner.
\newblock Facilitated asymmetric exclusion.
\newblock {\em Physical review letters}, 105(21):210603, 2010.

\bibitem{2019JSMTE}
S.~{Goldstein}, J.~L. {Lebowitz}, and E.~R. {Speer}.
\newblock {Exact solution of the facilitated totally asymmetric simple
  exclusion process}.
\newblock {\em Journal of Statistical Mechanics: Theory and Experiment},
  12(12):123202, December 2019.

\bibitem{Goldstein2020}
S.~Goldstein, J.~L. Lebowitz, and E.~R. Speer.
\newblock The discrete-time facilitated totally asymmetric simple exclusion
  process.
\newblock {\em Pure and Applied Functional Analysis}, 6(1):177--203, 2021.

\bibitem{Goldstein2022}
S.~Goldstein, J.~L. Lebowitz, and E.~R. Speer.
\newblock Stationary states of the one-dimensional discrete-time facilitated
  symmetric exclusion process.
\newblock {\em arXiv preprint arXiv: 2201.05175}, 2022.

\bibitem{gonccalves2010equilibrium}
P.~Gon{\c{c}}alves.
\newblock Equilibrium fluctuations for the totally asymmetric zero-range
  process.
\newblock {\em Journal of Statistical Physics}, 138:645--661, 2010.

\bibitem{gonccalves2014nonlinear}
P.~Gon{\c{c}}alves and M.~Jara.
\newblock Nonlinear fluctuations of weakly asymmetric interacting particle
  systems.
\newblock {\em Archive for Rational Mechanics and Analysis}, 212(2):597--644,
  2014.

\bibitem{gonccalves2015stochastic}
P.~Gon{\c{c}}alves, M.~Jara, and S.~Sethuraman.
\newblock A stochastic {Burgers} equation from a class of microscopic
  interactions.
\newblock {\em The Annals of Probability}, 43(1):286--338, 2015.

\bibitem{katz1960exponential}
M.~Katz and A.~J. Thomasian.
\newblock An exponential bound for functions of a markov chain.
\newblock {\em The Annals of Mathematical Statistics}, 31(2):470--474, 1960.

\bibitem{Kipnis86}
C.~Kipnis.
\newblock Central limit theorems for infinite series of queues and applications
  to simple exclusion.
\newblock {\em The Annals of Probability}, 14(2):397--408, 1986.

\bibitem{klscaling}
C.~Kipnis and C.~Landim.
\newblock {\em Scaling limits of interacting particle systems}, volume 320.
\newblock Springer Science \& Business Media, 2013.

\bibitem{lubeck}
S.~Lubeck.
\newblock Scaling behavior of the absorbing phase transition in a conserved
  lattice gas around the upper critical dimension.
\newblock {\em Physical review. E, Statistical physics, plasmas, fluids, and
  related interdisciplinary topics}, 64, 04 2001.

\bibitem{RossiPastorVespignani00}
M.~Rossi, R.~Pastor-Satorras, and A.~Vespignani.
\newblock Universality class of absorbing phase transitions with a conserved
  field.
\newblock {\em Physical review letters}, 85(9):1803, 2000.

\bibitem{zhao2023stationary}
L.~Zhao.
\newblock Stationary fluctuations for a multi-species zero range process with
  long jumps.
\newblock {\em arXiv preprint arXiv:2303.09110}, 2023.

\end{thebibliography}

\end{document}